\documentclass[a4paper,11pt]{article}
\usepackage[utf8]{inputenc}
\usepackage{tikz}
\usepackage{amsthm}
\usepackage{amsmath,stmaryrd, mathrsfs}
\usepackage{amsfonts}
\usepackage{amssymb}
\usepackage{dsfont}
\usepackage{tikz-cd}
\usepackage{subcaption}
\usepackage{float}
\usepackage{url}
\usepackage{hyperref}
\hypersetup{bookmarksdepth=subsection}
\usepackage{booktabs}
\usepackage{import}
\usepackage{enumitem}
\usepackage{etoolbox}
\usepackage[normalem]{ulem}
\usepackage{soul}
\usepackage{upgreek}

\numberwithin{equation}{section}

\usepackage{geometry}
\makeatletter
\patchcmd{\l@section}
	{\addvspace{1.0em \@plus\p@}}
	{\addvspace{0.4em \@plus\p@}}
	{}{}
\makeatother

\newtheorem{theorem}{Theorem}[section]
\newtheorem{lemma}[theorem]{Lemma}
\newtheorem{proposition}[theorem]{Proposition}
\newtheorem{corollary}[theorem]{Corollary}
\newtheorem{remark}[theorem]{Remark}

\newtheorem{definition}[theorem]{Definition}

\newcommand{\RR}{\mathbf{R}}
\newcommand{\NN}{\mathbf{N}}
\newcommand{\ZZ}{\mathbf{Z}}

\newcommand{\TT}{\mathbf{T}}

\newcommand{\EE}{\mathbb{E}}

\newcommand{\PP}{\mathbb{P}}

\newcommand{\mN}{\mathcal{N}}
\newcommand{\mO}{\mathcal{O}}
\newcommand{\mP}{\mathcal{P}}
\newcommand{\mQ}{\mathcal{Q}}

\newcommand{\mF}{\mathcal{F}}
\newcommand{\mD}{\mathcal{D}}

\newcommand{\mS}{\mathcal{S}}

\newcommand{\mA}{\mathcal{A}}

\newcommand{\mZ}{\mathcal{Z}}

\newcommand{\mf}[1]{\mathfrak{#1}}
\newcommand*{\ud}{\mathrm{\,d}}

\def\gen{\mathcal{G}} % full particle generator
\def\gens{\mS} % symmetric part (no scaling)
\def\gena{\mA} %asymmetric part ( no scaling)
\def\var{\mathbb{V}\mathrm{ar}}
\def\ff{\mf{f}}

\title{\resizebox{0.98\textwidth}{!}{
    Equilibrium fluctuations of the weakly asymmetric inclusion process
  }}
\author{Simon Gabriel\footnote{Email: \href{mailto:s.gabriel@berkeley.edu}{s.gabriel@berkeley.edu},
Department of Mathematics, University of California, Berkeley, USA.} }
\date{\ }

\begin{document}

\maketitle

\vspace{-1.7cm}
\begin{abstract}
	We prove that the stationary density fluctuation field of the one-dimensional weakly
	asymmetric simple inclusion process converges to the energy solution of the stochastic
	Burgers equation. The key technical ingredient is a second-order Boltzmann--Gibbs
	principle tailored to the microscopic current.
	As a consequence, we also obtain Ornstein--Uhlenbeck fluctuations for the symmetric inclusion process in a weakly condensing regime, a result of independent interest.
\end{abstract}

\hspace{.4cm}{\small\textit{{Keywords.}} Inclusion Process, Weak KPZ
	Universality, Energy Solutions, Condensation.}

\hspace{.4cm}{\small\textit{{MSC classification.}} Primary: 60K35,
	%60J25,
	Secondary: 60H15, 82B21.}

\tableofcontents

\section{Introduction}

The Kardar--Parisi--Zhang universality class is expected to describe the
nonlinear large-scale fluctuations of one-dimensional driven conservative
systems, including strongly or totally asymmetric dynamics, see for example
\cite{Corwin12KPZ,Remenik22ICM} and the references therein.
The present paper studies
the weak-asymmetry, or weak-universality, route to this class.  In this regime
the asymmetry is sent to zero together with a scaling parameter.  The
limiting density fluctuation field is the stochastic Burgers equation,
equivalently the spatial derivative of the KPZ equation.
Rigorous derivations of this weak limit from microscopic dynamics go back to
Bertini and Giacomin's derivation from the weakly asymmetric simple exclusion
process \cite{BertiniGiacomin97}.  Beyond exclusion,
weak KPZ universality has been established for zero-range dynamics
\cite{GoncJaraSethuraman15}, continuous interacting diffusions \cite{DGP17},
and several variations of such systems.
In the stochastic PDE literature, analogous weak-universality results have
been proved for weakly asymmetric stochastic growth equations and related
interface fluctuation models, see for example \cite{HairerQuastel18}.
Motivated by recent predictions of KPZ-type fluctuations in asymmetric bosonic
and fermionic transport \cite{MinoguchiHuberGarbeGambassiRabl25}, we study here
the weak-asymmetry route for the simple inclusion process, an attractive
conservative particle system with occupation-enhanced jump rates.

\begin{figure}[H]
	\centering
	\includegraphics[width=14cm]{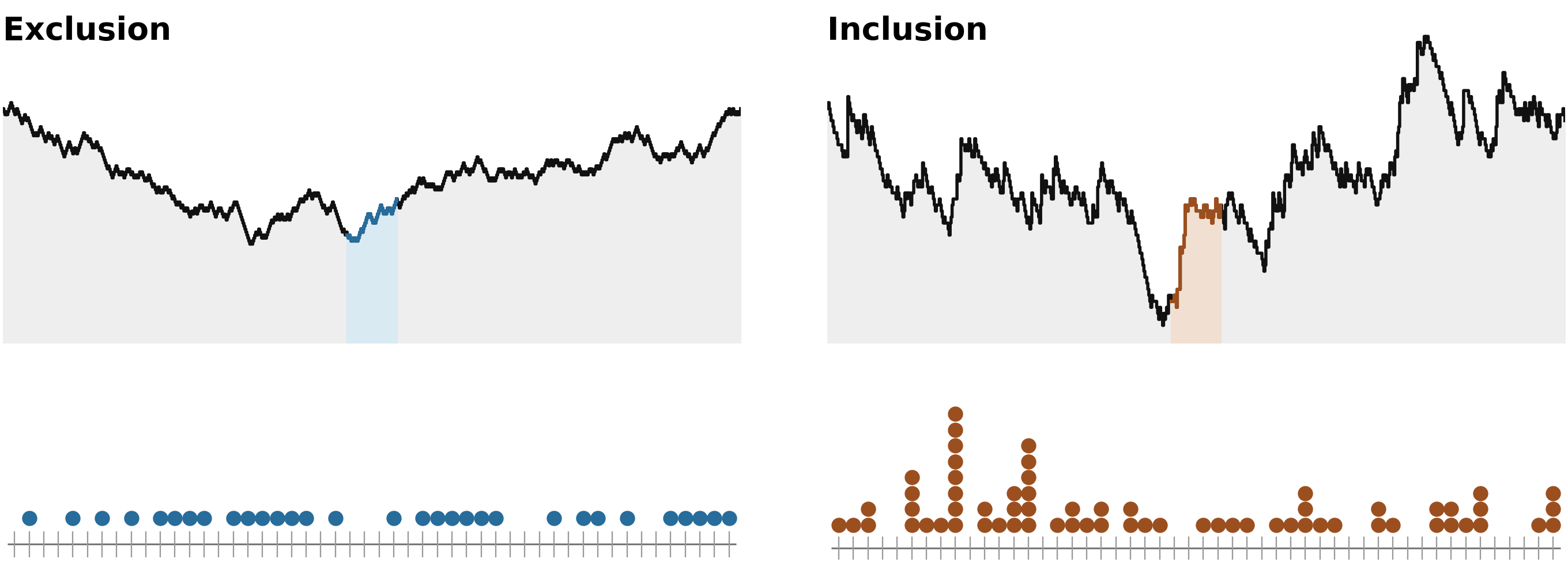}
	\caption{Plot of stationary fluctuation profiles
		$ x \mapsto \sum_{y=1}^{x} (\eta_{y}- \rho) $ with $x \leqslant 750$ and density $
			\rho_{\rm ex}=
			\tfrac{1}{2}$ for the exclusion process and density $ \rho_{\rm ip}=1$ for the
		inclusion process (with $ \alpha=1$).
		The coloured areas correspond to the microscopic particle configurations
		displayed at the bottom.}
		\label{fig:simulations}
\end{figure}

The inclusion process is an interacting particle system on $\ZZ$ with
unbounded occupation variables. 
If the particle configuration is $\eta \in \NN_{0}^{\ZZ}$, then the rate of a particle jumping from $x$ to $y$ is
\begin{equation*}
	c_L(x,y)\eta_x(\alpha+\eta_y).
\end{equation*}
%We study its diffusive scaling as the lattice spacing $L^{-1}$ vanishes when $L$ diverges.
Here $ \eta_{x}$ denotes the number of particles at location $ x \in \ZZ$ and  $c_L$
encodes the geometry of the underlying jumps: below we restrict to nearest-neighbour jumps with a preference to jump to the
right proportional to
$ \beta L^{-1/2}$, $ \beta>0$.
The parameter $L $ will denote the inverse diffusive scaling at which we approximate continuous space as $ L$ grows.
While the factor $\eta_x$ counts particles
available to jump, $\alpha+\eta_y$ enhances jumps into already
occupied sites.
This bosonic stimulation mechanism is responsible for the condensation phenomena
\cite{GRV13} of the
inclusion process and places the model outside the direct scope of the
standard microscopic KPZ derivations.

Our main result (Theorem~\ref{thm_main}) identifies the limit of the equilibrium density fluctuation field $ \sqrt{L} ( \eta_{x}(t)-
	\rho)_{t,x}$, when observed
in a moving frame and started from the invariant product measure with
particle density $\rho>0$.
The scaling limit is the unique stationary energy solution of the singular
stochastic Burgers equation
\begin{equation}\label{eq_burgers}
	\begin{aligned}
		\partial_{t} u
		= \frac{\alpha}{2} \partial_{x}^{2} u
		- \beta \partial_{x} (u^{2})
		+ \sqrt{\alpha\rho+\rho^{2}}\,\partial_{x}\xi\,,
	\end{aligned}
\end{equation}
where $ \xi$ denotes space--time white noise on $ \RR_{+}\times \RR $.\footnote{
We notice that the inclusion process fluctuations are stronger than those associated
with the weakly asymmetric exclusion process (with $ \alpha=1 $), where the noise
scales like $ \sqrt{\rho ( 1- \rho)} \partial_{x} \xi$ with $ \rho \in [0,1]$. See also Figure~\ref{fig:simulations}.}
The derivation of \eqref{eq_burgers} follows the energy-solution strategy initiated in
\cite{GoncJara14}, with new
inputs for the inclusion process.
%The local state space is unbounded, and the microscopic current has genuinely quadratic
%two-site growth.
The novel technical ingredient is therefore a
current-specific second-order Boltzmann--Gibbs principle for the inclusion
process.

Beyond the case of fixed $ \alpha$, we also analyse a weakly condensing regime
$\alpha=\alpha_L\to 0$ such that $\alpha_L L\to\infty$, which leads to diverging
particle clusters.
After suitable normalisation of the density field, we establish Gaussian Ornstein-Uhlenbeck
fluctuations for symmetric particle interactions ($
	\beta=0$),
see Theorem~\ref{thm_main_cond}.
We also discuss possible extensions and limitations to an asymmetric regime in which
similar Burgers-type fluctuations are expected.

\subsection{The inclusion process}\label{sec_ip}

The inclusion process is a conservative interacting particle system with an
attractive jump mechanism,
mimicking bosonic attraction in a discrete space of energy modes.
It was introduced in \cite{GKR07} as the dual of an energy transport model.
The process has a formal infinitesimal description given in terms of
\begin{equation}\label{eq_genIP}
	\begin{aligned}
		\gen_{L} f ( \eta)
		=
		L^{2}
		\sum_{x, y \in \ZZ}
		c_{L} (x,y) \eta_{x}( \alpha + \eta_{y})
		\big[ f ( \eta^{x,y}) - f ( \eta) \big]\,,
	\end{aligned}
\end{equation}
for some $ \alpha>0$ with state space $ \Omega :=
	\NN_{0}^{\ZZ}$ which we equip with the product topology.
Here $ \eta^{x,y}$ denotes the particle configuration $ \eta \in \Omega$ with one particle
moving from position $ x $ to position $ y$, i.e.
$ \eta^{x,y}_{z} = \eta_{z}+ \delta_{y,z} - \delta_{x,z}$.
The function $ c_{L} : \ZZ\times \ZZ \to [0, \infty)$ encodes the
geometry of the underlying lattice $ \ZZ$.
Throughout the paper, we will restrict ourselves to large $L$ and
\begin{enumerate}
	\item \emph{nearest neighbour interactions}, i.e. $ c_{L} (x, y ) = 0 $
	      unless $ |x-y|=1$,

	\item \emph{weakly asymmetric rates}\footnote{
		      To the best of our knowledge,
		      well--posedness of the asymmetric inclusion process in infinite volume on $ \ZZ$ is
		      an open problem and will not be addressed here.
		      Instead, we will consider \emph{the} stationary asymmetric inclusion process on $ \ZZ$ associated to the large
		      volume limit of periodic approximations. This is discussed in detail below.
			  % and in
		      %Appendix~\ref{sec_ip_exists}. Our result applies more generally to any stationary %process
		      %solving the forward and backward martingale problem associated to~$\gen_{L}$.}
	}, i.e. there exists a $ \beta >0$ such
	      that $ c_{L} $ has the form
	      \begin{equation}\label{eq_weakasy}
		      \begin{aligned}
			      c_{L}  (x , x+1) = p_{L} :=
			      \tfrac{1}{2} \big( 1+ L^{-1/2} \beta \big)
			      \quad \text{and}\quad
			      c_{L}(x, x-1 ) = q_{L}:=
			      \tfrac{1}{2} \big( 1-L^{-1/2} \beta\big)\,.
		      \end{aligned}
	      \end{equation}

\end{enumerate}
We will often drop the subscript of $ c = c_{L} $, leaving the dependence on $L$ implicit.
With the above assumption, $ c$ is spatially homogeneous in the sense that $ c (x,y) $
depends only on $ x-y$.

The jump rates in \eqref{eq_genIP} combine two competing dynamics.
On one hand, the dissipative effect by $ \alpha$ leads to diffusive behaviour of
the particles.
On the other hand, the rate $ \eta_{x} \eta_{y} $ encourages particles to accumulate, which can
possibly lead to local particle clusters.
This ``inclusion'' mechanism is the microscopic source of both features that are
central for this paper: the model has unbounded occupation variables with
quadratic two-site transfer rates, and it exhibits condensation phenomena if
the parameter $\alpha= \alpha_{L} $ vanishes in the large $L$ limit.
\par\smallskip\smallskip

Because the jump rates are unbounded, the infinite-volume description requires some care.
First, since all fluctuation fields in this paper are studied in equilibrium,
we will only use the stationary process associated with a one-parameter\footnote{Often
	the stationary measures are instead indexed by the
	fugacity $ \phi = \rho/ (\alpha+ \rho) $.} family of invariant product
measures.
More precisely, for any fixed particle density $ \rho>0 $, we define
the product measure
\begin{equation}\label{eq_nu}
	\begin{aligned}
		\nu_{\rho} (\ud \eta)
		:=
		\bigotimes_{x \in \ZZ} \frac{1}{Z_{\rho}} w (
		\eta_{x}) \Big( \frac{ \rho}{ \alpha + \rho} \Big)^{
		\eta_{x}}\,,\qquad \text{with }\ \quad
		w ( n ) := \frac{ \Gamma ( \alpha + n )}{ n! \Gamma(
			\alpha)}\,,
	\end{aligned}
\end{equation}
and $ Z_{\rho}$ a normalising constant \cite{GRV11}.
Notice that $
	\nu_{\rho}$ is the product of negative binomial distributions $
	\mathrm{NB}(\alpha, \tfrac{\alpha}{ \alpha+ \rho})$.
The measures $ \nu_{\rho}$ are called \emph{grand--canonical
	distributions} and
are supported on the whole configuration space $ \Omega$, without fixing
the total number of particles.
On the other hand, fixing the number of particles on a subset of the
domain $ \ZZ$ will lead to the so called \emph{canonical distributions},
which are introduced in Section~\ref{sec_equivens}.
We will write $ \EE_{\rho}$ for the expectation with respect to
$ \nu_{\rho}$.

Indexing $ \nu_{\rho}$ by the particle density is adequate since one can check that
$\EE_{\rho} [ \eta_{x}] = \rho$, and
\begin{equation}\label{eq_eta_mean_var}
	\begin{aligned}
		 				\var_{\rho}( \eta_{x}) = \rho + \tfrac{ \rho^{2}}{
			\alpha} =: \chi( \rho)\,,
		\quad
		\text{and} \quad
		\EE_{\rho}[ \eta_{x}^{q}] < \infty\,, \quad \forall q
		\geqslant 1
		\,.
	\end{aligned}
\end{equation}
Moreover, we notice the following quantitative moment bounds in
the small $ \alpha$ regime:
\begin{equation}\label{eq_quant_mom_bound}
	\begin{aligned}
		\EE_\rho[\eta_x^q]
		+
		\EE_\rho[|\eta_x-\rho|^q]
		\lesssim_{q,\rho}
		\alpha^{1-q}\,, \quad \forall q \geqslant 1\,,
	\end{aligned}
\end{equation}
and in particular $ \chi ( \rho) \lesssim_{\rho} \alpha^{-1}$.
This is a consequence of $ \nu_{\rho}$ being the product of negative binomial
distributions.

Let us return to the construction of inclusion dynamics on $ \ZZ$.
Throughout the remainder of the paper $ \eta = (\eta^{\rho,L}(t) )_{ t \geqslant 0}$ denotes
\emph{the} stationary process that is approximated by finite dimensional periodic
approximations with initial condition $ \nu_{\rho}$.
Its construction is presented in Proposition~\ref{prop_ip_exists} in the appendix.
We will write $ \mathbf P_{\rho,L}$ for the corresponding law and
$ \mathbf E_{\rho,L}$ for its expectation.
In particular, $ ( \eta (t))_{t \geqslant 0} $ solves the martingale problem
associated to $ \gen_{L}$, that is
\begin{equation}\label{eq_dynkin_mart_timeinde}
	\begin{aligned}
		M_t^{L}(f) :=
		f(\eta(t))-f(\eta(0))
		-
		\int_0^t \gen_{L} f(\eta(s)) \ud s\,,
	\end{aligned}
\end{equation}
is a martingale
for every local\footnote{$f$ only depends on finitely many positions $x$ of the input
	configuration $ (\eta_{x} )_{x \in \ZZ}$} and bounded  $ f: \Omega \to \RR$.
	We stress that only the martingale
formulation is used below, so our results apply without further changes to any sequence
of stationary processes $(\eta^{ \rho, L}(t))_{t \geqslant 0}$ for which \eqref{eq_dynkin_mart_timeinde} and the corresponding backward formulation hold.
Well-posedness beyond such finite-dimensional approximations is a delicate
issue for inclusion processes on the infinite lattice.
In the symmetric case,
%, finite-time flux from infinity is prevented by explicit cancellations.
 the reversible structure and self-duality allows to prove well-posedness for a
wide admissible class of
initial configurations, see for example \cite[Remark~2.1]{AyalaRedig}.
On the other hand,
monotonicity and weighted-mass estimates used for the infinite-volume asymmetric zero-range
process do not apply for the inclusion process.
\par\smallskip\smallskip

Lastly, because an arbitrary number of particles can occupy a single position in the
inclusion process, the attractive interaction term $ \eta_{x} \eta_{y}$ in
\eqref{eq_genIP} can lead to particles clustering into fragments of diverging
size as $ L$ grows and $ \alpha_{L} \to 0$ \cite{GRV13,CG14}.
This phenomenon is known as \emph{condensation}.
As an indication for this, we observe that for
fixed $\alpha>0$, the equilibrium variance $ \chi ( \rho)$ is finite, but diverges
as $ \alpha = \alpha_{L}$ vanishes.
Below we will consider a \emph{weakly condensing regime} where $ \alpha_{L}$ vanishes
slowly enough that $ \alpha_{L} L \to
	\infty$. This leads to typical particle clusters of size $\mO ( \alpha_{L}^{-1})$.
It stands in contrast to the regime $
	\alpha_{L} L = \mO (1)$ where particle clusters are of order $
	\mO(L)$ \cite{JCG19,CGG22}.

\subsection{Density fluctuation field and main results}

We study two equilibrium fluctuation regimes for the inclusion process:
the fixed-$\alpha$ weakly asymmetric regime, in which the
limit is the stochastic Burgers equation, and the weakly condensing regime $
	\alpha_{L} L \to \infty $ with symmetric interactions, in which
the diverging compressibility requires a different normalisation and leads to
Ornstein--Uhlenbeck fluctuations.

Regarding the first regime, our main objective is the analysis of the density fluctuation field $
	\sqrt{L}  (
	\eta_{x} (t) - \rho)_{t,x }$.
However, due to the asymmetry of the generator we cannot simply test the
fluctuation field against a fixed test function $ \varphi \in \mS ( \RR) $. Instead,
we have to consider a moving frame associated with the \emph{microscopic asymmetric current}
\begin{equation}\label{eq_def_current}
	\begin{aligned}
		j_{x , x+1} ( \eta) := \beta\Big( \frac{\alpha}{ 2} (
			\eta_{x}+ \eta_{x+1}) + \eta_{x} \eta_{x+1}\Big) \,.
	\end{aligned}
\end{equation}
The associated \emph{equilibrium flux} is
\begin{equation}\label{eq_def_j}
	\begin{aligned}
		j ( \rho)
		:=
		\EE_{\rho}[ j_{x , x+1}( \eta)] = \beta( \alpha \rho+
		\rho^{2})\,,
	\end{aligned}
\end{equation}
which we compensate by a transport shift.\footnote{For the weakly asymmetric
	simple exclusion process, see \cite{GoncJara14}, one can choose $ \rho$ such that
	$ j ' ( \rho) =0 $ and the transport term vanishes. However, this is not possible
	for the inclusion process, see \eqref{eq_def_j}.}
The correct observables  are therefore of the form
\begin{equation}\label{eq_def_u}
	\begin{aligned}
		u_{L}( t, \varphi; \eta (t))
		:=
		\frac{1}{ \sqrt{L}}  \sum_{x \in \ZZ} \varphi \big( L^{-1}x -
		L^{1/2}
		j' ( \rho) t   \big) \big(
		\eta_{x}(t) - \rho\big)\,, \quad \varphi \in \mS ( \RR) \,.
	\end{aligned}
\end{equation}
We will often drop the dependence on $ \eta$ and simply write
$u_{L}(t,\varphi)$.
After moving with the characteristic speed $j'(\rho)$, 
the leading order term of the asymmetric drift  is a quadratic term with coefficient $\frac{1}{2}j''(\rho)=\beta$, which determines the nonlinear coefficient in the Burgers equation.

Our first result establishes convergence of the density fluctuation field to the energy
solutions of the stochastic Burgers equation \eqref{eq_burgers}.
Energy solutions are introduced in
detail in Section~\ref{sec_energy_solutions}.

\begin{theorem}[Non--condensing weakly asymmetric IP]\label{thm_main}
	Let $ \rho> 0 $ and $ \alpha , \beta>0 $.
	The sequence of stationary density fluctuation fields $ ( u_{L})_{L \in \NN}$
	with weak asymmetry \eqref{eq_weakasy}
	converges weakly in $ D ([0,T] ,
		\mS ' ( \RR ))$ to the unique stationary energy solution of the stochastic Burgers equation
	\begin{equation}\label{eq_burgers_again}
		\begin{aligned}
			\partial_{t} u
			= \frac{\alpha}{2} \partial_{x}^{2} u
			- \beta \partial_{x} (u^{2})
			+ \sqrt{\alpha\rho+\rho^{2}}
			\,\partial_{x}\xi\,,
		\end{aligned}
	\end{equation}
	in the sense of Definition~\ref{def_energy_solution}.
\end{theorem}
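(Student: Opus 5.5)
We follow the energy-solution strategy of \cite{GoncJara14}: establish tightness of $(u_L)_L$ in $D([0,T],\mS'(\RR))$, and then show that every limit point is a stationary energy solution of \eqref{eq_burgers_again} in the sense of Definition~\ref{def_energy_solution}. Uniqueness of energy solutions then gives convergence of the whole sequence. Throughout we use only the forward and backward martingale formulations \eqref{eq_dynkin_mart_timeinde} of the stationary process started from $\nu_\rho$. All moment bounds come from the negative binomial structure \eqref{eq_eta_mean_var}, with $\alpha$ fixed.

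\emph{Step 1: Dynkin decomposition.} Apply \eqref{eq_dynkin_mart_timeinde} to $u_L(t,\varphi)$. The time-dependent test function is handled by adding $\partial_t$. A localisation and truncation argument is needed, because $u_L$ is neither local nor bounded; we justify it using the $L^2(\nu_\rho)$ bounds.
\begin{itemize}
\item The symmetric part of $\gen_L$ produces $\frac{\alpha}{2}\,u_L(t,\Delta_L\varphi)$. This is because the symmetric current $\frac12(\eta_x(\alpha+\eta_{x+1})-\eta_{x+1}(\alpha+\eta_x))=\frac{\alpha}{2}(\eta_x-\eta_{x+1})$ is linear in $\eta$.
\item The asymmetric part produces $L^{2}\cdot L^{-1/2}\cdot L^{-1/2}\cdot L^{-1}\sum_x \nabla\varphi\, (j_{x,x+1}-j(\rho))$, i.e.\ $\sum_x \nabla_L\varphi_x \,(j_{x,x+1}(\eta)-j(\rho))$ with the correct prefactor.
\item Expanding $j_{x,x+1}-j(\rho)=j'(\rho)\,\frac{\bar\eta_x+\bar\eta_{x+1}}{2}+\beta\,\bar\eta_x\bar\eta_{x+1}$, where $\bar\eta=\eta-\rho$, the linear part is exactly cancelled by the moving frame $L^{1/2}j'(\rho)t$, up to vanishing errors.
\end{itemize}
What remains is the quadratic term $\beta L^{-1/2}\sum_x\partial_x\varphi(\cdot)\,\bar\eta_x\bar\eta_{x+1}$ plus the martingale $M^L_t(\varphi)$. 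The predictable quadratic variation of $M^L_t(\varphi)$ is
\[
\int_0^t L^{-1}\sum_x(\nabla_L\varphi)^2\,\eta_x(\alpha+\eta_{x+1})\ud s+\dots,
\]
and by stationarity and an $L^2$ ergodic estimate it converges to $t(\alpha\rho+\rho^2)\|\partial_x\varphi\|_{L^2}^2$. This identifies the noise coefficient $\sqrt{\alpha\rho+\rho^2}$. Tightness of the martingale and drift terms follows from Kolmogorov/Aldous criteria together with Mitoma's theorem. Tightness of the quadratic term uses the Kipnis--Varadhan inequality $\mathbf E[\sup_t|\int_0^t F|^2]\lesssim T\|F\|_{-1}^2$ with respect to the symmetric part, whose Dirichlet form is $\frac{L^2}{2}\sum_x\EE_\rho[\eta_x(\alpha+\eta_{x+1})(\nabla_{x,x+1}f)^2]$.

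\emph{Step 2: second-order Boltzmann--Gibbs principle (main obstacle).} We must show that
\[
\mathbf E\Big[\Big|\int_0^t L^{-1/2}\sum_x\psi(x/L)\Big(\bar\eta_x\bar\eta_{x+1}-\big((\overrightarrow{\eta}^{\,\ell}_x-\rho)^2-\tfrac{\chi(\rho)}{\ell}\big)\Big)\ud s\Big|^2\Big]\lesssim t\Big(\frac{\ell}{L}+\frac{tL}{\ell^{2}}\Big)\|\psi\|^2_{L^2},
\]
where $\overrightarrow{\eta}^{\,\ell}_x$ is the average of $\eta$ over the $\ell$ sites to the right of $x$. The proof proceeds as follows.
\begin{itemize}
\item First replace $\bar\eta_{x+1}$ by a block average through a multiscale doubling scheme. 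Each step is controlled by the Kipnis--Varadhan inequality and a path argument: a gradient $\bar\eta_x(\eta_y-\eta_z)$ is written as a telescoping sum of $\eta_{w}-\eta_{w+1}$. Its $H_{-1}$ norm is bounded using the integration-by-parts identity
\[
\EE_\rho[(\eta_w-\eta_{w+1})g]=\EE_\rho\big[\eta_w(\alpha+\eta_{w+1})\,\nabla_{w,w+1}g\big]/\alpha\cdot(\text{const}),
\]
which follows from the detailed-balance relation of $\nu_\rho$ for the symmetric inclusion dynamics. Here the inclusion rates, rather than constant rates, appear, and the unbounded occupations require Cauchy--Schwarz with the weights $\eta_w(\alpha+\eta_{w+1})$ and uses of \eqref{eq_eta_mean_var}.
\item Then replace $\bar\eta_x\,(\overrightarrow{\eta}^{\,\ell}_{x}-\rho)$ by the square of the block average minus its canonical variance. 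This uses the equivalence of ensembles from Section~\ref{sec_equivens}: conditional on the block mass, the centred product has a conditional expectation that is an explicit quadratic function of the block density, with $O(1/\ell)$ corrections, under the canonical (Dirichlet-multinomial) measures. The spectral gap of the symmetric inclusion process on a block of size $\ell$ is of order $\ell^{-2}$, uniformly in the particle number, which is known from duality/Aldous-type arguments. This yields the required $H_{-1}$ bound for the fluctuation part.
\end{itemize}
This step is the delicate one, since the quadratic current and the unbounded moments break the bounded-spin arguments. We expect most of the work to go into the uniform spectral gap and into the moment control of the canonical measures.

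\emph{Step 3: identification of the limit.} Choose $\ell=\varepsilon L$. The Boltzmann--Gibbs principle then expresses the quadratic term as $\beta\int_0^t u^\varepsilon(s)^2(\partial_x\varphi)\ud s$ up to an error of order $\varepsilon$, uniformly in $L$. This gives the energy estimate required in Definition~\ref{def_energy_solution}: the limit $\mathcal{A}^\varepsilon$ is Cauchy in $L^2$ with bound $\lesssim t\,\varepsilon\|\partial_x\varphi\|^2$. Stationarity of the limit (white noise of variance $\chi(\rho)$) follows from the central limit theorem under $\nu_\rho$. Applying the same argument to the time-reversed process, whose generator is $\gen_L$ with $\beta$ replaced by $-\beta$ since $\nu_\rho$ is invariant with adjoint of this form, gives the backward martingale condition. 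Passing to the limit in the martingale decomposition shows that every limit point is a stationary energy solution of \eqref{eq_burgers_again}. Its diffusion coefficient is $\frac{\alpha}{2}$, and the compatibility $\frac{\alpha}{2}\chi(\rho)=\frac12(\alpha\rho+\rho^2)$ between the noise and the invariant measure holds. Uniqueness then concludes the proof.
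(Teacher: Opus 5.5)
The gap is in the second bullet of your Step~2, where you control the fluctuation part with one application of the spectral gap at scale $\ell$. As written, that step does not give the bound you claim. Your overall architecture matches the paper's: Dynkin decomposition, cancellation of the linear current by the moving frame, tightness, identification of limits as controlled processes with the reversed dynamics ($\beta\mapsto-\beta$), and uniqueness.

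Take the average over $x+1,\dots,x+\ell$, and write $a=\eta_x-\rho$ and $S=N_{x,\ell+1}-(\ell+1)\rho$. Your product is then $a(S-a)/\ell$. Exchangeability of the canonical measure gives $\EE_\rho[a\,|\,N_{x,\ell+1}]=S/(\ell+1)$, hence
\[
G_x:=\frac{a(S-a)}{\ell}-\EE_\rho\Big[\frac{a(S-a)}{\ell}\,\Big|\,N_{x,\ell+1}\Big]
=\Big(\eta_x-\frac{N_{x,\ell+1}}{\ell+1}\Big)\frac{S}{\ell}
-\frac{a^2-\EE_\rho[a^2\,|\,N_{x,\ell+1}]}{\ell}\,.
\]
The first term has second moment $\asymp\chi(\rho)^2/\ell$, so $\EE_\rho[G_x^2]\asymp\ell^{-1}$, not $O(\ell^{-2})$. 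Feeding this into Proposition~\ref{prop_Hminus1_bound} and Corollary~\ref{cor_kp_postHminus1} costs $\ell^2$ per block. A further factor $\ell$ comes from splitting the overlapping blocks into $\ell+1$ families of disjoint ones. With $v_{s,x}=\nabla^L_x\varphi_s$, so that $\int_0^t\|v_{s,\cdot}\|_{\ell^2}^2\ud s\asymp tL$, you only get
\[
L^{-2}\cdot\ell\cdot\ell^{2}\cdot\ell^{-1}\cdot tL=\frac{t\,\ell^{2}}{L}\,.
\]
For $\ell=\varepsilon L$ this is $t\varepsilon^2L\to\infty$, so the bound $t(\ell/L+tL/\ell^2)$ does not follow. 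The gap estimate is very lossy here: $G_x$ is essentially a gradient times a block-measurable factor, not a low-lying eigenfunction.

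The paper avoids this by applying the spectral gap only to functions whose second moment decays like the inverse square of the block size. After a one-block step at fixed scale $\ell_0$, it renormalises dyadically through $\psi_\ff(2^k;\cdot)-\psi_\ff(2^{k+1};\cdot)$. By the explicit Dirichlet--multinomial formula (Lemma~\ref{lem_equivensemble}), these have second moment $\lesssim 2^{-2k}$ for fixed $\alpha$. Each scale then costs $\lesssim 2^k\|v\|_{\ell^2}^2/L^2$, and the geometric sum gives $\ell\|v\|_{\ell^2}^2/L^2$.

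You can also repair the step within your own framework. The first term of $G_x$ is a combination of $\eta_w-\eta_{w+1}$ over edges of the block, multiplied by $S/\ell$, which is invariant under jumps inside the block. Your detailed-balance integration by parts, with H\"older to absorb the weights $\eta_w(\alpha+\eta_{w+1})$, bounds its $H^{-1}$ cost by $O(\ell)\|v\|_{\ell^2}^2$. Only the second term, of size $O(\ell^{-1})$ in $L^2(\nu_\rho)$, should go to the spectral gap.

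Two smaller points:
\begin{itemize}
\item The nonlinear drift is $\beta\sum_x\nabla^L_x\varphi_t\,\bar\eta_x\bar\eta_{x+1}$ with no factor $L^{-1/2}$, as your own prefactor count in Step~1 shows. Your displayed Boltzmann--Gibbs bound has the size appropriate to $\sum_x\psi(x/L)(\cdots)$, not to the $L^{-1/2}$-normalised sum you wrote.
\item The Kipnis--Varadhan inequality does not follow from the martingale problem for bounded local functions alone. The paper derives it from the periodic approximations (Lemma~\ref{lem_local_kv_finite_volume}).
\end{itemize}
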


The novelty of Theorem~\ref{thm_main} is that it applies the nonlinear
fluctuation theory to the classical weakly asymmetric inclusion process.
For exclusion-type models and zero-range or related mass-conservative systems, the
stochastic
Burgers limit has been obtained
by the use of G\"artner transformations \cite{BertiniGiacomin97,DemboTsai16},
several versions of a second-order Boltzmann--Gibbs principle
\cite{GoncJara14,GoncJaraSethuraman15,Yang23BG},
and
Regularity Structures \cite{HuangMatetskiWeber25}.
The inclusion process falls outside these frameworks because the occupation
variables are unbounded and the rate
$\eta_x(\alpha+\eta_y)$ has quadratic growth in the local particle numbers.
Our proof is therefore tailored to the inclusion process. It is based on a replacement
theorem through a second-order Boltzmann--Gibbs principle \cite{GoncJara14} and
the notion of Energy solutions \cite{GubPerk}.

We restrict ourselves to the stationary setting.  This avoids the separate
problem of constructing non-stationary infinite-volume asymmetric inclusion
dynamics and allows us to prove the second-order Boltzmann--Gibbs principle
by stationary $L^2(\nu_\rho)$-estimates.  Extending the argument beyond
equilibrium would require replacement estimates that do not rely on global
stationarity, see for example \cite{Yang18Burgers,Yang23BG}.
\par\smallskip\smallskip

Our quantitative analysis of the fluctuation behaviour that proves Theorem~\ref{thm_main}
extends to the weakly condensing regime
$ \alpha_{L}L \to \infty $ for symmetric
interactions.
% , since we carefully keep track of the explicit $ \alpha$-dependencies.
In this case, we consider the variance-normalised field
\begin{equation}\label{eq_cond_density_field}
	\begin{aligned}
		v_{L}(t, \varphi )
		:=
		\frac{1}{ \sqrt{  \chi_{L}( \rho) L}}
		\sum_{x \in \ZZ} \varphi ( L^{-1} x) ( \eta_{x}(t/ \alpha_{L}) - \rho)
		\,.
	\end{aligned}
\end{equation}
Here we added the subscript to $ \chi_{L}$ to make the dependence on $ \alpha_{L}$
explicit, in particular, $ \lim_{L \to \infty}  \chi_{L}( \rho) =\infty$ for every $ \rho
	>0 $.
%The time acceleration by $1/\alpha_L$ compensates the vanishing diffusive
%coefficient of the symmetric dynamics.

\begin{theorem}[Weakly condensing symmetric IP]\label{thm_main_cond}
	Let $ \rho>0 $, $ \beta=0 $, and let $ ( \alpha_{L})_{L \in \NN} $ be a convergent sequence such
	that $ \lim_{L \to \infty} \alpha_{L} \in [0, \infty) $ and $ \lim_{L \to \infty}\alpha_{L} L = \infty $.
	Then the sequence of symmetric density fluctuation fields $( v_{L})_{L \in \NN }$
	converges weakly in $ D ([0,T] ,
		\mS ' ( \RR ))$ to the stationary Ornstein–Uhlenbeck process, which solves
	\begin{equation*}
		\partial_{t} v
		= \frac{1}{2} \partial_{x}^{2} v
		+ \partial_{x}\xi\,.
	\end{equation*}
\end{theorem}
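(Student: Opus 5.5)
Since $\beta=0$, the plan is the standard route for linear equilibrium fluctuations: a Dynkin martingale decomposition for $v_L(t,\varphi)$, tightness in $D([0,T],\mS'(\RR))$ by Mitoma's criterion, and identification of every limit point through the martingale characterisation of the stationary Ornstein--Uhlenbeck process. The only genuinely new feature is the $\alpha_L$-dependence. It must be tracked explicitly using the moment bounds \eqref{eq_quant_mom_bound} and the normalisation $\chi_L(\rho)\sim\rho^2/\alpha_L$.

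\textbf{Step 1: drift.} For $\beta=0$ the symmetric rates telescope. With $f=\eta_x$ one gets
\[
\gen_L\eta_x=\tfrac{L^2}{2}\big[\alpha_L(\eta_{x+1}+\eta_{x-1}-2\eta_x)+(\eta_x\eta_{x+1}+\eta_{x-1}\eta_x-\eta_{x+1}\eta_x-\eta_x\eta_{x-1})\big],
\]
and the quadratic terms cancel exactly. This is the key structural fact: there is no gradient-type nonlinearity. Summing by parts and using the time change $t\mapsto t/\alpha_L$, the drift of $v_L(t,\varphi)$ becomes
\[
\int_0^t \frac{1}{\sqrt{\chi_L L}}\sum_x \tfrac12 L^2\Delta_L\varphi(x/L)\,(\eta_x-\rho)\,\ud s=\int_0^t v_L(s,\tfrac12\Delta_L\varphi)\ud s ,
\]
where $\Delta_L$ is the discrete Laplacian. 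This term converges to $\int_0^t v(s,\tfrac12\varphi'')\ud s$, because stationarity gives $\EE|v_L(s,\psi)|^2=L^{-1}\sum_x\psi(x/L)^2$ uniformly in $L$.

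\textbf{Step 2: quadratic variation.} By $\gen_L f^2-2f\gen_L f$, the quadratic variation of the martingale is
\[
\langle M(\varphi)\rangle_t=\frac{1}{\alpha_L}\int_0^t\frac{L^2}{\chi_L L}\sum_x \tfrac12\big[\eta_x(\alpha_L+\eta_{x+1})+\eta_{x+1}(\alpha_L+\eta_x)\big]\big(\nabla_L\varphi(x/L)\big)^2L^{-2}\ud s .
\]
Its expectation equals $t\,\frac{1}{\alpha_L\chi_L}(\alpha_L\rho+\rho^2)\,L^{-1}\sum_x(\nabla_L\varphi)^2$. Since $\alpha_L\chi_L=\alpha_L\rho+\rho^2$, this gives exactly $t\|\varphi'\|_{L^2}^2$ in the limit, which matches the noise $\partial_x\xi$.

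It remains to show that the variance of the integrand vanishes, and here the weak-condensation assumption enters. The variance is a sum of local terms of order $(\alpha_L\chi_L)^{-2}L^{-1}\EE_\rho[(\eta_x\eta_{x+1})^2]$. By \eqref{eq_quant_mom_bound} this is $\lesssim \alpha_L^{2}\cdot\alpha_L^{-2}L^{-1}\cdot\alpha_L^{-1}$, up to $\rho$-constants, so it is $\mO((\alpha_LL)^{-1})\to0$. A time-integrated version of the same bound gives $L^1$ convergence of $\langle M(\varphi)\rangle_t$ to $t\|\varphi'\|^2$.

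\textbf{Step 3: tightness and identification.} Two jump bounds are needed, and they can be combined with Aldous's criterion:
\begin{itemize}
\item Each jump of $v_L$ has size $\lesssim \|\varphi'\|_\infty/(L\sqrt{\chi_L L})\to0$.
\item Maximal martingale moments follow from Burkholder--Davis--Gundy, using the quadratic-variation bound of Step 2.
\end{itemize}
Together these give tightness of each $v_L(\cdot,\varphi)$ and continuity of limit points, and Mitoma's criterion then gives tightness in $D([0,T],\mS'(\RR))$.

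For identification, the fixed-time marginals converge to white noise with variance $1$ by a CLT for triangular arrays. The Lindeberg condition holds since $\EE|\eta-\rho|^4/\chi_L^2\lesssim\alpha_L^{-3}\alpha_L^2=\alpha_L^{-1}=o(L)$. Every limit point therefore solves the martingale problem of the OU equation started from its invariant law. Uniqueness (Holley--Stroock) concludes the proof.

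\textbf{Main obstacle.} The hardest step is the quadratic-variation law of large numbers in Step 2. It is the only place where condensation could break the argument, since clusters of size $\alpha_L^{-1}$ make the fourth moments blow up. The exact cancellation of the $\eta\eta$ terms in the drift makes the rest comparatively routine. If the crude variance bound above turned out to be insufficient, the plan is to use instead a product-measure covariance computation: only neighbouring sites are correlated, and the fourth moment of the negative binomial can be computed explicitly.
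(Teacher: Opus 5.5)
Your architecture is sound and matches the paper: the Dynkin decomposition, the exact cancellation of the $\eta\eta$ terms in $\gen_L\eta_x$, the Lindeberg CLT for the marginals, Mitoma, and the martingale characterisation of the OU limit. The genuine gap is the variance estimate in Step 2, which is off by a factor $\alpha_L^{-1}$.

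By independence, $\EE_\rho[(\eta_x\eta_{x+1})^2]=(\EE_\rho[\eta_0^2])^2=(\chi_L+\rho^2)^2\asymp\rho^4\alpha_L^{-2}$, not $\alpha_L^{-1}$. Indeed $\var_\rho(\eta_0\eta_1)=\chi_L^2+2\rho^2\chi_L$. Meanwhile the prefactor $(\alpha_L\chi_L)^{-2}=(\alpha_L\rho+\rho^2)^{-2}$ is of order one, not $\alpha_L^{2}$. Moreover $\mathrm{Cov}_\rho(a_x,a_{x+1})=(\rho+\alpha_L/2)^2\chi_L\geqslant 0$, and non-adjacent terms are independent. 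Hence the static variance of the bracket integrand $\frac{1}{\alpha_L\chi_L}\frac1L\sum_x a_x(\nabla^L_x\varphi)^2$ is genuinely of order $\alpha_L^{-2}L^{-1}$, bounded below as well as above.

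Cauchy--Schwarz in time therefore only yields $\mathbf E_{\rho,L}\big[|\langle\widetilde M^L(\varphi)\rangle_t-\mathbf E_{\rho,L}\langle\widetilde M^L(\varphi)\rangle_t|^2\big]\lesssim t^2/(\alpha_L^2L)$. This vanishes only when $\alpha_L^2L\to\infty$, and fails for instance for $\alpha_L=L^{-3/4}$; it is exactly the weaker statement discussed in Remark~\ref{rem_cond_improvement}. Your fallback, an explicit product-measure covariance computation, cannot repair this, because it computes the same static variance, which really is this large. The gap also propagates to Step 3. Aldous/BDG tightness, and the uniform integrability needed to pass the martingale property of $\widetilde M$ and of $\widetilde M^2-t\|\partial_x\varphi\|_{L^2(\RR)}^2$ to the limit, both rest on the bracket law of large numbers.

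The missing idea is to use the time integral dynamically rather than statically, as in Lemma~\ref{lem_quant_qv}. Split $a_x-\alpha_L\chi_L=(\tau_x\ff-\tau_x\psi_\ff(2;\cdot))+\tau_x b_{\alpha_L,L}$, where $\psi_\ff(2;\eta)=\frac{\alpha N(N-1)}{2(2\alpha+1)}-\rho N+\rho^2$ with $N=\eta_x+\eta_{x+1}$ is the canonical expectation of $\ff$ given the two-site particle number.

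\begin{itemize}
\item The first piece has zero conditional mean given that particle number. The Kipnis--Varadhan inequality combined with the inclusion-process spectral gap on a two-site block (Corollary~\ref{cor_kp_postHminus1}) bounds it by $T\alpha^{-4}L^{-3}$, i.e.\ $T/(\alpha_L^3L^3)$ after the time change and normalisation.
\item The second piece is a function of $N$ of size $\alpha N^2+N$, so $\EE_\rho[b^2]\lesssim\alpha_L^{-1}$ instead of $\alpha_L^{-2}$. Under condensation $\eta_x\eta_{x+1}$ is mostly zero, and its large variance is a within-block fluctuation that the dynamics averages out. Here the crude static bound suffices and gives $T^2/(\alpha_LL)$.
\end{itemize}

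Together these give \eqref{eq_qv_quant_cond}, which vanishes exactly under $\alpha_LL\to\infty$.
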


The theorem describes the Gaussian
fluctuation regime that remains when $\alpha_{L }$ vanishes slowly
enough that many mesoscopic clusters of size $ o(L)$ are still averaged by each macroscopic
test function.
For fixed $ \alpha$, duality methods yield
equilibrium and non-equilibrium Ornstein--Uhlenbeck limits for symmetric
inclusion dynamics, including long-jump variants
\cite{AyalaCarinciRedig21,AyalaZimmer25}.
Beyond the weakly condensing regime the condensation behaviour differs drastically and
Theorem~\ref{thm_main_cond} is not expected to extend to this regime.
There the cluster sizes are of order $ \mO (L)$ and their occurrence is much sparser across
the space, which does not lead to the CLT-like behaviour at macroscopic scales described above.
Here the natural
observables are cluster statistics or cluster locations rather than centred
spatial density fields \cite{AyalaRedig,CGG24,Gabriel25,LandimKim2026}.

The closest comparison in the weakly condensing regime is the size-biased
mean-field analysis of \cite{CGG24}.  There the inclusion process on the
complete graph is encoded by the measure-valued process which records the empirical distribution of rescaled cluster sizes.
In the regime $\alpha_L L\to\infty$, this process has a deterministic
hydrodynamic limit, whose profile is exponential.
This is consistent with the weakly condensing picture here: typical size-biased
clusters have size of order $\alpha_L^{-1}$, and
$\chi_L(\rho)\sim \rho^2/\alpha_L$.
In contrast to the size--biased analysis, Theorem~\ref{thm_main_cond} keeps the spatial
geometry.
%  and moreover proves  a spatial Gaussian fluctuation result.

The natural next question is whether one can combine the two mechanisms above,
namely weak asymmetry and weak condensation.  The scaling analysis in
Section~\ref{sec_asym_cond} suggests a nontrivial asymmetric regime, which we leave for future work.

\paragraph{Concerning AI usage.}
ChatGPT 5.5 Pro was used to review parts of the manuscript. During this review, it was observed that
Lemma~\ref{lem_quant_qv} yields the sharper quadratic-variation bound
\eqref{eq_qv_quant_cond}. This allowed us to strengthen 
Theorem~\ref{thm_main_cond} from
$\alpha_L^{2}L\to\infty$ to the full weakly condensing regime, see
Remark~\ref{rem_cond_improvement}.

\section{Preliminaries and proof strategy}

\subsection{Generator decomposition and martingales}

We begin by recording the algebraic identities for the generator that will be
used throughout the paper.  All adjoints are taken in $L^2(\nu_\rho)$, whose
inner product is
\begin{equation*}
	\begin{aligned}
		\langle f, g \rangle_{\rho}
		:= \EE_{\rho}[ f( \eta)  g ( \eta) ] \,.
	\end{aligned}
\end{equation*}
The scaling of the weak asymmetry separates the accelerated generator into a
diffusive symmetric part and a lower-order antisymmetric part.  Namely, with
$\gen_L^\ast$ denoting the adjoint of $\gen_L$, we set
\begin{equation}\label{eq_sym}
	\begin{aligned}
		\gens f ( \eta)
		:=
		\frac{ 1}{2 L^{2} } \big( \gen_{L} + \gen_{L}^{\ast}\big)
		=
		\frac{1}{2}
		\sum_{\substack{x,y  \in \ZZ \\ |x-y| = 1}}
		\eta_{x} ( \alpha+ \eta_{y}) \big[ f ( \eta^{x,y}) - f (
			\eta) \big]\,,
	\end{aligned}
\end{equation}
and
\begin{equation*}
	\begin{aligned}
		\gena f ( \eta)
		:= &
		L^{-3/2}
		\big( \gen_{L} - L^{2}\gens\big) f ( \eta) \\
		=  &
		\frac{\beta}{2}
		\sum_{x \in \ZZ}
		\Big(
		\eta_{x} ( \alpha+ \eta_{x+1})
		\big[ f ( \eta^{x,x+1}) - f (\eta) \big]
		-
		\eta_{x} ( \alpha+ \eta_{x-1})
		\big[ f ( \eta^{x,x-1}) - f (\eta) \big]
		\Big)\,.
	\end{aligned}
\end{equation*}
After this normalisation neither $\gens$ nor $\gena$ depends on $L$, and $ \gen_{L}=L^{2}\gens+L^{3/2}\gena$.
Moreover, 
\begin{equation}\label{eq_adjointness}
	\begin{aligned}
		\gens^{\ast} = \gens\,, \quad \gena^{\ast} = - \gena\,,
		\quad \text{and} \quad
		\gen_L^{\ast} = L^{2}\gens - L^{3/2}\gena \,, \quad
		\text{on }  L^{2} ( \nu_{\rho}) \,.
	\end{aligned}
\end{equation}
%which can be deduced by a direct change of variables with respect to $\nu_\rho$.

Thus $\gens$ is the reversible part of the dynamics, while $\gena$ is the
part responsible for the asymmetric current.  Indeed, with the current
definition \eqref{eq_def_current}, the action of $\gena$ on the coordinate
field is given in terms of
\begin{equation*}
	\begin{aligned}
		\gena \eta_{x} = j_{x-1 , x}( \eta) - j_{x , x+ 1}(
		\eta)\,.
	\end{aligned}
\end{equation*}
This identity is the microscopic origin of the transport correction in \eqref{eq_def_u} and of the
nonlinear drift appearing in the fluctuation field.

Throughout the paper we will apply Dynkin's formula for time-dependent observables that are not
bounded and local, such as the density fluctuation field.
Indeed, our construction of the inclusion process provides a solution to the martingale problem for bounded local
test functions, see Proposition~\ref{prop_ip_exists}. In Section~\ref{sec_dynkin}, we extend this to non-local polynomial fields of the
form
\begin{equation*}
	F(t,\eta)
	:=
	\sum_{x\in\ZZ} v_{t,x}P(\eta_x),
\end{equation*}
where $P$ is a polynomial and $v\in C^1([0,T];\ell^1(\ZZ))$. More precisely, for every
observable obtained through this extension, the process
\begin{equation}\label{eq_dynkin_mart}
	\begin{aligned}
		M_t^{L}(F) :=
		F (t, \eta(t)) - F(0, \eta(0))
		- \int_{0}^{t} ( \partial_{s} + \gen_{L}) F(s,
		\eta(s))\ud s
	\end{aligned}
\end{equation}
is a martingale.  Its predictable quadratic variation is given by the squared jump
increments:
\begin{equation}\label{eq_qv}
	\begin{aligned}
		\langle M^{L}(F) \rangle_{t}
		 & =
		L^{2}
		\int_{0}^{t}
		\sum_{x,y \in \ZZ}
		c(x,y) \eta_{x}( \alpha + \eta_{y})
		\big( F (s, \eta^{x,y}(s)) - F( s, \eta(s)) \big)
		^{2}
		\ud s\,.
	\end{aligned}
\end{equation}
Taking expectation under the stationary law and using the adjointness identities
\eqref{eq_adjointness} yields the identity
\begin{equation*}
	\begin{aligned}
		\mathbf E_{\rho,L}\big[\langle M^L(F)\rangle_t\big]
		=
		2L^{2}
		\int_0^t
		\langle F(s,\cdot),-\gens F(s,\cdot)\rangle_\rho
		\ud s\,.
	\end{aligned}
\end{equation*}
In particular, the extension in Section~\ref{sec_dynkin} applies to
$F(t,\eta):=u_L(t,\varphi;\eta)$, so the fluctuation field can be decomposed
using \eqref{eq_dynkin_mart} and its martingale bracket can be computed from
\eqref{eq_qv}.

\subsection{Canonical measures on finite boxes}

In Section~\ref{sec_ip}, we considered the grand canonical invariant measures $
	\nu_{\rho}$ for which the particle number was undetermined.
On the other hand, we may fix a box of length $ \ell \in \NN$ of the form
\begin{equation*}
	\begin{aligned}
		\mathbf{B}_{x_{0}, \ell}:=
		\{
		x_{0}, x_{0}+1, \ldots, x_{0}+ \ell -1
		\}\subset \ZZ\,, \quad x_{0} \in \ZZ\,
	\end{aligned}
\end{equation*}
and write $ N_{x_{0}, \ell} ( \eta)  = \sum_{x \in
	\mathbf{B}_{x_{0}, \ell}} \eta_{x}$ for the total
number of particles in the box $ \mathbf{B}_{x_{0} , \ell}$.
Moreover, we define the shift operators $ \tau_{x}$ that shift particle
configurations by $ x$, i.e. $ (\tau_{x} \eta)_{y}= \eta_{x+y}$.
The operators can be extended to act on functions
$ f : \Omega \to \RR$ by
\begin{equation*}
	\begin{aligned}
		(\tau_{x}f)( \eta) = f ( \tau_{x} \eta) \,.
	\end{aligned}
\end{equation*}
Hence, it will suffice to consider $ x_{0} =0 $ for which we simply
write $ \mathbf{B}_{\ell} $ for the corresponding box of length $ \ell$.
Accordingly, we fix the state space
\begin{equation*}
	\begin{aligned}
		\Omega_{\ell ,n}:= \Big\{ \eta \in \NN_{0}^{\mathbf{B}_{ \ell}}\, :\,
		N_{ \ell}( \eta) = n  \Big\}
	\end{aligned}
\end{equation*}
of particle configurations in the box with total particle number $n$.

Conditioning $ \nu_{\rho}$ onto $ \Omega_{\ell, n}$ yields the
\emph{canonical distribution} $\pi_{\ell, n }$, which is a
 Dirichlet–multinomial distribution with
probability mass function
\begin{equation}\label{eq_def_canonical}
	\begin{aligned}
		\pi_{\ell ,n} ( \eta) := \frac{1}{Z_{\ell, n}} \prod_{x \in
			\mathbf{B}_{\ell}} w( \eta_{x})\,, \quad \forall \eta \in \Omega_{\ell ,n}\,,
	\end{aligned}
\end{equation}
where $ Z_{\ell ,n} = \tfrac{ \Gamma (n + \alpha\ell)}{ n! \Gamma ( \alpha\ell ) }$ \cite{GRV11}. Recall the definition of $ w $ in \eqref{eq_nu}.
These canonical distributions are stationary with respect to the symmetric inclusion process on $ \mathbf{B}_{\ell}$
generated by
\begin{equation}\label{eq_Srefell}
	\begin{aligned}
		\gens_{\rm ref}^{(\ell)}f ( \eta)
		:=
		\frac{1}{2}
		\sum_{ \substack{x,y \in \mathbf{B}_{\ell} \\ |x-y| =1 }}
		\eta_{x}( \alpha+ \eta_{y} ) \big[ f ( \eta^{x,y}) - f ( \eta) \big]\,.
	\end{aligned}
\end{equation}
In fact, the measure $ \pi_{\ell, n}$ is the unique invariant measure with respect to $
	\gens_{\rm ref}^{(\ell)}$ and finite state space $ \Omega_{\ell ,n}$ since the underlying process is
irreducible.
We will write $ \EE_{\ell ,n} $ and $ \var_{\ell ,n}$ for expectation and variance with
respect to $ \pi_{\ell ,n}$, respectively.

\subsection{The quadratic current observable and its block
	replacement}\label{sec_intro_quadratic}

The central object of our analysis is the martingale formulation \eqref{eq_dynkin_mart} applied to the
density fluctuation field \eqref{eq_def_u}. Consequently, we will encounter the
anti-symmetric contribution of the generator $ \gen $ taking the form
\begin{equation}\label{eq_flux_A}
	\begin{aligned}
		\gena \eta_{x} = j_{x-1 , x}( \eta) - j_{x , x+ 1}(
		\eta)\,,
	\end{aligned}
\end{equation}
which is quadratic in the particle configuration.
To control this term, we expand it around the density $ \rho$:
\begin{equation}\label{eq_current_expansion}
	\begin{aligned}
		j_{x, x+1} ( \eta)
		= j ( \rho ) + \frac{1}{ 2} j' (\rho) \big(( \eta_{x}-
		\rho) + ( \eta_{x+1}- \rho) \big)
		+ \frac{1}{2} j''( \rho) W_{x}( \eta ) \,,
	\end{aligned}
\end{equation}
where
\begin{equation}\label{eq_def_Wandff}
	\begin{aligned}
		W_{x}( \eta) := (
		\eta_{x}- \rho)
		( \eta_{x+1}- \rho) = \tau_{x}\ff ( \eta)  \,,
		\quad
		\ff ( \eta) :=  ( \eta_{0} - \rho) (
		\eta_{1} - \rho) \,.
	\end{aligned}
\end{equation}
The identity \eqref{eq_current_expansion} can be checked by hand, after recalling \eqref{eq_def_current} and
\eqref{eq_def_j}.

It turns out that the constant $ j( \rho)$ part will not contribute since it cancels in \eqref{eq_flux_A}, while the linear
term with $ j' ( \rho)$ is precisely compensated by the moving frame in the
definition of $ u_{L}$ \eqref{eq_def_u}.
Therefore, the leading order contribution that remains is the centred quadratic local
observable $ W ( \eta)$, which is the part that will be identified as the Burgers
nonlinearity.

While the microscopic observable $W_x(\eta)$ is well defined, the difficulty lies in identifying the scaling limit of its space-time average. The density fields $u_L$ converge only as random distributions, which does not allow one to pass directly to their squares. 
 The second-order Boltzmann–Gibbs principle resolves this problem by replacing $W_x$, within the relevant space-time averages, by the centred square of a mesoscopic density average:
\begin{equation}\label{eq_def_etabar}
	\begin{aligned}
		\overline{\eta}^{\ell}_{0} :=
		\frac{1}{ \ell} \sum_{y \in \mathbf{B}_{ \ell}} \eta_{y}\,.
	\end{aligned}
\end{equation}
More precisely, the second order Boltzmann--Gibbs principle controls the error when
replacing $ W_{0} ( \eta) = \ff( \eta)  $ by the (Wick renormalised) centred field
\begin{equation}\label{eq_Q}
	\begin{aligned}
		\mQ_{\rho}( \ell , \eta)
		:=
		\big( \overline{\eta}_{0}^{\ell}- \rho \big)^{2} - \frac{\chi ( \rho)}{
			\ell} \,.
	\end{aligned}
\end{equation}

\begin{proposition}[Second order Boltzmann--Gibbs principle for the current]\label{prop_BG}
	Let $ A>0 $ and $ \rho>0 $. Then for every $\ell \geqslant 2$, $ t \geqslant 0 $,
	$ \alpha \leqslant A$, and any $ L\in\NN $:
	\begin{equation*}
		\begin{aligned}
			\mathbf E_{\rho,L}
			\bigg[
				\bigg(
				\int_{0}^{t}
				\sum_{x \in \ZZ} \tau_{x}
				\big(
				\ff ( \eta(s))
				-
				\mQ_{\rho}(\ell , \eta (s))
				\big) v_{s,x}
				\ud s
				\bigg)^2
				\bigg]
			\lesssim_{\rho,A}
			\frac{1}{ \alpha^{3} L^{2}} \Big(\frac{1}{ \alpha}
			+
			\frac{\ell^{2}}{  \alpha \ell +1 }
			+
			\frac{tL^{2}}{ \ell^2}
			\Big)
			%C_{\mathrm{BG}} \Big(
			%	\frac{\ell}{L^{2}}
			%		 + \frac{t}{\ell^{2}}
			%		\Big)
			\int_{0}^{t} \| v_{s,\cdot}
			\|^{2}_{\ell^{2}} \ud s\,,
		\end{aligned}
	\end{equation*}
	for any $ v \in C^{\infty}([0,T]; \ell^{1}( \ZZ))$.
\end{proposition}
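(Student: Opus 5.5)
The plan follows the standard second-order Boltzmann--Gibbs scheme of \cite{GoncJara14}, while tracking every $\alpha$-dependence through the moment bounds \eqref{eq_quant_mom_bound}. The main tool is the Kipnis--Varadhan (or $H_{-1}$) estimate for stationary processes: for a mean-zero local function $g$,
\begin{equation*}
\mathbf E_{\rho,L}\Big[\Big(\int_0^t \sum_x v_{s,x}\tau_x g(\eta(s))\ud s\Big)^2\Big]\lesssim \int_0^t \Big\|\sum_x v_{s,x}\tau_x g\Big\|_{-1,L}^2\ud s ,
\end{equation*}
where the $H_{-1}$ norm is taken with respect to $L^2\gens$. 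Because $\gena$ is antisymmetric by \eqref{eq_adjointness}, it drops out of this norm. I would prove this via the Dynkin martingale \eqref{eq_dynkin_mart} and forward/backward decomposition, using the stationarity and the backward martingale formulation available from Proposition~\ref{prop_ip_exists}. The remaining $H_{-1}$ norms are then bounded by a spectral gap on boxes together with integration by parts.

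I would split $\ff-\mQ_\rho(\ell,\cdot)$ into three telescoping pieces. The first replaces $\ff=(\eta_0-\rho)(\eta_1-\rho)$ by $(\eta_0-\rho)(\overline{\eta}^{\ell}_1-\rho)$, and then symmetrises to $(\overline\eta^{\ell}_0-\rho)^2-\chi/\ell$ by replacing $\eta_0$ by its block average.

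Each one-site step $(\eta_0-\rho)(\eta_y-\eta_{y+1})$ is a gradient. I write $\eta_y-\eta_{y+1}$ through the symmetric dynamics: $\EE_\rho[h(\eta)(\eta_y-\eta_{y+1})]$ equals an average of $\eta_y(\alpha+\eta_{y+1})[h(\eta^{y,y+1})-h(\eta)]$, up to a factor of order $(\alpha+\rho)^{-1}$. This comes from the change of variables $\eta\mapsto\eta^{y,y+1}$ under $\nu_\rho$, since $\rho\,w(n+1)/((\alpha+\rho)w(n))\cdot(n+1)=\rho(\alpha+n)/(\alpha+\rho)$. Cauchy--Schwarz against the Dirichlet form then costs $\EE[(\eta_0-\rho)^2/(\eta_y(\alpha+\eta_{y+1}))]$-type quantities, and it is here that negative powers of $\alpha$ appear. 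Summing over the $\ell$ bonds with overlapping supports, and using the $L^2$ prefactor, gives contributions of order $\ell/(\alpha^{k}L^2)$. These account for the first terms $\alpha^{-4}L^{-2}$ and $\alpha^{-3}L^{-2}\ell^2/(\alpha\ell+1)$; the factor $\alpha\ell+1$ reflects that the relevant variance is $\chi/\ell$ or a single-site quantity, whichever is smaller.

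The term $(\overline\eta^{\ell}_0-\rho)^2-\chi/\ell$ needs no further reduction. The real difficulty is the fluctuation of the product $(\eta_0-\rho)(\overline\eta^{\ell}_1-\rho)$ around its conditional expectation given $N_\ell$. For this I use the canonical measures $\pi_{\ell,n}$: the function minus its canonical expectation has $\pi_{\ell,n}$-mean zero, so a spectral gap for $\gens^{(\ell)}_{\rm ref}$ on $\Omega_{\ell,n}$ gives $H_{-1}$ norm $\lesssim \ell^2\var_{\ell,n}/L^2$ per box. Summing over disjoint boxes and applying the $\sqrt t$-type bound for the time integral, instead of the $H_{-1}$ one, gives the $tL^2/\ell^2$ term. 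The canonical expectation of $\ff$ must also be identified as $(\overline\eta^{\ell}-\rho)^2-\chi/\ell$ up to small errors. This is an explicit Dirichlet--multinomial computation: $\EE_{\ell,n}[\eta_0\eta_1]=n(n-1)\alpha/(\ell(\alpha\ell+1))+\dots$, with the remainder being a deterministic function of $\overline\eta^{\ell}$ whose variance is controlled by equivalence of ensembles.

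The main obstacle is the spectral gap of the inclusion process on a box of size $\ell$, uniformly in the particle number $n$ and with explicit dependence on $\alpha$. My guess is that the gap is $\gtrsim \alpha/\ell^2$, and this is the source of the $\alpha^{-3}$ prefactor. I would try to obtain it through the Beta/Dirichlet structure by comparison with the Kac-type energy model, or through duality: for symmetric inclusion, polynomials of degree $\le2$ are preserved, and only degree-two functions enter here. Indeed, every function in the decomposition is a polynomial of degree two, so it suffices to invert $\gens$ on that finite-dimensional invariant space explicitly, which bypasses a full spectral gap.
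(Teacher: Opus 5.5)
\textbf{Main gap: the single-scale spectral-gap estimate loses a factor $\ell$.} The step you call the real difficulty does not give the stated bound. The translates $\tau_x g$ of $g:=(\eta_0-\rho)(\overline\eta^{\ell}_1-\rho)-\EE_\rho[\,\cdot\,|\mF_{0,\ell+1}]$ live on overlapping boxes. The spectral-gap $H^{-1}$ bound (Proposition~\ref{prop_Hminus1_bound}) requires disjoint ones, so splitting $\sum_x v_{s,x}\tau_x g$ into $\ell+1$ disjoint families costs a factor $\ell$. Since $\EE_\rho[g^2]$ is of order $\chi(\rho)^2/\ell$, a single gap at scale $\ell$ gives
\begin{equation*}
\ell\cdot\frac{\ell^2}{\alpha(1\wedge\alpha)L^2}\cdot\frac{\chi(\rho)^2}{\ell}\,\|v_{s,\cdot}\|_{\ell^2}^2\asymp\frac{\ell^2}{\alpha^{4}L^2}\,\|v_{s,\cdot}\|_{\ell^2}^2 ,
\end{equation*}
i.e.\ $\ell^2/L^2$ instead of $\ell/L^2$ for fixed $\alpha$. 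With $\ell=\lfloor\delta L\rfloor$ and $\|v\|_{\ell^2}^2\sim L$ this blows up like $\delta^2 L$, so it is not a matter of constants.

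The missing idea is the renormalisation scheme of \cite{GoncJara14}, which is the core of the paper's proof. First, a one-block estimate for $\ff-\psi_\ff(\ell_0)$ at a fixed scale $\ell_0$ gives the $\alpha^{-4}L^{-2}$ term. Then a dyadic telescoping $\psi_\ff(2^k)\to\psi_\ff(2^{k+1})$ follows. Each increment is centred given $\mF_{x,2^{k+1}}$ by the tower property and, crucially, satisfies $\EE_\rho[|\psi_\ff(2^{k+1})-\psi_\ff(2^k)|^2]\lesssim(\alpha2^k(\alpha2^k+1))^{-1}$ by the explicit formula \eqref{eq_supp_psi} (Lemma~\ref{lem_equivensemble}). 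This decay beats the factor $2^{3k}$ from overlap and gap, and summing square roots geometrically yields $\ell^2/(\alpha^3(\alpha\ell+1)L^2)$. The $tL^2/\ell^2$ term arises only in the final static replacement $\psi_\ff(\ell)\to\mQ_\rho(\ell,\cdot)$ by Cauchy--Schwarz in time, using $\EE_\rho[|\psi_\ff(\ell)-\mQ_\rho(\ell,\cdot)|^2]\lesssim\alpha^{-3}\ell^{-3}$. It cannot come from the fluctuation $g$, whose $L^2$ norm only decays like $\ell^{-1/2}$.

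\textbf{Other steps that need repair.} Your first replacement $\eta_1\to\overline\eta^\ell_1$ by integration by parts is sound. The symmetrisation $\eta_0\to\overline\eta^\ell_0$ is not: the gradient $\eta_0-\eta_1$ multiplies $\overline\eta^\ell_1-\rho$, which is not invariant under $\eta\mapsto\eta^{0,1}$. Moreover, the outcome would be $(\overline\eta^\ell_0-\rho)(\overline\eta^\ell_1-\rho)$ rather than $\mQ_\rho(\ell,\cdot)$. Replacing $\eta_0$ instead by the average over the disjoint block to its left would work, at least for fixed $\alpha$. The product of two centred averages over disjoint blocks has variance $\chi(\rho)^2/\ell^2$, and its canonical mean on the doubled box is exactly $\psi_\ff(2\ell)$ by exchangeability. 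One gap at scale $2\ell$ then loses nothing in $\ell$.

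The integration by parts itself is exact, but with prefactor $\alpha^{-1}$, not $(\alpha+\rho)^{-1}$. Detailed balance gives $\alpha\,\EE_\rho[(\eta_y-\eta_{y+1})h]=-\EE_\rho[\eta_y(\alpha+\eta_{y+1})(h(\eta^{y,y+1})-h(\eta))]$. So the Cauchy--Schwarz cost is $\alpha^{-2}\EE_\rho[\eta_y(\alpha+\eta_{y+1})(\eta_0-\rho)^2]$, with the weight in the numerator. The quantity $\EE_\rho[(\eta_0-\rho)^2/(\eta_y(\alpha+\eta_{y+1}))]$ you wrote is infinite because $\nu_\rho(\eta_y=0)>0$.

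Finally, the gap need not be guessed. The paper imports it from Kim--Sau (Lemma~\ref{lem_sg}) as $\var_{\ell,N}(f)\le\frac{\ell^2}{4\alpha(1\wedge\alpha)}\langle f,-\gens^{(\ell)}_{\mathrm{ref}}f\rangle_{\ell,N}$, uniformly in $N$, which for $\alpha<1$ only guarantees order $\alpha^2/\ell^2$. Your duality shortcut does not reduce to a finite-dimensional problem on $\ZZ$ (it is a two-particle Poisson equation), and it is not carried out.
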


The second order Boltzmann--Gibbs principle is classically stated for general
local functions. Proposition~\ref{prop_BG} is instead current-specific which is
sufficient to derive density fluctuations.  The proof of the proposition follows the
strategy of \cite{GoncJara14}.
The main work here is to make this strategy compatible
with the asymptotically quadratic inclusion rates $ \eta_x(\alpha+\eta_y)$.
We therefore require inclusion-specific input, namely a
spectral gap estimate uniform in the block particle number \cite{KimSau}
%, moment bounds for the negative-binomial invariant measures with explicit dependence on
%$\alpha$,
and a current-specific second-order equivalence-of-ensembles
expansion for $\ff(\eta)=(\eta_0-\rho)(\eta_1-\rho)$.

The case of unbounded rates was also considered in
\cite{GoncJaraSethuraman15} for zero-range-type dynamics with linearly
controlled rates.
We also mention the work \cite{GoncJaraSimon17}, which establishes a Boltzmann--Gibbs principle for polynomial local
functions and does not require a spectral-gap input. There elliptic and
bounded weights of the Dirichlet form are necessary, which do not seem to  extend trivially to the inclusion process.

\section{Current specific second--order Boltzmann--Gibbs principle}

The proof of the Boltzmann--Gibbs principle
(Proposition~\ref{prop_BG}) follows the one--block, two--block and
renormalisation scheme of \cite[Section~4.2]{GoncJara14}.  For this we combine the general
$H^{-1}$ estimate in Corollary~\ref{cor_kp_postHminus1} below with the current--specific equivalence--of--ensembles
input of Lemma~\ref{lem_equivensemble}.
First, we will establish these two inputs, before discussing the proof of the second
order Boltzmann--Gibbs principle in
Section~\ref{sec_proof_BG}.

%\textbf{$H^{-1}$ norm.} For $ f \in L^{2}( \nu_{\rho}) $, we define the $
%H^{-1, ( L)}$--norm of $ f$ through the variational formula
%\begin{equation}\label{eq_Hminus1_L}
%		\begin{aligned}
%				\| f \|_{-1, L}^{2} :=
%				\sup_{ g \text{ local}}
%				\big\{
%					2 \langle f, g \rangle_{\rho} - L^{2} \langle g,
%					- \gens g\rangle_{\rho}\,.
%				\big\}\,.
%			\end{aligned}
%		\end{equation}
%		In fact, it will be more convenient to work directly with
%		\begin{equation}\label{eq_Hminus1}
%			\begin{aligned}
%				\| f \|_{-1}^{2}:=
%				\sup_{ g \text{ local}}
%				\big\{
%					2 \langle f, g \rangle_{\rho} - \langle g,
%					- \gens g\rangle_{\rho}\,.
%				\big\}\,.
%			\end{aligned}
%		\end{equation}
%		Notice that
%		\begin{equation*}
%			\begin{aligned}
%				\| f \|_{-1, L}^{2}= L^{-2} \| f \|_{-1}^{2}\,,
%			\end{aligned}
%		\end{equation*}
%		since $ g $ in \eqref{eq_Hminus1_L} can simply be scaled with $
%		L^{-2}$
%		and reabsorbed into the supremum.

\subsection{Spectral gap inequality}\label{sec_sg}

Denote by $ ( \eta^{(K)}(t) )_{t \geqslant 0}$ the inclusion process on the periodic torus $
	\TT_{K} = \{ -K , -K+1, \ldots, K \}$ which approximates $ (\eta(t))_{t \geqslant 0 }$, see Appendix~\ref{app_asip}.
We write
$\gen_L^{(K)}$ for the corresponding periodic generator on $\TT_K$, and define
its symmetric part by
\begin{equation}\label{eq_def_sym_K}
	\gens^{(K)}
	:=
	\frac{1}{2L^2}
	\big(\gen_L^{(K)}+(\gen_L^{(K)})^*\big).
\end{equation}
Furthermore, we denote by $ \nu_{\rho}^{\TT_K}$ the product measure on $ \NN_0^{\TT_K}$.
Then, for $h\in L^2(\nu_\rho^{\TT_K})$, define the usual finite-volume variational norm
\begin{equation}\label{eq_def_hminus1_finitevol}
	\|h\|_{-1,\TT_K}^2
	:=
	\sup_g
	\big\{
	2\langle h,g\rangle_{\rho,\TT_K}
	-
	\langle g,-\gens^{(K)}g\rangle_{\rho,\TT_K}
	\big\},
\end{equation}
where the supremum is taken over functions $ g \in L^{2}( \nu_{\rho}^{\TT_K})$ for which $ \langle
	g,-\gens^{(K)}g\rangle_{\rho,\TT_K}< \infty$.

The Kipnis--Varadhan inequality  (Lemma~\ref{lem_local_kv_finite_volume})
provides a bound of additive functionals of the form
\begin{equation*}
	\begin{aligned}
		\mathbf E_{\rho,L}
		\left[
			\sup_{t \in [0,T]}
			\left|
			\int_0^t F(s,\eta(s))\ud s
			\right|^2
			\right]
		\leq
		C_{\mathrm{KV}} L^{-2}
		\liminf_{K\to\infty}
		\int_0^T
		\|F(s,\cdot)\|_{-1,\TT_K}^2\ud s\,.
	\end{aligned}
\end{equation*}
For this estimate to become useful, we need to
establish suitable bounds on the $ \| \cdot \|_{-1,\TT_K}$--norm. As in the
original work \cite{GoncJara14}, we use a spectral gap inequality (or Poincar\'e
inequality), which was recently established for the inclusion process~\cite{KimSau}.

\begin{lemma}[Spectral gap inequality]\label{lem_sg}
	For every $ \ell \geqslant 2 $, $ N \in \NN $ and $ f : \Omega_{\ell,N} \to \RR $, we have
	\begin{equation*}
		\begin{aligned}
			\var_{\ell, N} (f)
			\leqslant
			\frac{\ell^2}{4\alpha(1\wedge\alpha)}
			\langle f, - \gens_{\mathrm{ref}}^{( \ell )}
			f\rangle_{\ell , N}\,,
		\end{aligned}
	\end{equation*}
	where $ \gens_{\mathrm{ref}}^{( \ell )}$ generates the reflected 
	inclusion process on the segment $ \mathbf{B}_{\ell}$, see \eqref{eq_Srefell}.
\end{lemma}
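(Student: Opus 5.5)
The plan is to take the uniform-in-$N$ spectral gap for the symmetric inclusion process from \cite{KimSau} and only translate it into the normalisation of \eqref{eq_Srefell}. No new probabilistic argument is needed. The steps are: identify the reference dynamics, quote the gap bound, bound the resulting eigenvalue of a single-particle walk, and check that the constant is uniform in $N$.

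First I match conventions. In $\gens_{\mathrm{ref}}^{(\ell)}$ the sum runs over ordered nearest-neighbour pairs in $\mathbf{B}_{\ell}$ with prefactor $\tfrac12$. So along each edge $\{x,x+1\}$ a particle moves in each direction at rate $\tfrac12\eta_x(\alpha+\eta_y)$. This is the reflected symmetric inclusion process on a path of $\ell$ sites with edge conductances $\tfrac12$ and inclusion parameter $\alpha$. Its invariant measure on $\Omega_{\ell,N}$ is the Dirichlet–multinomial law $\pi_{\ell,N}$ of \eqref{eq_def_canonical}, and the process is reversible there. Hence $\langle f,-\gens_{\mathrm{ref}}^{(\ell)}f\rangle_{\ell,N}$ is the Dirichlet form
\begin{equation*}
\tfrac14\sum_{|x-y|=1}\EE_{\ell,N}\big[\eta_x(\alpha+\eta_y)(f(\eta^{x,y})-f(\eta))^2\big].
\end{equation*}
The claim is therefore equivalent to the lower bound
\begin{equation*}
\mathrm{gap}\big(\gens_{\mathrm{ref}}^{(\ell)}\big|_{\Omega_{\ell,N}}\big)\geqslant 4\alpha(1\wedge\alpha)\,\ell^{-2},
\end{equation*}
uniformly in $N$.

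Next I invoke \cite{KimSau}. As I understand it, that paper compares the gap of the inclusion process on a graph with that of the one-particle walk, in an Aldous-type manner. A single particle ($N=1$) jumps across an edge at rate $\tfrac12\alpha$. Its generator is $\alpha$ times half the discrete Neumann Laplacian on the path, whose gap is $\alpha(1-\cos(\pi/\ell))$. In the regime $\alpha\geqslant1$ I expect the gap identity to hold with this value. For $\alpha<1$ I expect only a comparison inequality in which a factor of order $\alpha$ is lost; this would explain the $1\wedge\alpha$ in the statement. In either case the elementary bound $1-\cos x\geqslant 2x^2/\pi^2$ for $x\in[0,\pi]$ gives $1-\cos(\pi/\ell)\geqslant 2\ell^{-2}$. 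Combined with the constant from \cite{KimSau}, this should produce the stated bound with room to spare in the numerical factor.

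The main obstacle is the bookkeeping with \cite{KimSau}. I must check that their rates are $\eta_x(\alpha+\eta_y)$ rather than $\eta_x(\alpha+\eta_y)/\alpha$ or a scaled variant, so that the factor $\alpha$ lands correctly. I must also check that their result covers the path with reflecting boundary and all $\alpha>0$, not only $\alpha\geqslant1$. If for small $\alpha$ their theorem is stated only on the complete graph or with a different constant, I would first try a path-comparison (canonical path) argument. This reduces the path-graph Dirichlet form to the mean-field one at a cost $\ell^2$ and uses the mean-field gap for small $\alpha$ from \cite{KimSau}, accepting the extra factor $\alpha\wedge1$. Uniformity in $N$ is the essential input and is exactly what that reference supplies.
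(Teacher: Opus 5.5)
Your route is the paper's route: quote \cite{KimSau}, Theorem~1.1, and bound the single-walker gap $\alpha(1-\cos(\pi/\ell))$, which comes from the eigenvalues $\cos(\pi i/\ell)$ of the reflected walk. For the reflected process on $\mathbf{B}_\ell$ that theorem gives $\mathrm{gap}_{\mathrm{SIP}}(\ell,\alpha)/\mathrm{gap}_{\mathrm{RW}}(\ell,\alpha)\in[1\wedge\alpha,1]$ for every $\alpha>0$, with the infimum over $N$ already built in. Your normalisation (rate $\alpha/2$ per directed edge, Dirichlet form with prefactor $\tfrac14$) matches the paper's. Your worry about small $\alpha$ is therefore unfounded. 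The canonical-path fallback is not needed, and it would not produce the sharp constant anyway.

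The step that fails is the final numerical one. The bound $1-\cos x\geqslant 2x^2/\pi^2$ only gives $\mathrm{gap}_{\mathrm{RW}}\geqslant 2\alpha/\ell^2$. That yields $\var_{\ell,N}(f)\leqslant \frac{\ell^2}{2\alpha(1\wedge\alpha)}\langle f,-\gens_{\mathrm{ref}}^{(\ell)}f\rangle_{\ell,N}$, which is a factor $2$ weaker than the statement. There is no room to spare. What you need is $1-\cos(\pi/\ell)\geqslant 4/\ell^2$, and at $\ell=2$ this holds with equality, since $1-\cos(\pi/2)=1=4/4$.

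The fix uses $\ell\geqslant 2$, so that $x=\pi/\ell\in(0,\pi/2]$. Write $1-\cos x=2\sin^2(x/2)$. Concavity of $\sin$ on $[0,\pi/4]$ gives $\sin y\geqslant \frac{2\sqrt2}{\pi}\,y$ there, so $1-\cos x\geqslant 4x^2/\pi^2$ on $[0,\pi/2]$. Hence $\mathrm{gap}_{\mathrm{RW}}(\ell,\alpha)\geqslant 4\alpha/\ell^2$, which gives exactly the constant $\frac{\ell^2}{4\alpha(1\wedge\alpha)}$.
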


\begin{proof}
	For the reflected (attempted jumps outside of $ \mathbf{B}_{\ell}$ are suppressed) inclusion process
	\cite[Theorem~1.1]{KimSau} establishes that
	\begin{equation}\label{eq_gap}
		\begin{aligned}
			\frac{\mathrm{gap}_{\mathrm{SIP}}( \ell, \alpha)}{
			\mathrm{gap}_{\mathrm{RW}}( \ell , \alpha)} \in [1 \wedge \alpha,
				1]\,,
		\end{aligned}
	\end{equation}
	with the spectral gap defined in terms of
	\begin{equation*}
		\begin{aligned}
			\mathrm{gap}_{\mathrm{SIP}}( \ell, \alpha)
			:= \inf_{N \in \NN} \inf_{\substack{ f: \Omega_{\ell,N} \to
			\RR \\ \text{not constant}}}
			\frac{\langle f, - \gens_{\mathrm{ref}}^{( \ell )} f\rangle_{\ell,N }}{\var_{\ell,N} (f)} \,,
		\end{aligned}
	\end{equation*}
	and $\mathrm{gap}_{\mathrm{RW}}$ defined analogously with $ \mS $ replaced by the
	 generator of a single random walker with speed $ \alpha$ and $ \pi_{\ell ,N}$ replaced by the
	uniform measure on the line segment $ \mathbf{B}_{\ell}$.

	In particular, \eqref{eq_gap} implies that for every $ N \in \NN$
	\begin{equation*}
		\begin{aligned}
			\var_{\ell, N} (f)
			\leqslant
			\frac{1}{ 1 \wedge \alpha}
			\frac{1}{\mathrm{gap}_{\mathrm{RW}}( \ell , \alpha)}
			\langle f, - \gens_{\mathrm{ref}}^{( \ell )} f\rangle_{\ell,N }\,.
		\end{aligned}
	\end{equation*}
	Next, denote by $P$ the discrete time transition matrix of a symmetric random
	walk on $ \mathbf{B}_{\ell}$ with holding probability $\tfrac{1}{2}$ at the
	boundaries. The eigenvalues of $P$ are explicitly
	given in terms of
	\begin{equation*}
		\begin{aligned}
			\cos{\big( \tfrac{ \pi i }{\ell} \big)} \,, \quad i =0, \ldots,
			\ell -1\,,
		\end{aligned}
	\end{equation*}
	see for example \cite[Example~12.11]{LPW17}.
	Now, because $\alpha (P- \mathrm{id})$
	generates the continuous time (reflected) random walk of speed $ \alpha$, we have
	\begin{equation*}
		\begin{aligned}
			\mathrm{gap}_{\mathrm{RW}}( \ell , \alpha)
			=
			\alpha \Big(
			1- \cos{\big( \tfrac{\pi}{\ell} \big)}
			\Big)
			\geqslant
			\frac{4 \alpha}{\ell^{2}}
			%\frac{ \alpha \pi^{2}}{2 \ell^{2}}
			\,, \quad \forall \ell \geqslant 2\,.
		\end{aligned}
	\end{equation*}
	%It is only left to argue that we can replace the generator $
	%\gens_{\mathrm{ref}}^{( \ell )}$ in \eqref{eq_supp1_sg} with $ \gens^{(K)}$, that
	%is because
	%opening up further jump rates along edges only increases the Dirichlet energy,
	%see e.g. \eqref{eq_qv}.
	%Hence, $ \langle f, - \gens_{\mathrm{ref}}^{( \ell )} f\rangle_{\ell,N }
	%\leqslant \langle f, - \gens^{(K)} f\rangle_{\ell,N }$,
	This concludes the proof of the lemma.
\end{proof}

The spectral gap inequality allows us to derive an upper bound on the
$ H^{-1}$-norm.
First, we define
the $ \sigma$--algebra
\begin{equation*}
	\begin{aligned}
		\mF_{x_{0},\ell}
		:=
		\sigma \big( N_{x_{0}, \ell} ( \eta) , \eta_{y} ; y \notin \mathbf{B}
		_{x_{0},\ell}\big)\,,
	\end{aligned}
\end{equation*}
which contains information about the particle configuration outside the box and
the total number of particles present within the box.
Whenever clear from context, we will use this notation for both the periodic process and the
infinite-volume process, without distinction.
Thus, for any local function  $ f$ supported on the box $
	\mathbf{B}_{x_{0} , \ell}$, we have
\begin{equation*}
	\begin{aligned}
		\EE_{\rho} [ f | \mF_{x_{0}, \ell}]
		=
		\EE_{ \ell , N_{x_{0}, \ell}}[ f ] \,,
	\end{aligned}
\end{equation*}
as well as $ \EE_{\rho}^{(K)} [ f | \mF_{x_{0}, \ell}]
	=
	\EE_{ \ell , N_{x_{0}, \ell}}[ f ]$ for $K$ large enough.
Here the expectation on the right--hand side is with respect to the
canonical distribution $ \pi_{\ell , N_{x_{0}, \ell}}$ on $
	\mathbf{B}_{x_{0}, \ell}$.

\begin{proposition}\label{prop_Hminus1_bound}
	Let $ m \in \NN $ and $ \mathbf{B}_{x_{i}, \ell_{i}} \subset \ZZ$ be
	disjoint
	blocks of length $ \ell_{i} \geqslant 2$, $ i =1, \ldots, m$.
	For every $ i =1, \ldots, m $, let
	$f_{i} \in L^{2}( \nu_{\rho}) $ be
	supported on $
		\mathbf{B}_{x_{i}, \ell_{i}}$ such that
	$\EE_{\rho}
		[ f_{i}| \mF_{x_{i}, \ell_{i}}] = 0$.
	Then for every $K \in \NN$ large enough
	\begin{equation*}
		\begin{aligned}
			\Big\| \sum_{i =1}^{m} f_{i}\Big\|_{-1,\TT_K}^{2}
			\leqslant
			\frac{1}{4\alpha(1\wedge\alpha)}
			\sum_{i =1}^{m} \ell_{i}^{2}
			\langle f_{i}, f_{i} \rangle_{\rho,\TT_K}\,.
		\end{aligned}
	\end{equation*}
\end{proposition}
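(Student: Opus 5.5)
The plan is the standard variational argument from \cite{GoncJara14}, with the spectral gap of Lemma~\ref{lem_sg} as the only model-specific input. Fix a test function $g$ with $\langle g,-\gens^{(K)}g\rangle_{\rho,\TT_K}<\infty$. Take $K$ large enough that all blocks $\mathbf{B}_{x_i,\ell_i}$ lie in $\TT_K$ and do not wrap around, and that the conditional expectations reduce to canonical ones. Since $\EE_\rho[f_i\mid\mF_{x_i,\ell_i}]=0$, we may replace $g$ by $g-\EE[g\mid\mF_{x_i,\ell_i}]$ inside each term:
\begin{equation*}
	\langle f_i,g\rangle_{\rho,\TT_K}
	=
	\EE^{(K)}_\rho\Big[\EE_{\ell_i,N_{x_i,\ell_i}}\big[f_i\,\big(g-\EE_{\ell_i,N_{x_i,\ell_i}}[g]\big)\big]\Big].
\end{equation*}
Here the inner canonical expectation is taken in the block variables, with $N_{x_i,\ell_i}$ and the outside configuration frozen. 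We then apply Cauchy--Schwarz under $\pi_{\ell_i,N}$, followed by Lemma~\ref{lem_sg}, to $g$ viewed as a function on $\Omega_{\ell_i,N}$ with the outside frozen:
\begin{equation*}
	\EE_{\ell_i,N}\big[f_i(g-\EE_{\ell_i,N} g)\big]
	\leqslant
	\EE_{\ell_i,N}[f_i^2]^{1/2}\Big(\frac{\ell_i^2}{4\alpha(1\wedge\alpha)}\langle g,-\gens^{(\ell_i)}_{\rm ref}g\rangle_{\ell_i,N}\Big)^{1/2}.
\end{equation*}

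Next we take the outer expectation and use Cauchy--Schwarz once more, so that
\begin{equation*}
	\langle f_i,g\rangle_{\rho,\TT_K}\leqslant \|f_i\|_{L^2}\Big(\tfrac{\ell_i^2}{4\alpha(1\wedge\alpha)}D_i(g)\Big)^{1/2},
\end{equation*}
where
\begin{equation*}
	D_i(g):=\EE^{(K)}_\rho\big[\langle g,-\gens^{(\ell_i)}_{\rm ref}g\rangle_{\ell_i,N_{x_i,\ell_i}}\big]
\end{equation*}
is the part of the Dirichlet form of $\nu_\rho^{\TT_K}$ coming from bonds inside $\mathbf{B}_{x_i,\ell_i}$. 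This identification holds because $\nu_\rho^{\TT_K}$ disintegrates into the canonical measures on the block times the law of the remaining variables.

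Then we use Young's inequality $2ab\leqslant c_i a^2+c_i^{-1}b^2$ with $c_i=\ell_i^2/(4\alpha(1\wedge\alpha))$:
\begin{equation*}
	2\langle f_i,g\rangle\leqslant \tfrac{\ell_i^2}{4\alpha(1\wedge\alpha)}\langle f_i,f_i\rangle+D_i(g).
\end{equation*}
Summing over $i$, it remains to show that $\sum_i D_i(g)\leqslant\langle g,-\gens^{(K)}g\rangle_{\rho,\TT_K}$. This holds because $\langle g,-\gens^{(K)}g\rangle$ is a sum of nonnegative bond contributions
\begin{equation*}
	\tfrac12\EE\big[\eta_x(\alpha+\eta_y)(g(\eta^{x,y})-g(\eta))^2\big]
\end{equation*}
over nearest-neighbour pairs of $\TT_K$ (the weak asymmetry disappears after symmetrisation, as in \eqref{eq_sym}). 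The blocks are disjoint, so their interior bond sets are disjoint, and dropping the remaining bonds only decreases the form. The supremum in \eqref{eq_def_hminus1_finitevol} then gives the claim.

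The main obstacle is the bookkeeping in the identification of $D_i(g)$. One must check that the symmetric periodic generator restricted to the bonds inside a block, averaged against $\nu_\rho^{\TT_K}$, equals the $\nu_\rho$-average over $N$ and the outside configuration of the canonical Dirichlet forms $\langle g,-\gens^{(\ell_i)}_{\rm ref}g\rangle_{\ell_i,N}$, with the same factor $\tfrac12$ and the same rates $\eta_x(\alpha+\eta_y)$. One must also handle integrability, namely that $g$ has finite Dirichlet form and $f_i\in L^2$, so that all conditional quantities are finite almost surely. I would first verify this on each bond using the product structure of $\nu_\rho$ and the fact that jumps inside a block preserve $N_{x_i,\ell_i}$.
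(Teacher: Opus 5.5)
Your proposal is correct and follows essentially the same argument as the paper. The shared steps are: use orthogonality of $f_i$ to $\mF_{x_i,\ell_i}$-measurable functions to replace $g$ by $g-\EE[g\mid\mF_{x_i,\ell_i}]$; apply Cauchy--Schwarz and the spectral gap of Lemma~\ref{lem_sg}; use Young's inequality with the same constant $\ell_i^2/(4\alpha(1\wedge\alpha))$; and bound the sum of the disjoint block Dirichlet forms by the periodic one, $\langle g,-\gens^{(K)}g\rangle_{\rho,\TT_K}$. The only difference is cosmetic: you apply Cauchy--Schwarz first under the canonical measure and then in the outer expectation, whereas the paper applies it once globally to obtain $\EE[\var_{\ell,N}(g)]$.
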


The proof is almost analogous to \cite[Proposition~3.4]{GoncJara14}, however,
with a slightly adjusted argument since the particle number on a box is unbounded
(see also \cite[Lemma~4.3]{GoncJaraSethuraman15}).
We present the proof for completeness.

\begin{proof}
	Let $K$ be large enough such that the blocks embed into $ \TT_{K}$ without wrapping
	around the periodic boundary.
	First, consider the case $ m =1$ (for convenience we assume $
		x_{1}= 0 $ and write $f = f_{1}$ and $ \ell = \ell_{1}$).
	Then for every local function $ g : \NN_{0}^{\TT_{K}} \to \RR $
	\begin{equation*}
		\begin{aligned}
			\langle f, g \rangle_{\rho,\TT_{K}}
			=
			\EE_{\rho}^{(K)}[ f g ]
			=
			\EE_{\rho}^{(K)}\big[ f ( g - \EE_{\rho}^{(K)}[ g | \mF_{0,\ell}]
			)\big]\,,
		\end{aligned}
	\end{equation*}
	using the tower property and that  $ \EE_{\rho}^{(K)}[ f \EE_{\rho}^{(K)}[ g | \mF_{0 , \ell}] | \mF_{0 , \ell}]
		=0$ by the assumption on $ f$. Thus, the Cauchy--Schwarz inequality
	implies
	\begin{equation*}
		\begin{aligned}
			\big| \langle f, g \rangle_{\rho,\TT_{K}}\big|
			\leqslant
			\EE_{\rho}^{(K)}[ f^{2}]^{\frac{1}{ 2}}
			\EE_{\rho}^{(K)} [ \var_{\rho}^{(K)}( g | \mF_{0 ,
				\ell})]^{\frac{1}{2}} =
			\EE_{\rho}^{(K)}[ f^{2}]^{\frac{1}{ 2}}
			\EE_{\rho}^{(K)}[
			\var_{\ell , N_{0 , \ell}}(g)
			]^{\frac{1}{2}}\,.
		\end{aligned}
	\end{equation*}
	Now, applying first the spectral gap inequality from Lemma~\ref{lem_sg} and
	then Young's product inequality, we see that
	\begin{equation}\label{eq_supp_sg_new}
		\begin{aligned}
			2 \langle f, g \rangle_{\rho,\TT_{K}}
			 & \leqslant
			2
			\EE_{\rho}^{(K)}[ f^{2}]^{\frac{1}{ 2}}
			\bigg(
			\frac{\ell^2}{4\alpha(1\wedge\alpha)}
			\EE_{\rho}^{(K)}[\langle g, -
			\gens^{(\ell)}_{\mathrm{ref}} g \rangle_{\ell , N_{0 ,
						\ell}}]
			\bigg)^{\frac{1}{2}} \\
			 & \leqslant
			%\frac{1}{\lambda}
			\frac{\ell^2}{4\alpha(1\wedge\alpha)}
			\EE_{\rho}^{(K)}[ f^{2}]
			+
			%\lambda
			\langle g, -\gens^{(K)} g \rangle_{ \rho,\TT_{K}} \,,
		\end{aligned}
	\end{equation}
	where
	we also used that
	the finite--volume reflective Dirichlet form is bounded
	by the corresponding finite--volume periodic Dirichlet form:
	\begin{equation*}
		\begin{aligned}
			\EE_{\rho}^{(K)}\big[
			\langle g, -\gens^{(\ell)}_{\mathrm{ref}} g \rangle_{\ell , N_{0 , \ell}}\big]
			 & =
			\frac{1}{4}\EE_{\rho}^{(K)}\bigg[
			\sum_{\substack{x,y \in \mathbf{B}_{\ell}   \\ |x-y|=1}}
				\eta_x(\alpha+\eta_y)
				\big(g(\eta^{x,y})-g(\eta)\big)^2
			\bigg]                                      \\
			 & \leqslant
			\frac{1}{4}\EE_{\rho}^{(K)}\bigg[
			\sum_{\substack{x \in \TT_{K} \,, y \in \ZZ \\ |x-y|=1}}
				\eta_x(\alpha+\eta_y)
				\big(g(\eta^{x,y})-g(\eta)\big)^2
				\bigg]
			=
			\langle g, -\gens^{(K)} g \rangle_{ \rho, \TT_{K}}\,,
		\end{aligned}
	\end{equation*}
	where $y$ in the second sum is interpreted in the periodic sense in $
		\TT_{K}$.
	Consequently, by rearranging the inequality \eqref{eq_supp_sg_new} and taking
	the supremum with respect to $ g$, we have
	\begin{equation*}
		\begin{aligned}
			\| f \|_{-1, \TT_{K}}^{2}
			= \sup_{g}
			\big\{
			2 \langle f, g \rangle_{\rho,\TT_{K}}
			-
			\langle g, -\gens^{(K)} g \rangle_{ \rho,\TT_{K}}
			\big\}
			\leqslant
			\frac{\ell^2}{4\alpha(1\wedge\alpha)}
			\EE_{\rho}^{(K)}[ f^{2}]\,,
		\end{aligned}
	\end{equation*}
	which concludes the proof in the case $ m =1$.

	For general $ m \in \NN$, we apply the preceding one-block estimate to each
	$f_{i}$ individually
	\begin{equation*}
		\begin{aligned}
			2 \sum_{i =1}^{m}
			\langle f_{i} , g \rangle_{\rho,\TT_{K}}
			\leqslant
			\sum_{i =1}^{m}
			\frac{\ell_i^2}{4\alpha(1\wedge\alpha)}
			\EE_{\rho}^{(K)}[ f_{i}^{2}]
			+
			%\lambda
			\sum_{i =1}^{m}
			\EE_{\rho}^{(K)}[ \langle g, -\gens^{( \ell_{i})}_{\rm ref} g \rangle_{\ell_{i} ,
				N_{ x_{i} , \ell_{i}}}]
			\,,
		\end{aligned}
	\end{equation*}
	and
	use the fact that the blocks $
		\mathbf{B}_{x_{i}, \ell_{i}}$ are disjoint which yields together with
	the comparison estimate above
	\begin{equation*}
		\begin{aligned}
			\EE_{\rho}^{(K)}\bigg[
				\sum_{i =1}^{m}
				\langle g, -\gens^{(\ell_{i})}_{\rm ref} g \rangle_{\ell_{i} ,
					N_{ x_{i} , \ell_{i}}}\bigg]
			\leqslant  \langle g, - \gens^{(K)} g \rangle_{\rho,\TT_{K} }\,.
		\end{aligned}
	\end{equation*}
	Again, rearranging and taking the supremum with respect to $ g$ concludes
	the proof.
\end{proof}

Combining Lemma~\ref{lem_local_kv_finite_volume} and
Proposition~\ref{prop_Hminus1_bound},
we conclude the improved Kipnis--Varadhan inequality, representing the first (out
of two) relevant inputs for the Boltzmann--Gibbs principle.

\begin{corollary}\label{cor_kp_postHminus1}
	Let $ m \in \NN $ and $ \mathbf{B}_{x_{i}, \ell_{i}} \subset \ZZ$ disjoint
	blocks of length $ \ell_{i} \geqslant 2$, $ i =1, \ldots, m$.
	For every $ i =1, \ldots, m $, let $ F_{i} \in  C^{\infty}([0,T];
		L^{2}( \nu_{\rho}))$ be  supported on $ \mathbf{B}_{x_{i}, \ell_{i}}$ satisfying $
		\EE_{\rho} [ F_{i}(t, \eta) | \mF_{x_{i}, \ell_{i}}] = 0 $, for almost every $t \in [0,T]$.
	Then
	%for every $ t \in [0,T] $
	\begin{equation*}
		\begin{aligned}
			\mathbf E_{\rho,L}\bigg[
			\sup_{ t \in [0, T]}
			\bigg|
			\int_{0}^{t}	\sum_{i =1}^{m} F_{i}( s , \eta (s)) \ud
			s
			\bigg|^{2}
			\bigg]
			\leqslant
			C_{\mathrm{KV}} L^{-2}
			\sum_{i =1}^{m}
			\frac{\ell_i^2}{4\alpha(1\wedge\alpha)}
			\int_{0}^{ T}
			\langle F_{i}(s, \cdot), F_{i}(s, \cdot)
			\rangle_{\rho} \ud s\,.
		\end{aligned}
	\end{equation*}
\end{corollary}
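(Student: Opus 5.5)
The corollary follows by applying the Kipnis--Varadhan inequality of Lemma~\ref{lem_local_kv_finite_volume} to the time-dependent function $F(s,\eta) := \sum_{i=1}^m F_i(s,\eta)$. This gives
\begin{equation*}
	\mathbf E_{\rho,L}\bigg[\sup_{t\in[0,T]}\Big|\int_0^t F(s,\eta(s))\ud s\Big|^2\bigg]
	\leqslant C_{\mathrm{KV}}L^{-2}\liminf_{K\to\infty}\int_0^T \|F(s,\cdot)\|_{-1,\TT_K}^2\ud s .
\end{equation*}
We then bound the integrand pointwise in $s$ using Proposition~\ref{prop_Hminus1_bound}.

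First we check the hypotheses of that proposition for almost every fixed $s$. Each $F_i(s,\cdot)$ is supported on $\mathbf{B}_{x_i,\ell_i}$ and lies in $L^2(\nu_\rho)$. The blocks are disjoint. The centring $\EE_\rho[F_i(s,\cdot)\mid\mF_{x_i,\ell_i}]=0$ holds by assumption. We also need the same centring under $\nu_\rho^{\TT_K}$. This holds once $K$ is so large that every block sits inside $\TT_K$ without wrapping, because the conditional law inside a block given $\mF_{x_i,\ell_i}$ is then the canonical measure $\pi_{\ell_i,N_{x_i,\ell_i}}$ under both product measures. The proposition therefore gives, for all $K$ large enough (uniformly in $s$, since the blocks do not depend on $s$),
\begin{equation*}
	\|F(s,\cdot)\|_{-1,\TT_K}^2\leqslant \frac{1}{4\alpha(1\wedge\alpha)}\sum_{i=1}^m \ell_i^2\langle F_i(s,\cdot),F_i(s,\cdot)\rangle_{\rho,\TT_K}.
\end{equation*}

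Next we remove the dependence on $K$. Each $F_i(s,\cdot)$ depends only on coordinates in its block, and $\nu_\rho^{\TT_K}$ and $\nu_\rho$ have the same marginal there. Hence $\langle F_i(s,\cdot),F_i(s,\cdot)\rangle_{\rho,\TT_K}=\langle F_i(s,\cdot),F_i(s,\cdot)\rangle_\rho$ for all large $K$. The right-hand side above is therefore independent of $K$, and we may integrate in $s$ and take the $\liminf$ trivially. Integrability in $s$ comes from $F_i\in C^\infty([0,T];L^2(\nu_\rho))$. This yields the claimed bound.

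The main point needing care is the interface with Lemma~\ref{lem_local_kv_finite_volume}. We must make sure its hypotheses on $F$ (regularity in time, square integrability) are met by a finite sum of block-supported $L^2$ functions. We must also make sure the centring under the infinite-volume measure transfers to the periodic one. The latter is the identification of conditional expectations with canonical expectations already noted before Proposition~\ref{prop_Hminus1_bound}, so I expect no genuine obstacle.
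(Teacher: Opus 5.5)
Your proposal is correct and follows the paper's proof: Lemma~\ref{lem_local_kv_finite_volume} (with the factor $L^{-2}$ coming from the time acceleration), Proposition~\ref{prop_Hminus1_bound} applied for all large $K$, and the agreement of the $\nu_\rho^{\TT_K}$- and $\nu_\rho$-marginals on the blocks. The one hypothesis you leave implicit is that Lemma~\ref{lem_local_kv_finite_volume} requires centring with respect to the total mass $\sum_{x\in\TT_K}\eta_x$; it follows from your block centring by the tower property, since this total mass is $\mF_{x_i,\ell_i}$-measurable, and this is exactly how the paper checks it.
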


\begin{proof}
	Let $K$ be large enough that the blocks are embedded in $\TT_K$ without
	wrapping and remain disjoint, we can apply
	Proposition~\ref{prop_Hminus1_bound} which
	gives
	\begin{equation*}
		\Big\|\sum_{i=1}^{m}F_i(s,\cdot)\Big\|_{-1,\TT_K}^{2}
		\leqslant
		\frac{1}{4\alpha(1\wedge\alpha)}
		\sum_{i=1}^{m}\ell_i^2
		\langle F_i(s,\cdot),F_i(s,\cdot)\rangle_{\rho,\TT_K}\,,
	\end{equation*}
	for all such $K$ and almost every $s$.
	Moreover, for $K$ large enough the $\nu_\rho^{\TT_K}$-marginal on the union of the
	blocks agrees with the infinite-volume $\nu_\rho$-marginal, so
	\begin{equation*}
		\langle F_i(s,\cdot),F_i(s,\cdot)\rangle_{\rho,\TT_K}
		=
		\langle F_i(s,\cdot),F_i(s,\cdot)\rangle_{\rho}\,.
	\end{equation*}
	It is only left to apply Lemma~\ref{lem_local_kv_finite_volume}, which we
	can do because
	for
	$K$ large enough
	\begin{equation*}
		\begin{aligned}
			\EE_{\rho}^{(K)}\bigg[
				F_{i}(s, \eta) \bigg| \sum_{x \in \TT_{K}} \eta_{x}
				\bigg]
			=
			\EE_{\rho}^{(K)}\bigg[
				\EE_{\rho}\big[
					F_{i}(s, \eta) \big| \mF_{x_{i} ,\ell_{i}}\big] \bigg| \sum_{x \in \TT_{K}} \eta_{x}
				\bigg]
			=0\,,
		\end{aligned}
	\end{equation*}
	for almost every $ s$, by the tower property.
	Moreover, we pick up an extra $ L^{-2}$-factor in front of the $H^{-1}$-norm which is an artefact of the speed-up in
	\eqref{eq_genIP}:
	\begin{equation*}
		\begin{aligned}
			\sup_g
			\big\{
			2\langle f,g\rangle_{\TT_K,N}
			-
			L^2\langle g,-\gens^{(K)}g\rangle_{\TT_K,N}
			\big\}
			 & =
			L^{-2}
			\sup_g
			\big\{
			2\langle f,g\rangle_{\TT_K,N}
			-
			\langle g,-\gens^{(K)}g\rangle_{\TT_K,N}
			\big\}\,.
		\end{aligned}
	\end{equation*}
	This finishes the proof.
\end{proof}

Note that Corollary~\ref{cor_kp_postHminus1} is the local-polynomial
version of the input used for the weakly asymmetric simple exclusion
process \cite[Corollary~3.5]{GoncJara14}.

\subsection{Equivalence of ensembles}\label{sec_equivens}

We restrict ourselves to the box $ \mathbf{B}_{
	\ell_{0}}$, $\ell_{0} \geq 2$, and the local function $ \ff (\eta) = ( \eta_{1}- \rho) ( \eta_{0}- \rho) $.
For every $ \ell \geqslant \ell_{0}$, we define the centred conditional expectation (which is independent of $ \ell_{0}$)
\begin{equation}\label{eq_psi}
	\begin{aligned}
		\psi_{\ff}( \ell ; \eta) := \EE_{\rho}[ \ff|
			\overline{\eta}^{\ell}_{0}]
		=
		\EE_{ \ell , N_{ 0 , \ell}}[ \ff]
		\,,
	\end{aligned}
\end{equation}
where $ \overline{\eta}^{\ell}_{0}$ was defined in \eqref{eq_def_etabar}.
The random variable $ \psi_{\ff}$ is the best possible guess of the mean behaviour of $
	\ff$
under $ \nu_{\rho}$ conditioned on the number of particles inside the box.

\begin{lemma}[Second order equivalence of ensembles for the quadratic current]\label{lem_equivensemble}
	Let $ A \in (0, \infty) $.
	For every $ \ell \geqslant 2$ and $ \alpha \leqslant A $
	\begin{equation*}
		\begin{aligned}
			\EE_{\rho}\Big[
			\big|
			\psi_{\ff}( \ell ; \eta) \big|^{2}
			\Big]
			\lesssim_{\rho,A}
			\frac{1}{ \alpha \ell ( \alpha \ell +1)}\,,
			%\red{\alpha^{-6}\ell^{-2} }
			\qquad
			\EE_{\rho}\Big[
			\big|
			\psi_{\ff}( 2\ell ; \eta)
			-  \psi_{\ff}( \ell ; \eta) \big|^{2}
			\Big]
			\lesssim_{\rho,A}
			\frac{1}{ \alpha \ell ( \alpha \ell +1)}
			%\red{\alpha^{-6}\ell^{-2}}
			\,,
		\end{aligned}
	\end{equation*}
	\begin{equation*}
		\begin{aligned}
			\text{and} \quad
			\EE_{\rho}\Big[
			\big|
			\psi_{\ff}( \ell ; \eta)
			-  \mQ_{\rho}( \ell , \eta)\big|^{2}
			\Big]
			\lesssim_{\rho, A}
			\frac{1}{ \alpha^{3} \ell^{3}}
			%\red{\alpha^{-6}\ell^{-3}}
			\,,
		\end{aligned}
	\end{equation*}
	where  $ \mQ_{\rho}$ was defined
	in \eqref{eq_Q}.
\end{lemma}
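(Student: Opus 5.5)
The plan is to compute $\psi_{\ff}$ in closed form and reduce all three bounds to explicit moments of a single negative binomial variable. Under $\nu_\rho$ the block sum $N=N_{0,\ell}$ is $\mathrm{NB}(\alpha\ell,\tfrac{\alpha}{\alpha+\rho})$, with mean $\rho\ell$. Given $N=n$, the configuration in $\mathbf B_\ell$ is Dirichlet–multinomial by \eqref{eq_def_canonical}. Its mixed factorial moment is
\begin{equation*}
\EE_{\ell,n}[\eta_0\eta_1]=\frac{n(n-1)\,\alpha^2}{\alpha\ell(\alpha\ell+1)},
\qquad
\EE_{\ell,n}[\eta_0]=\frac{n}{\ell}.
\end{equation*}
Writing $m:=\overline{\eta}^{\ell}_0=N/\ell$, a short computation gives
\begin{equation*}
\psi_{\ff}(\ell;\eta)=(m-\rho)^2-\frac{m^2+\alpha m}{\alpha\ell+1}.
\end{equation*}
Everything below follows from this identity together with $\chi(\rho)=(\alpha\rho+\rho^2)/\alpha$.

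\emph{First bound.} A crude moment bound loses here, since $\EE[(m-\rho)^4]\sim(\alpha\ell)^{-3}$ when $\alpha\ell$ is small. So we exploit orthogonality instead. Since $\EE_\rho[\ff]=0$ and $\EE_\rho[\ff\,(\eta_x-\rho)]=0$ for every $x$, $\psi_{\ff}$ is a polynomial of degree two in $N$ orthogonal to $1$ and $N$. Hence $\psi_{\ff}$ is a multiple of the degree-two Meixner polynomial of $\mathrm{NB}(r,p)$, with $r=\alpha\ell$ and $p=\rho/(\alpha+\rho)$. Its leading coefficient in $N$ is $\alpha/(\ell(\alpha\ell+1))$. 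The monic second Meixner polynomial has squared norm $2r(r+1)p^2/(1-p)^4=2\alpha\ell(\alpha\ell+1)\rho^2(\alpha+\rho)^2/\alpha^4$. Multiplying out gives exactly
\begin{equation*}
\EE_\rho[\psi_{\ff}(\ell)^2]=\frac{2\rho^2(\alpha+\rho)^2}{\alpha\ell(\alpha\ell+1)}.
\end{equation*}
This is the claimed bound, with a constant depending only on $\rho,A$. Alternatively one can expand $\EE[\psi^2]$ directly via the NB factorial moments $\EE[N^{\underline k}]=r^{\overline k}(p/(1-p))^k$ and observe the cancellations. The second bound then follows from the triangle inequality, since $\tfrac{1}{2\alpha\ell(2\alpha\ell+1)}\le\tfrac{1}{\alpha\ell(\alpha\ell+1)}$.

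\emph{Third bound.} Subtracting $\mQ_\rho$ gives
\begin{equation*}
\psi_{\ff}-\mQ_\rho
=(\rho^2+\alpha\rho)\Big(\frac{1}{\alpha\ell}-\frac{1}{\alpha\ell+1}\Big)
-\frac{(m-\rho)(m+\rho+\alpha)}{\alpha\ell+1}.
\end{equation*}
The deterministic term squared is $\lesssim(\alpha\ell)^{-2}(\alpha\ell+1)^{-2}\le(\alpha\ell)^{-3}$, using $(\alpha\ell+1)^2\ge\alpha\ell$. For the random term, Cauchy–Schwarz reduces matters to $\EE[(m-\rho)^4]$ and $\EE[(m+\rho+\alpha)^4]$. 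These are bounded by applying \eqref{eq_quant_mom_bound} with $\alpha$ replaced by $\alpha\ell$, since $N$ is NB with shape $\alpha\ell$.

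The case split in $\alpha\ell$ is the delicate bookkeeping step. If $\alpha\ell\ge1$, then $\EE[(m-\rho)^4]^{1/2}\lesssim(\alpha\ell)^{-1}$ and the other moment is $O(1)$, giving $(\alpha\ell)^{-1}(\alpha\ell)^{-2}$. If $\alpha\ell<1$, both fourth moments are $\lesssim(\alpha\ell)^{-3}$, so the product of square roots is $(\alpha\ell)^{-3}$, with denominator $\asymp1$. In both cases the bound is $\lesssim_{\rho,A}(\alpha\ell)^{-3}$, as required.

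The main obstacle is the first bound. Without identifying the orthogonal-polynomial structure (or carrying out the exact factorial-moment cancellation), naive estimates give $(\alpha\ell)^{-3}$ rather than $(\alpha\ell)^{-1}$ in the condensing range $\alpha\ell\lesssim1$. I would therefore first verify the exact variance identity carefully.
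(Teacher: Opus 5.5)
Your proposal is correct. For the third bound, and for the second via the triangle inequality, you follow essentially the paper's route: an explicit formula for $\psi_{\ff}-\mQ_\rho$, then term-by-term control through the fourth moment of $\overline{\eta}^{\ell}_0-\rho$.

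\emph{First bound.} Here the two routes genuinely differ. The paper needs no cancellation. With $m=\overline{\eta}^{\ell}_0$, it groups the closed form as
\[
\psi_{\ff}(\ell;\eta)=\frac{\alpha\ell}{\alpha\ell+1}(m-\rho)^2-\frac{\alpha+2\rho}{\alpha\ell+1}(m-\rho)-\frac{\rho(\alpha+\rho)}{\alpha\ell+1},
\]
see \eqref{eq_supp_psi}. The term $(m-\rho)^2$ then already carries the small coefficient $\alpha\ell/(\alpha\ell+1)$. So crude moment bounds give $(\alpha\ell)^2\big((\alpha\ell)^{-2}+(\alpha\ell)^{-3}\big)/(\alpha\ell+1)^2\lesssim 1/(\alpha\ell(\alpha\ell+1))$. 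The obstacle you single out comes only from writing $\psi_{\ff}$ as $(m-\rho)^2-\frac{m^2+\alpha m}{\alpha\ell+1}$.

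Your Meixner argument is nonetheless valid. The orthogonality $\EE_\rho[\psi_{\ff}N_{0,\ell}]=\EE_\rho[\ff N_{0,\ell}]=0$ holds, and the norm computation checks out; one can confirm it independently from the negative binomial cumulants. It also gives more than the paper needs: the exact identity $\EE_\rho[\psi_{\ff}(\ell;\eta)^2]=2\rho^2(\alpha+\rho)^2/(\alpha\ell(\alpha\ell+1))$. This shows the first bound is sharp both for $\alpha\ell\lesssim 1$ and for $\alpha\ell\gg1$. The cost is importing orthogonal-polynomial facts that the paper's regrouping avoids.

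\emph{Third bound, small correction.} $N_{0,\ell}$ is negative binomial with shape $\alpha\ell$ but mean $\rho\ell$, not $\rho$. So \eqref{eq_quant_mom_bound} with $\alpha\mapsto\alpha\ell$ is not literally applicable. Its constant would depend on $\ell$, and for $\alpha\ell\geqslant1$ it would wrongly suggest a fourth moment of order $(\alpha\ell)^{-3}$. The correct input is $\EE_\rho[(m-\rho)^4]\lesssim_{\rho,A}(\alpha\ell)^{-2}+(\alpha\ell)^{-3}$, which follows from the product structure, cf.\ \eqref{eq_supp10_equivensembles}. That bound yields exactly the orders you use in your case split, so your conclusion stands.
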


\begin{proof}
	First, we collect the estimates needed for the proof
	in the following display:
	\begin{equation}\label{eq_supp10_equivensembles}
		\begin{aligned}
			\EE_{\rho}\big[
				(\overline{\eta}^{\ell}_{0}-\rho)^{2}\big]
			= \frac{\chi(\rho)}{\ell} & \lesssim_{\rho}
			\alpha^{-1} \ell^{-1}\,,                                                                   \\
			\EE_{\rho}\big[
				(\overline{\eta}^{\ell}_{0}-\rho)^{4}\big]
			                          & \lesssim_{\rho} \alpha^{-2}\ell^{-2}+ \alpha^{-3}\ell^{-3}\,,  \\
			\EE_{\rho}\big[
				(\overline{\eta}^{\ell}_{0})^{q}\big]
			                          & \lesssim_{\rho,q}\alpha^{1-q}\,,\qquad \forall q\geqslant 1\,. 
		\end{aligned}
	\end{equation}
	The first two bounds use the product structure of $\nu_{\rho}$ and the
	one-site moments \eqref{eq_quant_mom_bound}, the third follows from Jensen's
	inequality.

	Next, using the  explicit weights and the partition
	function from \eqref{eq_nu} and \eqref{eq_def_canonical},
	we have
	\begin{equation*}
		\begin{aligned}
			\EE_{\ell , n} [ \eta_{0} \eta_{1}]
			 & = \EE_{\ell , n}\big[ \eta_{0} \EE_{\ell-1, n - \eta_{0}}[ \eta
					_{1}] \big]
			=
			\sum_{0 \leqslant k_{1}+ k_{2} \leqslant n}
			k_{1} w ( k_{1}) k_{2} w ( k_{2})
			\frac{Z_{\ell -2, n - k_{1}- k_{2}}}{Z_{\ell ,n}}
			= \frac{ \alpha n ( n -1)}{\ell ( \alpha\ell +1 )} \,,
		\end{aligned}
	\end{equation*}
	by explicit calculation.
	This identity allows us to rewrite $ \psi_{\ff}$ from \eqref{eq_psi} in terms
	of
	\begin{equation}\label{eq_supp_psi}
		\begin{aligned}
			\psi_{\ff}( \ell; \eta)
			 & =
			\frac{ \alpha N_{0,\ell} ( N_{0 ,\ell} -1)}{\ell ( \alpha\ell +1 )}
			- 2 \rho \frac{N_{0 , \ell}}{\ell} + \rho^{2}
			=
			\frac{ \alpha \ell}{ \alpha\ell +1}
			( \overline{\eta}^{\ell}_{0}- \rho)^{2}
			-
			\frac{\alpha + 2 \rho}{\alpha \ell +1}
			( \overline{\eta}^{\ell}_{0}- \rho)
			-
			\frac{\rho ( \alpha + \rho)}{\alpha \ell +1}
			%\red{=
			%( \overline{\eta}^{\ell}_{0}- \rho)^{2}
			%- \frac{ \chi( \overline{\eta}^{\ell}_{0})}{ \ell}
			%+ \overline{\eta}^{\ell}_{0}
			%\frac{\alpha + \overline{\eta}^{\ell}_{0}}{ \alpha \ell (
			%\alpha\ell +1)}}
			\,.
		\end{aligned}
	\end{equation}
	We derive the first estimate of the lemma by estimating the terms on the right
	separately
	\begin{equation*}
		\begin{aligned}
			\EE_{\rho}\Big[
			\big|
			\psi_{\ff}( \ell ; \eta) \big|^{2}
			\Big]
			\lesssim_{\rho}
			\frac{1}{( \alpha \ell +1)^{2}}
			\Big(
			\frac{1}{\alpha \ell}
			+1
			\Big)
			\lesssim_{\rho} \frac{1}{ \alpha \ell ( \alpha \ell +1)}
			\,.
		\end{aligned}
	\end{equation*}
	Likewise, the second bound follows by estimating each term in the difference separately.

	On the other hand, using the definition of $ \mQ_{\rho}$ \eqref{eq_Q}, we can write
	\begin{equation*}
		\begin{aligned}
			\psi_{\ff}( \ell; \eta)
			-
			\mQ_{\rho}( \ell , \eta)
			=
			-\frac{ 1}{ \alpha\ell +1}
			( \overline{\eta}^{\ell}_{0}- \rho)^{2}
			-
			\frac{\alpha + 2 \rho}{\alpha \ell +1}
			( \overline{\eta}^{\ell}_{0}- \rho)
			+
			\frac{ \rho ( \alpha + \rho)}{\alpha \ell (\alpha
				\ell +1)} \,.
		\end{aligned}
	\end{equation*}
	Thus, term-by-term estimates also yield the bound
	\begin{equation*}
		\begin{aligned}
			\EE_{\rho}\Big[
			\big|
			\psi_{\ff}( \ell ; \eta)
			-  \mQ_{\rho}( \ell , \eta)\big|^{2}
			\Big]
			\lesssim_{\rho}
			\frac{1}{ ( \alpha \ell +1 )^{2}}
			\Big(
			\frac{1}{ \alpha \ell} + \frac{1}{
				\alpha^{2}\ell^{2}}
			+
			\frac{1}{ \alpha^{3}\ell^{3}}
			\Big)
			\lesssim_{\rho} \frac{1}{ \alpha^{3} \ell^{3} }
			\,.
		\end{aligned}
	\end{equation*}
	This concludes the proof.
	%OLD VERY NONOPTIMAL VERSION
	%\gray{
	%		It suffices to estimate the second moment of each of the terms on the right--hand
	%		side of \eqref{eq_supp1_equiv}
	%		individually
	%		\begin{itemize}
	%			\item First, notice that
	%				\begin{equation*}
	%					\begin{aligned}
	%						\EE_{\rho}\big[ \big(
	%								\overline{\eta}^{\ell}_{0}- \rho
	%						\big)^{2} \big]
	%						=
	%						\frac{1}{ \ell^{2}}
	%						\sum_{x \in \mathbf{B}_{0 , \ell}} \EE[ (
	%						\eta_{x}- \rho)^{2}] =
	%						\frac{ \chi( \rho)}{ \ell} \,.
	%					\end{aligned}
	%				\end{equation*}
	%
	%			\item On the other hand, using again that $ \nu_{\rho}$ is a product
	%				measure,
	%				\begin{equation*}
	%					\begin{aligned}
	%						\EE_{\rho}\big[ (
	%								\overline{\eta}^{\ell}_{0}- \rho
	%						)^{4} \big]
	%						=
	%						&\frac{1}{ \ell^{4}}
	%						\sum_{x \in \mathbf{B}_{0 , \ell}}
	%						\EE[ (\eta_{x}- \rho)^{4}]
	%						+ \frac{3}{\ell^{4}}
	%						\sum_{\substack{x,y \in \mathbf{B}_{0 , \ell}\\ x \neq y}}
	%						\EE[ (\eta_{x}- \rho)^{2}]
	%						\EE[ (\eta_{y}- \rho)^{2}]
	%						\lesssim_\rho \alpha^{-3}\ell^{-2}\,,
	%					\end{aligned}
	%				\end{equation*}
	%				where we used the fourth moment of $\nu_\rho$.
	%
	%			\item Similarly, for the last term, we use that by Jensen's inequality
	%				(for $ q \geqslant 1$)
	%					\begin{equation*}
	%						\begin{aligned}
	%							\EE_{\rho}[
	%							(\overline{\eta}^{\ell}_{0})^{q}]
	%						\leqslant
	%						\frac{1}{\ell}
	%						\sum_{x \in \mathbf{B}_{0, \ell}} \EE_{\rho}[
	%						\eta_{x}^{q}]
	%							\lesssim_{\rho,q}\alpha^{1-q}\,.
	%						\end{aligned}
	%					\end{equation*}
	%			\end{itemize}
	%		}
\end{proof}

Furthermore, we also note the following moment bound of
$ \mQ_{\rho}$ which is a consequence of its definition \eqref{eq_Q} and
\eqref{eq_supp10_equivensembles}
\begin{equation}\label{eq_secondmoment_Q}
	\begin{aligned}
		\EE_{\rho}\Big[
		\big|
		\mQ_{\rho}( \ell , \eta) \big|^{2}
		\Big]
		\leqslant
		2
		\EE_{\rho}\big[\big( \overline{\eta}_{0}^{\ell}- \rho \big)^{4}\big] + 2\frac{\chi ( \rho)
			^{2}}{ \ell^{2}}
		\lesssim_{\rho} \frac{1}{\alpha^{2}\ell^{2}}
		+ \frac{1}{\alpha^{3} \ell^{3}}
		\,,
	\end{aligned}
\end{equation}
which is further crudely bounded by $ \alpha^{-3} \ell^{-2}$.

\subsection{Proof of Proposition~\ref{prop_BG}}\label{sec_proof_BG}

We now show how the preceding estimates imply Proposition~\ref{prop_BG}.  The proof uses two different kinds of inputs, only one of which is specific to the current.  The $H^{-1}$-estimate in Corollary~\ref{cor_kp_postHminus1} is a
general estimate for centred local functions supported on disjoint blocks, which is needed to control the auxiliary block functions $ \psi$.
On the other hand, the $L^2(\nu_\rho)$ control of those
auxiliary functions is specific to the current observable
$\ff(\eta)=(\eta_0-\rho)(\eta_1-\rho)$, see
Lemma~\ref{lem_equivensemble}.
Because the proof follows closely \cite{GoncJara14}, we will only sketch the
steps and guarantee that the unbounded rates of the inclusion process are
controlled.

Our goal is to control the error incurred by
\begin{equation*}
	\begin{aligned}
		\tau_{x}
		\big(
		\ff ( \eta(t))
		-
		\mQ_{\rho}(\ell , \eta (t))
		\big)
		 & =
		\tau_{x}
		\big(
		\ff ( \eta(t))
		-  \psi_{\ff}( \ell; \eta(t)) \big)
		+
		%& \quad +
		%\sum_{k=1}^{M}
		%\tau_{x}
		%\big(
		%		\psi_{\ff}( 2^{k}; \eta(t))
		%		-  \psi_{\ff}( 2^{k+1}; \eta(t)) \big)\\
		%			& \quad +
		\tau_{x}
		\big(
		\psi_{\ff}( \ell; \eta(t))
		-
		\mQ_{\rho}(\ell , \eta (t))
		\big) \,.
	\end{aligned}
\end{equation*}
We control the contribution of each of the two differences on the right--hand side separately.
In the following, all upper bounds with respect to $ \alpha$ are to be interpreted in the
regime where $ \alpha$ is uniformly bounded from above (these bounds will
only be of interest to us when $ \alpha$ is small).
\par\smallskip\smallskip

\textbf{Step 1.} \emph{One block estimate.}
First, we notice that $ \EE_{\rho}[ \tau_{x}
		\big(
		\ff ( \eta)
		-  \psi_{\ff}( \ell_{0}; \eta) \big)| \mF_{x, \ell_{0}}]=0 $ by
definition of $ \psi_{\ff}$.
Consequently, we can apply Corollary~\ref{cor_kp_postHminus1} (together
with the Cauchy-Schwarz inequality) which yields
\begin{equation}\label{eq_oneblock}
	\begin{aligned}
		 & \mathbf E_{\rho,L}\bigg[
		\sup_{ t \in [0, T]}\bigg|
		\int_{0}^{t}
		\sum_{x \in \ZZ} \tau_{x}
		\big(
		\ff ( \eta(s))
		-
		\psi_{\ff}( \ell_{0}; \eta(s))
		\big) v_{s,x}
		\ud s
		\bigg|^{2}\bigg]                \\
		 & \leqslant \ell_{0}
		\sum_{i =1}^{\ell_{0}}
		\mathbf E_{\rho,L}\bigg[
		\sup_{ t \in [0, T]}\bigg|
		\int_{0}^{t}
		\sum_{x \in \ZZ} \tau_{\ell_{0} x+i}
		\big(
		\ff ( \eta(s))
		-
		\psi_{\ff}( \ell_{0}; \eta(s))
		\big) v_{s,\ell_{0}x+i}
		\ud s
		\bigg|^{2}\bigg]
		\\
		 & \lesssim  L^{-2}
		\sum_{i =1}^{\ell_{0}}
		\alpha^{-2}
		\ell_{0}^{2}
		\sum_{x \in \ZZ}
		\int_{0}^{T}
		\EE_{\rho}\big[\big|
		\ff ( \eta)
		-
		\psi_{\ff}( \ell_{0}; \eta)\big|^{2}
		\big]
		v_{s,\ell_{ 0}x+i}^{2}
		\ud s                           
		 \lesssim_{\rho}  \alpha^{-2}
		\frac{\ell_{0}^{2}}{\alpha^{2} L^{2}}
		\int_{0}^{T} \| v_{s, \cdot}\|_{\ell^{2}}^{2} \ud s\,,
	\end{aligned}
\end{equation}
where, in the last inequality, we used that $\psi_{\ff}$ is an $ L^{2}$-projection of $ \ff$, thus,	
\begin{equation*}
	\begin{aligned}
		\EE_{\rho}\big[\big|
		\ff ( \eta)
		-
		\psi_{\ff}( \ell_{0}; \eta)\big|^{2}\big]
		\leqslant
		\EE_{\rho}\big[\big|
		\ff ( \eta)
		\big|^{2}\big] = \chi( \rho)^{ 2} \lesssim_{\rho} \alpha^{-2}\,.
	\end{aligned}
\end{equation*}

\textbf{Step 2.} \emph{Renormalisation step and two block estimate.}
Similarly to the one block estimate, we can control the replacement across dyadic scales:
\begin{equation*}
	\begin{aligned}
		 & \mathbf E_{\rho,L}\bigg[\bigg|
		\int_{0}^{t}
		\sum_{x \in \ZZ} \tau_{x}
		\big(
		\psi_{\ff}( 2^{k}; \eta(s))
		-
		\psi_{\ff}( 2^{k+1}; \eta(s))
		\big) v_{s,x}
		\ud s
		\bigg|^{2}\bigg]                  \\
		 & \lesssim 2^{k+1}
		\sum_{i =1}^{2^{k+1}}
		\frac{1}{\alpha 2^k ( 1+ \alpha 2^k)}
		L^{-2}
		\alpha^{-2}
		2^{2(k+1)}
		\sum_{x \in \ZZ}
		\int_{0}^{t}
		v_{s,2^{k+1}x+i}^{2}
		\ud s
		\lesssim_{\rho}
		\frac{2^{2k}}{\alpha^{3}( \alpha 2^{k} +1 ) L^{2}}
		\int_{0}^{t} \| v_{s, \cdot}\|_{\ell^{2}}^{2} \ud s\,,
	\end{aligned}
\end{equation*}
by application of Corollary~\ref{cor_kp_postHminus1} and Lemma~\ref{lem_equivensemble}.
Notice that the zero-mean assumption in Corollary~\ref{cor_kp_postHminus1} is satisfied
because $\mF_{x,2^{k+1}}\subset\mF_{x,2^{k} }$, thus, the tower property gives
\begin{equation}\label{eq_tower}
	\begin{aligned}
		\EE_\rho\big[\tau_x\psi_{\ff}(2^{k};\eta)\,\big|\,\mF_{x,2^{k+1}}\big]
		=
		\EE_\rho\big[\tau_x\ff(\eta)\,\big|\,\mF_{x,2^{k+1}}\big]
		=
		\tau_x\psi_{\ff}(2^{k +1}; \eta)\,.
	\end{aligned}
\end{equation}
Consequently, by telescoping and the triangle inequality
\begin{equation*}
	\begin{aligned}
		&\mathbf E_{\rho,L}\bigg[\bigg|
		\int_{0}^{t}
		\sum_{x \in \ZZ}
		\tau_{x}
		\big(
		\psi_{\ff}( \ell_{0}; \eta(s))
		-
		\psi_{\ff}( 2^{M}; \eta(s))
		\big) v_{s,x}
		\ud s
		\bigg|^{2}\bigg]^{1/2}\\
		 & \lesssim_{\rho}
		\left\{ \sum_{k =1}^{M-1} \frac{2^{k}}{\sqrt{ \alpha
				2^{k}+1}} \right\} \frac{1}{ \alpha^{3/2} L}
		\bigg(\int_{0}^{t} \| v_{s, \cdot}\|_{\ell^{2}}^{2} \ud s\bigg)
		^{1/2}             
		 \lesssim_{\rho}
		\frac{1}{ \alpha^{3/2} L}
		\frac{2^M}{\sqrt{\alpha 2^M +1}}
		\bigg(\int_{0}^{t} \| v_{s, \cdot}\|_{\ell^{2}}^{2} \ud s\bigg)
		^{1/2}
		\,,
	\end{aligned}
\end{equation*}
where $M$ is the smallest integer such that $\ell\leqslant 2^{M}<2\ell$.
Squaring both sides yields the desired two block estimate provided $ \ell = 2^{M}$.
If $\ell$ is not dyadic, we apply one last time Corollary~\ref{cor_kp_postHminus1} and
Lemma~\ref{lem_equivensemble} (jointly with the triangle inequality):
\begin{equation*}
	\begin{aligned}
		\mathbf E_{\rho,L}\bigg[
		\bigg|
		\int_0^t
		\sum_{x\in\ZZ}
		\tau_x\big( \psi_{\ff}(2^M;\eta(s))-\psi_{\ff}(\ell;\eta(s))\big)v_{s,x}\,\ud s
		\bigg|^2
		\bigg]
		\lesssim_\rho
		\frac{\ell^{2}}{\alpha^3 ( \alpha \ell +1 ) L^2}
		\int_0^t\|v_{s,\cdot}\|_{\ell^2}^2\ud s\,,
	\end{aligned}
\end{equation*}
where the zero mean assumption holds for the same reason as in \eqref{eq_tower}.

Together with \textbf{Step 1.}, we conclude
\begin{equation}\label{eq_one_block}
	\begin{aligned}
		\mathbf E_{\rho,L}\bigg[\bigg|
		\int_{0}^{t}
		\sum_{x \in \ZZ} \tau_{x}
		\big(
		\ff ( \eta(s))
		-
		\psi_{\ff}( \ell; \eta(s))
		\big) v_{s,x}
		\ud s
		\bigg|^{2}\bigg]
		\lesssim_{\rho , \ell_{0}}
		\frac{1}{ \alpha^{3} L^{2}} \Big(\frac{1}{ \alpha}
		+
		\frac{\ell^{2}}{  \alpha \ell +1 }
		\Big)
		\int_{0}^{t} \| v_{s, \cdot}\|_{\ell^{2}}^{2} \ud s\,.
	\end{aligned}
\end{equation}

\textbf{Step 3.} \emph{Final static replacement.}
Lastly, by the Cauchy-Schwarz inequality (applied to the time integral)
\begin{equation}\label{eq_final_block}
	\begin{aligned}
		 & \mathbf E_{\rho,L}\bigg[\bigg|
		\int_{0}^{t}
		\sum_{x \in \ZZ}
		\tau_{x}
		\big(
		\psi_{\ff}( \ell; \eta(s))
		-
		\mQ_{\rho}(\ell , \eta (s))
		\big) v_{s,x}
		\ud s
		\bigg|^{2}\bigg]                  \\
		 & \leqslant
		t
		\int_{ 0}^{t}
		\Big\|
		\sum_{x \in \ZZ}
		\tau_{x}
		\big(
		\psi_{\ff}( \ell; \eta)
		-
		\mQ_{\rho}(\ell , \eta )		\big) v_{s,x}
		\Big\|_{L^{2}( \nu_{\rho})}^{2}
		\ud s
		 \lesssim_{\rho}
		\frac{t}{ \alpha^{ 3} \ell^2}
		\int_0^t \|v_{s,\cdot}\|_{\ell^2}^2\,\ud s\,,
	\end{aligned}
\end{equation}
where we used Lemma~\ref{lem_equivensemble}, the fact that
$\tau_{x}
	\big(
	\psi_{\ff}( \ell; \eta)
	-
	\mQ_{\rho}(\ell , \eta )		\big)$ is correlated to at most
$ 2 \ell +1 $ other such blocks, and that $ \psi_{\ff}( \ell; \eta)
	-
	\mQ_{\rho}(\ell , \eta )	$ is centred under $
	\nu_{\rho}$.
\par\smallskip\smallskip

Combining both \eqref{eq_one_block} and \eqref{eq_final_block} proves Proposition~\ref{prop_BG}.

\begin{remark}\label{rem_l2}
	To justify the applications of Corollary~\ref{cor_kp_postHminus1} above,
	set $v^R_{s,x}:=v_{s,x}\mathbf 1_{\{|x|\leq R\}}$.
	Then $v^R$ has finite spatial support and
	$v^R\to v$ in $L^2([0,t];\ell^2(\ZZ))$.
	Applying the preceding estimates to $v^R-v^{R'}$ shows that the
	truncated additive functionals are Cauchy in
	$L^2(\mathbf P_{\rho,L})$.
	Passing to the limit then yields the bounds for $v$.
\end{remark}

\section{Tightness}\label{sec_tight}

Dynkin's formula yields the following decomposition of the density fluctuation field
\begin{equation}\label{eq_dynkin_tight}
	\begin{aligned}
		u_{L} ( t, \varphi)
		 & =
		u_{L}( 0, \varphi)
		+ \int_{0}^{t} ( \partial_{s}+ L^{2} \gens +
		L^{3/2}\gena ) u_{L}(s, \varphi)
		\ud s
		+
		M_t^L(\varphi) \\
		 & =
		u_{L}( 0, \varphi)
		+ \mathcal{D}_t^L(\varphi)
		+ \mZ_t^L(\varphi)
		+
		M_t^L(\varphi)
		\,,
	\end{aligned}
\end{equation}
where $ M^L(\varphi)$ is a martingale and we defined
the symmetric integral contribution
\begin{equation}\label{eq_def_mD}
	\begin{aligned}
		\mathcal{D}_t^L(\varphi)
		:=
		\int_{0}^{t}
		L^{2} \gens u_{L} (r, \varphi)
		\ud r
		\,,
	\end{aligned}
\end{equation}
as well as the antisymmetric integral contribution
\begin{equation}\label{eq_def_mZ}
	\begin{aligned}
		\mZ_t^L(\varphi) := \int_{0}^{t} (\partial_{s} + L^{3/2} \gena) u_{L}( s, \varphi; \eta(s))  \ud
		s\,.
	\end{aligned}
\end{equation}
% By Corollary~\ref{cor_growth} and the rapid decay of Schwartz functions, the
% spatial truncations converge almost surely in
% $\mS'(\RR)$, uniformly on $[0,T]$. Since the truncated fields are càdlàg and the drift terms are
% time integrals, 
Notice that
\begin{equation*}
	\begin{aligned}
		u_L,M^L\in D([0,T],\mS'(\RR)),
		\qquad
		\mD^L,\mZ^L\in C([0,T],\mS'(\RR))\,,
	\end{aligned}
\end{equation*}
since their spatially truncated analogues have càdlàg paths that converge almost surely in $ \mS ' ( \RR)$ uniformly in $[0,T]$, due to Corollary~\ref{cor_growth} and the rapid decay of Schwartz functions.

The goal of this section is to prove that each of the components on the right--hand side
of \eqref{eq_dynkin_tight} is tight for every $ \varphi \in \mS ( \RR ) $, 
which yields tightness in $ D ([0,T] , \mS' ( \RR ))$ by Mitoma's criterion.
% , namely $ ( u_{L}( 0, \varphi))_{L \in \NN }$, $ (
% 	\mathcal{D}_{\cdot}^{L}(\varphi))_{L \in \NN}$, $ ( \mZ^{L}( \varphi) )_{L \in \NN}$, and $ (M^{L}(
% 	\varphi))_{L \in \NN}$.
Hence, for every subsequence we will find a further subsequence along which all of
the terms converge.
%In particular, this yields tightness of each component in $ D ([0,T] , S' ( \RR ))$, by
%Mitoma's criterion.
%\cite{Mitoma83}.
We also establish that limit points are supported (almost surely) on $ C (
	[0,T] , \mS' (\RR ) )$.

First, we introduce some useful
notation.
For every $ \varphi \in \mS ( \RR ) $, we define the shifted test functions
\begin{equation}\label{eq_phi_timeshift}
	\begin{aligned}
		\varphi_{t}(\cdot)=	\varphi_{t}^{(L)} (\cdot)
		:=
		\varphi \big( \cdot -
		L^{1/2}
		j' ( \rho) t   \big)
	\end{aligned}
\end{equation}
and $ \varphi_{t,x}
	:= \varphi_{t} ( L^{-1}x) =
	\varphi \big( L^{-1}x -
	L^{1/2}
	j' ( \rho) t   \big)$.
With this notation
\begin{equation*}
	\begin{aligned}
		u_{L}(t, \varphi) =
		\frac{1}{ \sqrt{L}}  \sum_{x \in \ZZ}
		\varphi_{t, x} \big(
		\eta_{x} - \rho\big)\,.
	\end{aligned}
\end{equation*}
Furthermore, we write
\begin{equation}\label{eq_grad}
	\begin{aligned}
		\nabla_x^L \varphi := L \big(
		\varphi (
		L^{-1} (x+1)) - \varphi ( L^{-1}x )\big)\,,
	\end{aligned}
\end{equation}
for the discrete gradient, and
\begin{equation*}
	\begin{aligned}
		\Delta_x^L  \varphi :=
		L^{2}
		\big(
		\varphi ( L^{-1}(x+1) ) + \varphi ( L^{-1} ( x-1)) - 2
		\varphi(
		L^{-1}x)
		\big)\,,
	\end{aligned}
\end{equation*}
for the discrete Laplace operator.

The subsequent estimates are tedious but mostly standard, see for example
\cite{GoncJara14,DGP17}.
Because we are also interested in the regime $ \alpha \to 0 $, we take particular
interest in how the derived bounds depend on $ \alpha$.

\subsection*{Tightness of the field at fixed times}
The law of the input $ \eta(t)$ into $ u_{L}( t, \varphi) $ does not depend on $t$, due
to the invariance of $ \nu_{\rho}$.
By definition of $ u_{L}(t, \varphi)$, we have for every
$ t \in [0,T]$
\begin{equation}\label{eq_cond_tight_fixedfield}
	\begin{aligned}
		\sup_{L\in\NN}
		\mathbf E_{\rho,L}\big[
		|u_{L}(t,
		\varphi)|^{2}\big]
		=
		\chi( \rho)\,
		\sup_{L\in\NN}
		L^{-1}
		\sum_{x \in \ZZ} \varphi \big( L^{-1}x -
		L^{1/2}
		j' ( \rho) t   \big)^{2}
		\lesssim_{\varphi}\chi(\rho) \,,
	\end{aligned}
\end{equation}
where we used that the
Riemann sum converges to the integral $ \int
	\varphi(x)^{2} \ud x$, with the transport shift having no effect.
More explicitly, since $\chi(\rho)\lesssim_\rho\alpha^{-1}$, we have
\begin{equation*}
	\begin{aligned}
		\sup_{L \in \NN}
		\mathbf E_{\rho,L}[|u_L(t,\varphi)|^2]
		\lesssim_{\rho,\varphi}
		\alpha^{-1}\,.
	\end{aligned}
\end{equation*}

\subsection*{Tightness of the martingale term}

Tightness of the sequence of martingales $ ( M^L(\varphi))
	_{L\in\NN}$ follows from $C$--tightness of the (predictable) quadratic
variation processes $(
	\langle M^L(\varphi)\rangle )_{L\in\NN}$, see \cite[Theorem VI.4.13]{JS}.
%see also Whitt FCLT martingale survey Theorem 3.6
%Indeed, for
%the latter it suffices to establish the conditions of the Kolmogorov--Chentsov
%theorem, since by \eqref{eq_qv} the trajectories of $ (\langle M^L(\varphi)\rangle_{t})_{t \geqslant 0}$ are almost surely continuous.
In fact, we will prove a much stronger statement, namely a quantitative uniform control
of fluctuations of the predictable quadratic variation process:

\begin{lemma}\label{lem_quant_qv}
	For every $L \in \NN $, $ \varphi \in \mS ( \RR)$ and $ T>0$
	\begin{equation*}
		\begin{aligned}
			\mathbf E_{\rho,L}
			\Big[
			\sup_{t\leq T}
			\Big|
			\langle  M^L(\varphi)\rangle_t
			-
			t\, \alpha \chi(\rho) \| \partial_{x} \varphi\|_{L^{2} ( \RR )}^{2}
			\Big|^2
			\Big]
			\lesssim_{\rho, \varphi}
			\frac{T}{ \alpha^{4} L^{3}} + \frac{T^{ 2}}{\alpha L}
			+ T^{2} o_{L}(1)	\,,
		\end{aligned}
	\end{equation*}
	where $ o_{L}(1) $ vanishes as $ L \to \infty $ uniformly for small $ \alpha$.
\end{lemma}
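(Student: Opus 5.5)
Starting from \eqref{eq_qv} with $F=u_L(s,\varphi)$, I write the bracket explicitly. A jump $x\to x\pm1$ changes $u_L$ by $L^{-1/2}(\varphi_{s,x\pm1}-\varphi_{s,x})$, which equals $\pm L^{-3/2}\nabla^L\varphi_s$ evaluated at the appropriate edge. Hence
\begin{equation*}
\langle M^L(\varphi)\rangle_t=\int_0^t \frac1L\sum_{x\in\ZZ}(\nabla^L_x\varphi_s)^2\big(p_L\,\eta_x(\alpha+\eta_{x+1})+q_L\,\eta_{x+1}(\alpha+\eta_x)\big)\ud s .
\end{equation*}
The weight in parentheses is $\tfrac12(\alpha(\eta_x+\eta_{x+1})+2\eta_x\eta_{x+1})+\tfrac{\beta}{2}L^{-1/2}\alpha(\eta_x-\eta_{x+1})$. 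I then expand it around $\rho$ exactly as in \eqref{eq_current_expansion}:
\begin{equation*}
\alpha\rho+\rho^2+\tfrac12(\alpha+2\rho)\big((\eta_x-\rho)+(\eta_{x+1}-\rho)\big)+W_x(\eta)+\tfrac{\beta\alpha}{2\sqrt L}(\eta_x-\eta_{x+1}).
\end{equation*}
Note that $\alpha\rho+\rho^2=\alpha\chi(\rho)$.

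\textbf{Deterministic part.} The constant term gives $t\,\alpha\chi(\rho)\,L^{-1}\sum_x(\nabla^L_x\varphi_s)^2$. I compare $L^{-1}\sum_x(\nabla_x^L\varphi_s)^2$ with $\|\partial_x\varphi\|^2_{L^2}$ by a Riemann-sum argument. The error is $o_L(1)$ uniformly in $s$, since the shift $L^{1/2}j'(\rho)s$ only translates the grid. Its contribution to the bound is at most $T^2(\alpha\chi)^2 o_L(1)$, and $\alpha\chi$ stays bounded for small $\alpha$, which gives the $T^2 o_L(1)$ term.

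\textbf{Linear fluctuation terms.} For the terms linear in $\eta-\rho$, I bound the supremum over $t$ by Cauchy--Schwarz in time, $\sup_t|\int_0^t g_s\ud s|^2\le T\int_0^T g_s^2\ud s$, and then use stationarity. For $g_s=L^{-1}\sum_x a_{s,x}(\eta_x-\rho)$ with independent sites, this gives
\begin{equation*}
\EE_\rho[g_s^2]=L^{-2}\chi(\rho)\sum_x a_{s,x}^2\lesssim_\varphi \chi(\rho)/L\lesssim \alpha^{-1}L^{-1}.
\end{equation*}
This yields the $T^2/(\alpha L)$ term, using $(\alpha+2\rho)^2$ bounded. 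The asymmetric correction $\tfrac{\beta\alpha}{2\sqrt L}(\eta_x-\eta_{x+1})$ is handled the same way and is even smaller.

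\textbf{Quadratic term.} The term $\int_0^t L^{-1}\sum_x(\nabla^L_x\varphi_s)^2W_x(\eta(s))\ud s$ is the main obstacle. The naive static bound gives $\EE_\rho[(L^{-1}\sum_x a_xW_x)^2]=L^{-2}\chi^2\sum_x a_x^2\lesssim \alpha^{-2}L^{-1}$, hence $T^2\alpha^{-2}L^{-1}$. The claimed bound has $\alpha^{-4}L^{-3}T$ instead, which is linear in $T$ and therefore requires a dynamical estimate. I would apply the one-block step of Proposition~\ref{prop_BG}, i.e.\ Corollary~\ref{cor_kp_postHminus1} together with \eqref{eq_oneblock}, with $v_{s,x}=L^{-1}(\nabla^L_x\varphi_s)^2$, so that $\int_0^T\|v_s\|^2_{\ell^2}\ud s\lesssim_\varphi T/L$. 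The piece $W_x-\tau_x\psi_{\ff}(\ell_0)$ with $\ell_0=2$ then contributes $\lesssim \alpha^{-4}L^{-2}\cdot T L^{-1}=T\alpha^{-4}L^{-3}$, including the supremum over $t$ as in \eqref{eq_oneblock}. For the remainder $\psi_{\ff}(2;\eta)$, I use the explicit formula \eqref{eq_supp_psi}. It equals $\frac{2\alpha}{2\alpha+1}(\overline\eta^2_0-\rho)^2$ plus linear and constant terms, and for small $\alpha$ it has size $O(\alpha)\cdot\alpha^{-1}$ in $L^2$. The linear part is treated as above. The squared part is either absorbed by the time-Cauchy--Schwarz bound, which gives $T^2L^{-1}\cdot\EE[\psi_{\ff}(2)^2]\lesssim T^2/(\alpha L)$ via Lemma~\ref{lem_equivensemble}, or re-centred. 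Its mean is exactly compensated by the constant in \eqref{eq_supp_psi}, since $\EE_\rho\psi_{\ff}=0$. Finally, I collect the three contributions by the triangle inequality.
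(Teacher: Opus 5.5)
Your proposal is correct and follows the paper's proof essentially step by step. The paper writes the bracket with weight $a_x^L=\eta_x\eta_{x+1}+\alpha(p_L\eta_x+q_L\eta_{x+1})$ and handles its mean by the same Riemann-sum argument. It removes $W_x-\tau_x\psi_{\ff}(2;\eta)$ with the one-block estimate \eqref{eq_oneblock}, giving $T\alpha^{-4}L^{-3}$. It then bounds the remaining centred, finite-range part by Cauchy--Schwarz in time, giving $T^2/(\alpha L)$; there it lumps your linear terms together with $\tau_x\psi_{\ff}(2;\eta)$ into a single $b_{\alpha,L}$ with $\EE_\rho[b_{\alpha,L}^2]\lesssim\alpha^{-1}$, which is only a bookkeeping difference from your split. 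One small inaccuracy: $\psi_{\ff}(2;\eta)$ is of order $\alpha^{-1/2}$ in $L^2(\nu_\rho)$, not $O(\alpha)\cdot\alpha^{-1}$. This is harmless, since your final bound correctly uses $\EE_\rho[\psi_{\ff}(2;\eta)^2]\lesssim\alpha^{-1}$ from Lemma~\ref{lem_equivensemble}.
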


The reader may wonder about the need for such an overly precise quantitative statement.
We will see in
Section~\ref{sec_cond} that the quantitative control is precisely the correct one to
control the martingale contribution in the weakly condensing regime.

\begin{proof}
	Using \eqref{eq_qv}, the quadratic variation is given in terms of
	\begin{equation}\label{eq_prelimit_bracket}
		\begin{aligned}
			\langle  M^L(\varphi)\rangle_t
			 & =
			L^{2}
			\int_0^t
			\sum_{x,y \in \ZZ}
			c(x,y) \eta_{x}(r)( \alpha + \eta_{y}(r))
			\big(
			u_{L} (r, \varphi; \eta^{x,y}(r))
			-
			u_{L}( r, \varphi; \eta(r))
			\big)^{2}
			\ud r \\
			 & =
			\int_0^t
			\frac{1}{L}
			\sum_{x\in\ZZ}
			a_x^{L}(\eta (r))
			(\nabla_x^L\varphi_{r})^2\,\ud r,
		\end{aligned}
	\end{equation}
	where we defined
	\begin{equation*}
		a_x^L(\eta):=
		\eta_x\eta_{x+1}+
		\alpha(p_L\eta_x+q_L\eta_{x+1})\,,
	\end{equation*}
	which has mean $ \EE_{\rho}[ a_{x}^{L}( \eta)]=
		\alpha \chi( \rho) $ and  satisfies the small-$\alpha$ $L^{2}(
		\nu_{\rho}) $-bound
	$					\EE_\rho[|a_x^L|^2]
		\lesssim_\rho \alpha^{-2}$.
	Moreover, in the second identity we used
	$ u_{L} (r, \varphi; \eta^{x,x+1}) -
		u_{L}( r, \varphi; \eta)
		=
		L^{-1/2}
		( \varphi_{r,x+1} - \varphi_{ r,x}) $ and performed a change of
	variables in the sum.
	Consequently,  by stationarity
	\begin{equation}\label{eq_mean_qv_conv}
		\begin{aligned}
			\sup_{t \in [0,T]}
			\left|
			\mathbf E_{\rho,L}[\langle M^L(\varphi)\rangle_t]
			-
			t\, \alpha \chi ( \rho) \|\partial_x\varphi\|_{L^2(\RR)}^2
			\right|
			 & \leqslant T \, \alpha \chi( \rho)
			\sup_{ r \in [0, T]} \Big|
			\frac{1}{L}
			\sum_{x\in\ZZ}
			(\nabla_x^L\varphi_{r})^2-
			\|\partial_x\varphi\|_{L^2(\RR)}^2
			\Big|                                          \\
			 & \lesssim_{\rho, \varphi}  T \,  o_{L}(1)\,,
		\end{aligned}
	\end{equation}
	which vanishes as $L \to \infty$ by Lemma~\ref{lem_usefulestimates_phi}\,(iii) and a Riemann
	sum convergence argument. This establishes $ \lim_{L \to \infty}  \mathbf E_{\rho,L}[\langle M^L(\varphi)\rangle_t]
		= t\, \alpha \chi(\rho) \| \partial_{x} \varphi\|_{L^{2} ( \RR )}^{2} $ uniformly on
	compact time intervals.
	\par\smallskip\smallskip

	Next, we show that fluctuations of the quadratic variation vanish uniformly in time.
	To this end, we use (recall $ \tau_{x}\ff (\eta)= (\eta_{x}- \rho) (\eta_{x+1}- \rho) $ and
	$ \psi_{\ff} ( \ell, \eta) = \EE_{\ell, N_{\ell}}[ \ff ( \eta) ]$)
	\begin{equation*}
		\begin{aligned}
			a_x^{L}(\eta) - \alpha \chi( \rho)
			 & =
			\tau_{x}\ff ( \eta) \pm \tau_x\psi_\ff(2,\eta)
			+ \rho( \eta_{x}+ \eta_{x+1}- 2 \rho)
			+
			\alpha(p_L\eta_x+q_L\eta_{x+1} - \rho)                        \\
			 & =: \tau_{x}\ff ( \eta) - \tau_x\psi_\ff(2,\eta) + \tau_{x}
			b_{\alpha, L} (\eta)
		\end{aligned}
	\end{equation*}
	to split
	\begin{equation}\label{eq_qv_splitting}
		\begin{aligned}
			\langle  M^L(\varphi)\rangle_t
			-
			\mathbf E_{\rho,L}[ \langle  M^L(\varphi)\rangle_t]
			 & =
			\int_0^t
			\frac{1}{L}
			\sum_{x\in\ZZ}
			\big(	\tau_{x}\ff ( \eta(r))- \tau_x\psi_\ff(2,\eta(r))\big)
			(\nabla_x^L\varphi_{r})^2\,\ud r \\
			 & + \int_0^t
			\frac{1}{L}
			\sum_{x\in\ZZ}
			\tau_{x}b_{\alpha, L} (\eta(r))        (\nabla_x^L\varphi_{r})^2\,\ud r\,.
		\end{aligned}
	\end{equation}

	First, we notice that $ \EE_{\rho}[ \tau_{x}\ff ( \eta)- \tau_x\psi_\ff(2,\eta)|
			\mF_{x,2} ] =0$ by definition. Hence, the one block estimate  \eqref{eq_oneblock} yields
	\begin{equation}\label{eq_tight_oneblock}
		\begin{aligned}
			 & \mathbf E_{\rho,L}\bigg[
			\sup_{ t \in [0,T]}
			\bigg|
			\int_{0}^{t}
			\frac{1}{L}
			\sum_{x\in\ZZ}
			\big(	\tau_{x}\ff ( \eta(r))- \tau_x\psi_\ff(2,\eta(r))\big)
			(\nabla_x^L\varphi_{r})^2\ud r
			\bigg|^{2}
			\bigg]
			\lesssim_{\rho, \varphi}
			\frac{T}{ \alpha^{4}L^{3}} \,,
		\end{aligned}
	\end{equation}
	where one power of $ L$ is lost to control $ \| \nabla_{x}^{L} \varphi
		\|_{\ell^{2}}^{2}$.

	On the other hand, using first the Cauchy--Schwarz inequality and stationarity, as well as finite range
	correlation of the centred variables $ \tau_{ x} b$, we have the bound
	%\simon{main point is that now expression is linear in $ \eta$}
	\begin{equation}\label{eq_b_int_est}
		\begin{aligned}
			\mathbf E_{\rho,L}\bigg[
			\sup_{ t \in [0,T]}
			\bigg|
			\int_{0}^{t}
			\frac{1}{L}
			\sum_{x\in\ZZ}
			\tau_{x}b_{\alpha, L} (\eta(r))
			(\nabla_x^L\varphi_{r})^2\ud r
			\bigg|^{2}
			\bigg]
			 & \lesssim \frac{T^{2}}{L^{2}}
			\EE_{\rho}[ b_{\alpha, L}( \eta)^{2}]
			\sup_{r \in [0,T]}
			\sum_{x \in \ZZ}
			|\nabla_{x}^{L}
			\varphi_{r}|^{4}
			\,.
		\end{aligned}
	\end{equation}
	By the explicit formula \eqref{eq_supp_psi}, we can write with $ N_{0,2}=
		\eta_{0}+ \eta_{1}$
	\begin{equation}\label{eq_phi2_supp}
		\begin{aligned}
			|\psi_{\ff}( 2; \eta)|
			 & \leqslant 
			\frac{ \alpha N_{0,2} ( N_{0 ,2} -1)}{2 (2 \alpha +1 )}
			+ \rho N_{0 , 2} + \rho^{2}
			\lesssim_{\rho} \alpha N_{0,2}^{2}+ N_{0,2} + 1\,.
		\end{aligned}
	\end{equation}
	Moreover, $\big|\rho( \eta_{0}+ \eta_{1}- 2 \rho)
		+
		\alpha(p_L\eta_0+q_L\eta_{1} - \rho)\big|
		\lesssim_{\rho} N_{0,2} + \alpha N_{0,2} + 1$.
		Hence, together with \eqref{eq_phi2_supp}
	\begin{equation*}
		\begin{aligned}
			\EE_{\rho}[ b_{\alpha, L}( \eta)^{2}]
			\lesssim_{\rho}
			\EE_{\rho}[ | \alpha N_{0,2}^{2} + N_{0,2} + 1|^{2}]
			\lesssim_{\rho} \alpha^{-1}\,.
		\end{aligned}
	\end{equation*}
	Combining the decomposition in \eqref{eq_qv_splitting} with \eqref{eq_tight_oneblock} and \eqref{eq_b_int_est} finally yields
	\begin{equation*}
		\begin{aligned}
			\mathbf E_{\rho,L}
			\Big[
				\sup_{t\leq T}
				\Big|
				\langle  M^L(\varphi)\rangle_t
				-
				\mathbf E_{\rho,L}[ \langle  M^L(\varphi)\rangle_t]
				\Big|^2
				\Big]
			\lesssim_{\rho, \varphi}
			\frac{T}{ \alpha^{4} L^{3}} + \frac{T^{ 2}}{\alpha L} \,,
		\end{aligned}
	\end{equation*}
	which together with \eqref{eq_mean_qv_conv} finishes the proof.
\end{proof}

To conclude that any limit point of $ (
	M^L(\varphi))_{L\in\NN}$ in $ D ([0,T] , \RR)$  has continuous trajectories almost surely, it
	remains only to show that the sizes of the jumps vanish as $ L\to\infty $.
Indeed, because discontinuities of $ M^L(\varphi)$ must come from
discontinuities of $u_{L}( t, \varphi) $ (since the integrated drift terms are continuous in time, see \eqref{eq_dynkin_tight}), it
suffices to control
\begin{equation*}
	\begin{aligned}
		u_{L}( t, \varphi; \eta (t) )
		- u_{L}( t, \varphi; \eta(t-))\,.
	\end{aligned}
\end{equation*}
Assume that at time $ t$ a single particle jumps from $ x $ to $ x+1$, then
\begin{equation*}
	\begin{aligned}
		u_{L}( t, \varphi; \eta (t) )
		- u_{L}( t, \varphi; \eta(t-))
		=
		L^{-1/2} ( \varphi_{t,x+1}- \varphi_{t,x})
		= L^{-3/2} \nabla_x^L \varphi_{t}\,.
	\end{aligned}
\end{equation*}
The same bound holds for a particle jumping from $ x $ to $ x-1$.
Hence, for every $ t \geqslant 0 $
\begin{equation}\label{eq_vanish_jumps}
	\begin{aligned}
		| M_t^L(\varphi) - M_{t-}^L(\varphi)|
		\leqslant
		L^{-3/2} |\nabla_x^L \varphi_{t}|
		\leqslant L^{-3/2} \| \varphi' \|_{\infty}\,.
	\end{aligned}
\end{equation}
% as a consequence of
% the mean-value theorem for integration: for some $ y \in \RR $
% \begin{equation*}
% 	\begin{aligned}
% 		\nabla_x^L \varphi_{t}
% 		%=
% 		%L
% 		%( \varphi_{x+1, t}- \varphi_{x,t})
% 		=
% 		L
% 		\big( \varphi ( L^{-1} (x+1) - L^{1/2} j'(\rho) t)
% 		- \varphi( L^{-1}x - L^{1/2} j'(\rho) t)\big)
% 		= \varphi' ( y ) \,.
% 	\end{aligned}
% \end{equation*}
Therefore, any subsequential limit of $ M^L(\varphi)$ is
supported on continuous trajectories.

%SOME OLD VERSION THAT WAS NOT SUFFICIENT, TRIED VIA KOLMOGOROV
%	\gray{
%		By the martingale property and the definition of predictable quadratic variation, we have
%		\begin{equation*}
%			\begin{aligned}
%				\mathbf E_{\rho,L}\big[
%					\big| M_t^L(\varphi) -
%				M_s^L(\varphi) \big|^{2}\big]
%			=
%			\mathbf E_{\rho,L}\big[
%					\big| M_t^L(\varphi)  \big|^{2}\big]
%				-
%				\mathbf E_{\rho,L}\big[
%					\big| M_s^L(\varphi)  \big|^{2}\big]
%				=
%				\mathbf E_{\rho,L}[ \langle M^L(\varphi)\rangle_{t}]
%				-
%				\mathbf E_{\rho,L}[ \langle M^L(\varphi)\rangle_{s}]
%			\end{aligned}
%		\end{equation*}
%		The expectation of the quadratic variation of the martingale takes the
%		form \eqref{eq_expect_qv}, thus, using \eqref{eq_supp_sym}
%		\begin{equation*}
%			\begin{aligned}
%				\mathbf E_{\rho,L}\big[
%					\big| M_t^L(\varphi) -
%				M_s^L(\varphi) \big|^{2}\big]
%				&= 2 L^{2}
%				\int_{s}^{t}  \langle u_{L}(r, \varphi) , -
%				\gens u_{L}(r, \varphi) \rangle_{\rho} \ud r\\
%				&=
%					\alpha \chi( \rho)  \int_{s}^{t} L^{-1}\sum_{x \in \ZZ}
%				( \nabla_x^L \varphi_{r})^{2}\ud r\,.
%		\end{aligned}
%		\end{equation*}
%		The integrand is uniformly bounded in $ L$ and $ r$, thus,
%		\begin{equation*}
%			\begin{aligned}
%				\mathbf E_{\rho,L}\big[
%					\big| M_t^L(\varphi) -
%			M_s^L(\varphi) \big|^{2}\big]
%				\leqslant {\color{orange}\alpha\chi(\rho)
%				\left(
%				\sup_{L\in\NN,r}L^{-1}\sum_x
%				(\nabla_x^L\varphi_r)^2
%				\right)} (t-s)\,.
%			\end{aligned}
%		\end{equation*}
%	}

\subsection*{Tightness of the symmetric drift}

%(Here we can use the fact that integral makes the process continuous).

Direct manipulation of the symmetric term of the
generator (cf. \eqref{eq_sym}) yields
(by explicit cancellations)
\begin{equation}\label{eq_supp_sym}
	\begin{aligned}
		L^{2} \gens u_{L} (t, \varphi)
		 & =
		L^{3/2}
		\frac{1}{2}
		\sum_{x \in \ZZ}
		\Big( \eta_{x} ( \alpha + \eta_{x+1}) \big(
			\varphi_{t,x+1} - \varphi_{t, x}\big)
		+
		\eta_{x} ( \alpha+ \eta_{x-1}) \big( \varphi_{t, x-1}-
			\varphi_{t,x}\big)
		\Big) \\
		 & =
		\frac{ \alpha }{ 2}
		L^{-1/2}
		\sum_{x \in \ZZ} \eta_{x} \Delta_x^L
		\varphi_{t}
		=
		\frac{\alpha}{2}
		u_{L} (t, \Delta^{L} \varphi)\,,
	\end{aligned}
\end{equation}
where in the last step we included the zero term (notice that
$ \sum_{x \in \ZZ} |\varphi_{t,x}| < \infty$ because $ \varphi \in \mS ( \RR)$)
\begin{equation*}
	\begin{aligned}
		L^{-1/2}
		\sum_{x \in \ZZ} \Delta_x^L
		\varphi_{t}
		= L^{3/2}
		\bigg(
		\sum_{x \in \ZZ} \varphi_{t, x+1}
		+
		\sum_{x \in \ZZ} \varphi_{t, x-1}
		-
		2
		\sum_{x \in \ZZ} \varphi_{t, x}
		\bigg)=0\,.
	\end{aligned}
\end{equation*}
Hence,  the symmetric drift has the form
\begin{equation*}
	\begin{aligned}
		\mathcal{D}_t^L(\varphi)
		=
		\frac{\alpha}{2}  L^{-1/2}
		\int_{0}^{t}
		\sum_{x \in \ZZ} \eta_{x}(r) \Delta_x^L
		\varphi_{r} \ud r \,,
	\end{aligned}
\end{equation*}
and therefore
\begin{equation*}
	\begin{aligned}
		\mathbf E_{\rho,L}
		\big[
		\big| \mathcal{D}_t^L(\varphi) - \mathcal{D}_s^L(\varphi) \big|^{2}\big ]
		=
		\frac{\alpha^{2}}{4}
		\mathbf E_{\rho,L}
		\bigg[ \bigg|
		\int_{s}^{t} L^{-1/2} \sum_{x \in \ZZ} \eta_{x}(r) \Delta_x^L \varphi_{r} \ud r
		\bigg|^{2}
		\bigg] \,.
	\end{aligned}
\end{equation*}
Next, we apply the Cauchy--Schwarz inequality (to the time integral)
\begin{equation*}
	\begin{aligned}
		\mathbf E_{\rho,L}
		\big[
		\big| \mathcal{D}_t^L(\varphi) - \mathcal{D}_s^L(\varphi) \big|^{2}\big ]
		 & \leqslant	\frac{\alpha^{2}}{4}  (t-s)
		\int_{s}^{t}
		\EE_{\rho }\Big[ \Big|
		L^{-1/2}
		\sum_{x \in \ZZ} (\eta_{x}- \rho) \Delta_x^L \varphi_{r}
		\Big|^{2}\Big]
		\ud r                                    \\
		 & =
		\frac{\alpha^{2}}{4}   \chi ( \rho)  (t-s )
		\int_{s}^{t}
		L^{-1}
		\sum_{x \in \ZZ}
		\big(\Delta_x^L \varphi_{r}\big)^{2}
		\ud r
		\,,
	\end{aligned}
\end{equation*}
with the last identity being a consequence of $ \nu_{\rho}$ being a
product measure.
Moreover, the integrand on the right-hand side is
uniformly bounded (in $ r$) by Lemma~\ref{lem_usefulestimates_phi}\,(ii).
Hence, using $\chi(\rho)\lesssim_\rho\alpha^{-1}$, we have
\begin{equation}\label{eq_tight_sym}
	\begin{aligned}
		\mathbf E_{\rho,L}
		\big[
		\big| \mathcal{D}_t^L(\varphi) - \mathcal{D}_s^L(\varphi) \big|^{2}\big ]
		\lesssim_{\rho,\varphi}
		\alpha
		(t-s)^{2}\,.
	\end{aligned}
\end{equation}
Thus, the Kolmogorov--Chentsov
theorem yields tightness of the symmetric drift $
	(\mathcal{D}_{\cdot}^{L}(\varphi))_{L \in \NN}$ in $ C ([0,T], \RR)$.

\subsection*{Tightness of the antisymmetric drift}

%(Here we can use the fact that integral makes the process continuous).
Finally, we establish tightness of the sequence $ ( \mZ^{L} ( \varphi))_{L \in \NN}$.
To this end, we start by showing that the integrand of the antisymmetric drift term is
well approximated using $ W_{x}( \eta)$. This is the key step and explains why the
quadratic current term appears.

\begin{lemma}\label{lem_antisym_rewrite}
	We have
	\begin{equation*}
		\begin{aligned}
			%\lim_{L\to\infty}
			\sup_{ t \in [0,T] }
			\EE_{\rho}\bigg[
			\Big|
			(\partial_{t} + L^{3/2} \gena) u_{L}( t, \varphi; \eta)
			-
			\beta \sum_{x \in \ZZ} W_{x}( \eta)
			\nabla_x^L \varphi_{t}\Big|^{2}
			\bigg]
			\lesssim_{\rho , \varphi} \beta^{2}
			\alpha^{-1} L^{-1}
			\,,
		\end{aligned}
	\end{equation*}
	where $ W_{x}$ was defined in \eqref{eq_def_Wandff}.
\end{lemma}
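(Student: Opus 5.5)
We compute the drift exactly and then isolate a remainder that is linear in $\eta$ and carries an extra factor of $L^{-1}$.

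\textbf{Step 1: exact form of the drift.} Since $u_L(t,\varphi)$ is linear in $\eta$, \eqref{eq_flux_A} and a summation by parts give
\begin{equation*}
L^{3/2}\gena u_L(t,\varphi) = L \sum_{x\in\ZZ} j_{x,x+1}(\eta)\,(\varphi_{t,x+1}-\varphi_{t,x}) = \sum_{x\in\ZZ} j_{x,x+1}(\eta)\,\nabla^L_x\varphi_t .
\end{equation*}
The time derivative of the moving frame gives
\begin{equation*}
\partial_t u_L = -L^{1/2}j'(\rho)\,\frac{1}{\sqrt L}\sum_x \varphi'(L^{-1}x - L^{1/2}j'(\rho)t)(\eta_x-\rho)=-j'(\rho)\sum_x \varphi_t'(L^{-1}x)(\eta_x-\rho).
\end{equation*}
We insert the expansion \eqref{eq_current_expansion} with $\tfrac12 j''(\rho)=\beta$. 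The constant $j(\rho)$ drops out because $\sum_x\nabla^L_x\varphi_t=0$, and the quadratic part yields exactly $\beta\sum_x W_x\nabla^L_x\varphi_t$. The linear part becomes
\begin{equation*}
\tfrac12 j'(\rho)\sum_x (\eta_x-\rho)\big(\nabla^L_x\varphi_t+\nabla^L_{x-1}\varphi_t\big).
\end{equation*}

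\textbf{Step 2: the remainder.} The difference in the lemma therefore equals
\begin{equation*}
R_t(\eta) := j'(\rho)\sum_x(\eta_x-\rho)\,g_{t,x}, \qquad g_{t,x}:=\tfrac12(\nabla^L_x\varphi_t+\nabla^L_{x-1}\varphi_t)-\varphi'_t(L^{-1}x).
\end{equation*}
Under the product measure $\nu_\rho$ we have $\EE_\rho[R_t^2]=j'(\rho)^2\chi(\rho)\sum_x g_{t,x}^2$. The symmetric difference quotient $\tfrac{L}{2}(\varphi(y+1/L)-\varphi(y-1/L))$ approximates $\varphi'(y)$ with error $\lesssim L^{-2}\sup_{|z-y|\le 1/L}|\varphi'''(z)|$. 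Summing the squares with Schwartz decay, uniformly in the shift, gives
\begin{equation*}
\sum_x g_{t,x}^2\lesssim_\varphi L^{-4}\cdot L = L^{-3},
\end{equation*}
which is the kind of Riemann-sum estimate in Lemma~\ref{lem_usefulestimates_phi}.

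\textbf{Step 3: constants.} We have $j'(\rho)=\beta(\alpha+2\rho)\lesssim_{\rho,A}\beta$ and $\chi(\rho)\lesssim_\rho\alpha^{-1}$, so
\begin{equation*}
\EE_\rho[R_t^2]\lesssim_{\rho,\varphi}\beta^2\alpha^{-1}L^{-3},
\end{equation*}
uniformly in $t$. This is even better than the claimed bound $\beta^2\alpha^{-1}L^{-1}$.

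The only delicate point is the bookkeeping: the derivative of the shift must be exactly compensated by the averaged two-point linear term, and the sums must be justified as being in $L^2(\nu_\rho)$. The latter holds because $\varphi$ is Schwartz and all moments are finite.
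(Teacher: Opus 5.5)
Your proposal is correct and follows the paper's proof: the same exact drift computation, expansion \eqref{eq_current_expansion} of the current, reindexing of the linear term into $\tfrac12(\nabla_x^L\varphi_t+\nabla_{x-1}^L\varphi_t)$, and variance computation under the product measure $\nu_\rho$. The only difference is that you exploit the symmetry of the central difference quotient (a second-order Taylor bound involving $\varphi'''$) to obtain $\beta^2\alpha^{-1}L^{-3}$. The paper uses only a first-order mean-value bound with $\varphi''$, which gives the stated $\beta^2\alpha^{-1}L^{-1}$; either bound suffices.
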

\begin{proof}
	First, we notice that
	\begin{equation*}
		\begin{aligned}
			(\partial_{t} + L^{3/2} \gena) u_{L}( t, \varphi; \eta)
			 & = -L^{1/2} j' ( \rho)  u_{L}(t,
			\varphi' ; \eta )
			+ L
			\sum_{x \in \ZZ} \varphi_{t,x}
			\big(j_{x-1 , x}( \eta) - j_{x , x+ 1}(
			\eta)\big)                         \\
			 & =
			- j' ( \rho)
			\sum_{x\in\ZZ}\varphi_t'(L^{-1}x)(\eta_x-\rho)
			+ \sum_{x \in \ZZ}
			j_{x, x+1}( \eta)
			\nabla_x^L\varphi_{t} \,,
		\end{aligned}
	\end{equation*}
	where we used the definition of the density fluctuation field and rearranged the sums in the last step.
	Now, the key step is to expand $j_{x, x+1}( \eta)$ around the mean
	current $ j ( \rho) $, using the identity \eqref{eq_current_expansion},
	which we recall here for convenience
	\begin{equation}\label{eq_restate_eqflux}
		\begin{aligned}
			j_{x, x+1} ( \eta)
			= j ( \rho ) + \frac{1}{ 2} j' (\rho) \big(( \eta_{x}-
			\rho) + ( \eta_{x+1}- \rho) \big)
			+ \frac{1}{2} j''( \rho) W_{x}( \eta ) \,.
		\end{aligned}
	\end{equation}
	Notice that $ j'' (\rho) = 2 \beta$.
	Moreover, the contribution of the constant $j(\rho)$--term is exactly zero, since
	$\sum_{x\in\ZZ}\nabla_x^L\varphi_t=0$ by telescoping and the
	Schwartz decay of $\varphi_t$.
	Hence, to conclude the statement of the lemma, it suffices to show that
	the transport term in \eqref{eq_def_u} compensates the linear term in \eqref{eq_restate_eqflux}.
	Namely, we prove
	\begin{equation}\label{eq_supp_anti}
				\begin{aligned}
			\sup_{t\in[0,T]}
			\EE_\rho\Big[
				\Big|
				\sum_{x\in\ZZ}\varphi_t'(L^{-1}x)(\eta_x-\rho)
				-
				\frac12\sum_{x\in\ZZ}
				\nabla_x^L\varphi_t
				\big((\eta_x-\rho)+(\eta_{x+1}-\rho)\big)
				\Big|^2
				\Big]
			\lesssim_{\rho,\varphi}\alpha^{-1}L^{-1}\,.
		\end{aligned}
	\end{equation}
	To this end, we reindex the sum associated to the linear part
	of the current, which yields
	\begin{align*}
		\frac12\sum_{x\in\ZZ}
		\nabla_x^L\varphi_t
		\big((\eta_x-\rho)+(\eta_{x+1}-\rho)\big)
		%&=
		%\frac12\sum_{x\in\ZZ}
		%\nabla_x^L\varphi_t\,(\eta_x-\rho)
		%+
		%\frac12\sum_{x\in\ZZ}
		%\nabla_{x-1}^L\varphi_t\,(\eta_x-\rho)
		%\\
		 & =
		\sum_{x\in\ZZ}
		\frac12\big(
		\nabla_x^L\varphi_t
		+
		\nabla_{x-1}^L\varphi_t
		\big)(\eta_x-\rho) \\
		 & =
		\sum_{x\in\ZZ}
		(\eta_x-\rho)
		\frac{1}{2L^{-1}}
		\int_{-L^{-1}}^{L^{-1}}
		\varphi_t'(L^{-1}x+z)\ud z\,
		\,,
	\end{align*}
	by the fundamental theorem
	of calculus.		%We use
	%\begin{equation*}
	%		\frac12\big(
	%			\nabla_x^L\varphi_t
	%			+
	%			\nabla_{x-1}^L\varphi_t
	%			\big)
	%			=
	%			\frac{
	%			\varphi_t(L^{-1}(x+1))
	%			-
	%			\varphi_t(L^{-1}(x-1))
	%			}{2L^{-1}}
	%			=
	%			\frac1{2L^{-1}}
	%			\int_{-L^{-1}}^{L^{-1}}
	%			\varphi_t'(L^{-1}x+z)\ud z\,.
	%		\end{equation*}
	%	Therefore
	%		\begin{equation*}
	%			\begin{aligned}
	%			&\sum_{x\in\ZZ}\varphi_t'(L^{-1}x)(\eta_x-\rho)
	%			-
	%			\frac12\sum_{x\in\ZZ}
	%			\nabla_x^L\varphi_t
	%			\big((\eta_x-\rho)+(\eta_{x+1}-\rho)\big)\\
	%			&=
	%			\sum_{x\in\ZZ}
	%			\Big(
	%			\varphi_t'(L^{-1}x)
	%			-
	%			\frac{1}{2L^{-1}}
	%			\int_{-L^{-1}}^{L^{-1}}
	%			\varphi_t'(L^{-1}x+z)\ud z \Big)
	%			(\eta_x-\rho)\,.
	%			\end{aligned}
	%		\end{equation*}
Moreover, 
	\begin{align*}
		\Big|
		\varphi_t'(L^{-1}x)
		-
		\frac{1}{2L^{-1}}
		\int_{-L^{-1}}^{L^{-1}}
		\varphi_t'(L^{-1}x+z)\ud z
		\Big|
		 & \leq
		\frac1{2L^{-1}}
		\int_{-L^{-1}}^{L^{-1}}
		\left|
		\varphi_t'(L^{-1}x)
		-
		\varphi_t'(L^{-1}x+z)
		\right|\ud z \\
		 & \leq
		L^{-1}
		\sup_{|y-L^{-1}x|\leq L^{-1}}
		|\varphi_t''(y)|,
	\end{align*}
	which implies that (since $\varphi_t$ is only a spatial translate of the Schwartz function
	$\varphi$)
	uniformly in $t\in[0,T]$
	\begin{equation*}
		\begin{aligned}
			 & \EE_{\rho}\bigg[
			\bigg|
			\sum_{x\in\ZZ}
			\Big(
			\varphi_t'(L^{-1}x)
			-
			\frac{1}{2L^{-1}}
			\int_{-L^{-1}}^{L^{-1}}
			\varphi_t'(L^{-1}x+z)\ud z \Big)
			(\eta_x-\rho)
			\bigg|^{2}\bigg]           \\
			 & \leqslant  \chi ( \rho)
			\sum_{x \in \ZZ}
			L^{-2}
			\sup_{|y-L^{-1}x|\leq L^{-1}}
			|\varphi_t''(y)|^2
			\lesssim_{\varphi} \chi( \rho) L^{-1}
			\lesssim_{\rho,\varphi}\alpha^{-1}L^{-1}\,,
		\end{aligned}
	\end{equation*}
	where the second-to-last inequality follows from a Riemann-sum estimate.
	This yields \eqref{eq_supp_anti}, which together with $j'(\rho)^{2}=\beta^{2}(\alpha+2\rho)^{2} \lesssim \beta^{2}$, concludes the proof.
\end{proof}

Lemma~\ref{lem_antisym_rewrite} allows us to reduce tightness of $ (\mZ^{L} (\varphi))_{L}$ to tightness of a drift with respect to the quadratic current $ W
$.
However, as explained in Section~\ref{sec_intro_quadratic}, the quadratic current $ W$ is singular,
which is why we further introduce an intermediate block approximation of the
drift. Namely, for $ \ell \geqslant \ell_{0}$  we define 
\begin{equation*}
	\begin{aligned}
		\mathcal{B}_{t}^{L,\ell}(\varphi)
		:= \beta \int_{0}^{t} \sum_{x \in \ZZ}
		\nabla_x^L \varphi_{s} \tau_{x}\mQ_{\rho}( \ell ,
		\eta(s))\ud s\,,
	\end{aligned}
\end{equation*}
which can be controlled using its static moments.
This allows us to split
$\mZ_t^L(\varphi)$ into two contributions,
one for the block approximation and one for the remainder term:
\begin{equation*}
	\begin{aligned}
		\mZ_t^L(\varphi)
		=
		\mathcal{B}_{t}^{L,\ell}(\varphi)
		+
		\mathcal{R}_{t}^{L,\ell}(\varphi)
		+o(1)\,,
	\end{aligned}
\end{equation*}
with the block remainder having the exact form
\begin{equation*}
	\mathcal{R}_{t}^{L,\ell}(\varphi)
	:=
	\beta\int_0^t
	\sum_{x\in\ZZ}
	\nabla_x^L\varphi_r\,
	\tau_x\big(W_0(\eta (r))-\mQ_\rho(\ell,\eta(r))\big)\ud r \,,
\end{equation*}
which can be controlled using the second--order Boltzmann--Gibbs
principle.
The vanishing error term $ o(1)$ is controlled in $ L^{2}( \mathbf{P}_{\rho, L})$ uniformly in time by
Lemma~\ref{lem_antisym_rewrite}, see also the proof of Corollary~\ref{cor_asym_increment_estimate} below.
We proceed by proving increment estimates for the block approximation
$ \mathcal{B}^{L,\ell}(\varphi)$ and the block remainder
$\mathcal{R}^{L,\ell}(\varphi)$ separately.
\par\smallskip\smallskip

\textbf{Step 1.}  \textit{Increment estimate for block approximation.}
Once more we proceed by establishing Kolmogorov's tightness criterion.
Indeed, by stationarity with respect to $ \nu_{\rho}$ and the triangle inequality with
respect to $ L^{2}( \nu_{\rho})$, we have
\begin{equation*}
	\begin{aligned}
		\mathbf E_{\rho,L}
		\big[
		\big|
		\mathcal{B}_{t}^{L,\ell}(\varphi)
		-\mathcal{B}_{s}^{L,\ell}(\varphi)\big|^{2}\big]^{1/2}
		\leqslant \beta
		\int_{s}^{t}
		\Big\|
		%\EE_{\rho}\Big[\Big|
		\sum_{x \in \ZZ} \nabla_x^L  \varphi_{r}
		\tau_{x}\mQ_{\rho}( \ell, \eta)
		%\Big|^{2}\Big]
		\Big\|_{L^{2}( \nu_{\rho})}
		\ud r\,.
	\end{aligned}
\end{equation*}
Notice that $ ( \tau_{x}\mQ_{\rho}( \ell, \eta))_{x \in \ZZ}$ only have a
finite range of dependency, given in terms of the block length $ \ell $.
Hence, for every $x $ there are at most $ 2\ell+1 $ block approximations
that $ \tau_{x}\mQ_{\rho}( \ell , \eta )$ depends on.
Moreover, we recall from \eqref{eq_secondmoment_Q} that
$ \EE_{\rho}[ \mQ_{\rho}( \ell , \eta )^{2}]
	\lesssim_\rho \alpha^{-2}\ell^{-2}+\alpha^{-3}\ell^{-3}$.
Therefore, using the simple inequality $ ab \leqslant 2( a^{2}+
	b^{2}) $, we have
\begin{equation}\label{eq_blockcrossvariation}
	\begin{aligned}
		\EE_{\rho}\Big[\Big|
		\sum_{x \in \ZZ} \nabla_x^L  \varphi_{r}
		\tau_{x}\mQ_{\rho}( \ell, \eta)\Big|^{2}\Big]
		 & \lesssim_\rho (\alpha^{-2}\ell^{-2} + \alpha^{-3}\ell^{-3})
		\sum_{x \in \ZZ}
		\sum_{ \substack{y \in \ZZ                                     \\ | y-x| \leqslant \ell}}
		\big| \nabla_x^L  \varphi_{r}
		\nabla_y^L  \varphi_{r} \big|                                  \\
		 & \lesssim_\rho(\alpha^{-2}\ell^{-2} + \alpha^{-3}\ell^{-3})
		\bigg(
		\sum_{x \in \ZZ}
		\sum_{ \substack{y \in \ZZ                                     \\ | y-x| \leqslant \ell}}
		\big| \nabla_x^L
		\varphi_{r}\big|^{2}
		+
		\sum_{y \in \ZZ}
		\sum_{ \substack{x \in \ZZ                                     \\ | y-x| \leqslant \ell}}
		\big| \nabla_y^L
		\varphi_{r}\big|^{2}
		\bigg)\,,
	\end{aligned}
\end{equation}
which implies, in combination with Lemma~\ref{lem_usefulestimates_phi}\,(i),
that\footnote{Notice that a more brutal application of the triangle
inequality would have yielded a factor $ \ell^{-2}L^{2}$ on the right--hand
side which is not sufficient in view of the choice
$ \ell = \lfloor L \sqrt{t-s}\rfloor$
below.}
\begin{equation*}
	\begin{aligned}
		\sup_{r \in [0,T]}
		\EE_{\rho}\Big[\Big|
		\sum_{x \in \ZZ} \nabla_x^L  \varphi_{r}
		\tau_{x}\mQ_{\rho}( \ell, \eta)\Big|^{2}\Big]
		\lesssim_{\rho,\varphi}
		(\alpha^{-2}\ell^{-1} + \alpha^{-3}\ell^{-2}) L \,.
	\end{aligned}
\end{equation*}
Overall, we conclude that for all $s,t \in [0,T]$
\begin{equation}\label{eq_tight_block}
	\begin{aligned}
		\mathbf E_{\rho,L}
		\big[
		\big|
		\mathcal{B}_{t}^{L,\ell}(\varphi)
		-\mathcal{B}_{s}^{L,\ell}(\varphi)\big|^{2}\big]
		\lesssim_{\rho,\varphi}
		\beta^2
		(\alpha^{-2}\ell^{-1} + \alpha^{-3}\ell^{-2}) L (t-s)^{2}\,,
	\end{aligned}
\end{equation}
which is sufficient for Kolmogorov's tightness criterion
when $ \ell = \lfloor L \sqrt{t-s}\rfloor \vee \ell_{0}$.
%provided
%$ \ell  = \Theta (L)$ and $ \ell^{-1}L \lesssim (t-s)^{-(1- \kappa)}  $
%for some $ \kappa> 0 $.
\par\smallskip\smallskip

\textbf{Step 2.}  \textit{Increment estimate for block remainder.}
On the other hand, to prove analogous estimates for the remainder process
$\mathcal{R}^{L,\ell}(\varphi)$, we will use the
second--order Boltzmann--Gibbs principle.
More precisely, we prove that
\begin{equation}\label{eq_asym_tightness_estimate}
	\begin{aligned}
		\mathbf E_{\rho,L}
		\big[
		\big|
		\mathcal{R}_{t}^{L,\ell}(\varphi)
		-\mathcal{R}_{s}^{L,\ell}(\varphi)\big|^{2}\big]
		\lesssim_{\rho,\varphi}
		\beta^2\alpha^{-4}
		(t-s)^{3/2}
		\,,
	\end{aligned}
\end{equation}
for $ \ell=\lfloor L\sqrt{t-s}\rfloor \vee \ell_{0}$.
We consider two cases:
\begin{enumerate}
	\item If $ \lfloor L\sqrt{t-s}\rfloor \geqslant \ell_{0} $, we
	      apply
	      Proposition~\ref{prop_BG} and see that
	      \begin{equation*}
		      \begin{aligned}
			      \mathbf E_{\rho,L}
			      \big[
			      \big|
			      \mathcal{R}_{t}^{L,\ell}(\varphi)
			      -\mathcal{R}_{s}^{L,\ell}(\varphi)\big|^{2}\big]
			       & \lesssim_\rho
			      \frac{\beta^2}{ \alpha^{3} L^{2}} \Big(\frac{1}{ \alpha}
			      +
			      \frac{\ell^{2}}{  \alpha \ell +1 }
			      +
			      \frac{(t-s)L^{2}}{ \ell^2}
			      \Big)
			      %\alpha^{-8}
			      %  \Big(
			      %L^{-2} \ell + \frac{t-s}{\ell^{2}}
			      %\Big)
			      \int_{s}^{t} \sum_{x \in \ZZ}|
			      \nabla_x^L\varphi_{r}|^{2} \ud r\,.
		      \end{aligned}
	      \end{equation*}
	      By Lemma~\ref{lem_usefulestimates_phi}\,(i), we have
	      \begin{equation}\label{eq_quant_blockremainder1}
		      \begin{aligned}
			      \mathbf E_{\rho,L}
			      \big[
			      \big|
			      \mathcal{R}_{t}^{L,\ell}(\varphi)
			      -\mathcal{R}_{s}^{L,\ell}(\varphi)\big|^{2}\big]
			      \lesssim_{\rho,\varphi}
			      \frac{\beta^2(t-s)}{ \alpha^{3} L} \Big(\frac{1}{ \alpha}
			      +
			      \frac{\ell^{2}}{  \alpha \ell +1 }
			      +
			      \frac{(t-s)L^{2}}{ \ell^2}
			      \Big)
			      %\beta^2\alpha^{-8}
			      %\left(
			      %		\frac{ \ell}{L} (t-s) + \frac{L(t-s)^{2}}{
			      %		\ell^{2}}\right)
			      \,.
		      \end{aligned}
	      \end{equation}
	      Finally, because
	      $ \ell = \lfloor L \sqrt{t-s}\rfloor \geqslant \ell_{0}$,
	      the previous display can be upper bounded by
	      \begin{equation*}
		      \begin{aligned}
			      \mathbf E_{\rho,L}
			      \big[
			      \big|
			      \mathcal{R}_{t}^{L,\lfloor L\sqrt{t-s}\rfloor}(\varphi)
			      -\mathcal{R}_{s}^{L,\lfloor L\sqrt{t-s}\rfloor}(\varphi)
			      \big|^{2}\big]
			      \lesssim_{\rho,\varphi}
			      \beta^2\alpha^{-4}
			      (t-s)^{3/2}
			      \,.
		      \end{aligned}
	      \end{equation*}

	\item If $ L\sqrt{t-s} < \ell_{0} $, then we directly estimate the expression using the triangle
	      inequality
	      \begin{equation*}
		      \begin{aligned}
			      \mathbf E_{\rho,L}
			      \big[
			      \big|
			      \mathcal{R}_{t}^{L,\ell}(\varphi)
			      -\mathcal{R}_{s}^{L,\ell}(\varphi)\big|^{2}\big]^{1/2}
			      \leqslant
			      \beta
			      \int_{s}^{t}
			      %\EE_{\rho}\Big[
			      \Big\|
			      \sum_{x \in \ZZ} \nabla_x^L  \varphi_{r}
			      \tau_{x}\big( W ( \eta) - \mQ_{\rho}( \ell,
			      \eta)\big)\Big\|_{L^{2}( \nu_{\rho})}
			      %\Big]
			      \ud r\,.
		      \end{aligned}
	      \end{equation*}
	      Now, we have (uniformly in $r$ and $ L$)
	      \begin{equation*}
		      \begin{aligned}
			       & \EE_{\rho}\Big[\Big|
			      \sum_{x \in \ZZ} \nabla_x^L  \varphi_{r}
			      \tau_{x}\big( W ( \eta) - \mQ_{\rho}( \ell,
			      \eta)\big)\Big|^{2}
			      \Big]                   \\
			       & 	\leqslant 2\,
			      \EE_{\rho}\Big[\Big|
			      \sum_{x \in \ZZ} \nabla_x^L  \varphi_{r}
			      W_{x} ( \eta) \Big|^{2}
			      \Big]
			      +
			      2\,
			      \EE_{\rho}\Big[\Big|
			      \sum_{x \in \ZZ} \nabla_x^L  \varphi_{r}
			      \tau_{x}\mQ_{\rho}( \ell,
			      \eta)\Big|^{2}
			      \Big]                   \\
			       & \lesssim_{\rho}
			      \Big( \ell_{0}\,
			      \alpha^{-2} +
			      \alpha^{-3}\ell^{-1}
			      \Big)
			      \sum_{x \in \ZZ} | \nabla_x^L
			      \varphi_{r}|^{2}	\,,
		      \end{aligned}
	      \end{equation*}
	      where in the last inequality we performed the same estimate as in
	      \eqref{eq_blockcrossvariation} which used that both $ W( \eta) $ and $ \mQ_{\rho} (\ell, \eta)$ are
	      supported on blocks of size $  \ell_{0} $ and $\ell $, respectively,
	      and therefore only $\ell$--many crossvariation terms remain.
	      We also recall \eqref{eq_secondmoment_Q} and that $\EE_{\rho}[ W ( \eta)^{2}] =
		      \chi ( \rho)^{2}\lesssim_{\rho} \alpha^{-2}$.
	      Thus, by Lemma~\ref{lem_usefulestimates_phi}\,(i), we conclude that
	      \begin{equation*}
		      \begin{aligned}
			      \mathbf E_{\rho,L}
			      \big[
			      \big|
			      \mathcal{R}_{t}^{L,\ell}(\varphi)
			      -\mathcal{R}_{s}^{L,\ell}(\varphi)\big|^{2}\big]
			      \lesssim_{\rho,\varphi}
			      \beta^2\alpha^{- 4}
			      (t-s)^{3/2}\,, \quad \forall \ell \geqslant
			      \ell_{0}\,.
		      \end{aligned}
	      \end{equation*}

\end{enumerate}

\textbf{Step 3.}  \textit{Summary of antisymmetric drift.}
For later reference, it will be convenient to summarise the obtained bound for the
antisymmetric drift, which is an immediate consequence of the bounds obtained for
$\mathcal{B}^{L,\ell}(\varphi)$, $\mathcal{R}^{L,\ell}(\varphi)$, and
Lemma~\ref{lem_antisym_rewrite}:

\begin{corollary}[Antisymmetric increment estimate]
	\label{cor_asym_increment_estimate}
	For every $\varphi\in\mS(\RR)$
	there exists a $ L_{0} \in \NN$ such that
	\begin{equation*}
		\sup_{L \geqslant L_{0}}
		\mathbf E_{\rho,L}
		\big[
			|\mZ_t^L(\varphi)-\mZ_s^L(\varphi)|^2
			\big]
		\lesssim_{\rho,\varphi,T}
		\beta^2\alpha^{-4}
		(t-s)^{3/2}\,, \quad \forall 0\leq s<t\leq T\,.
	\end{equation*}
\end{corollary}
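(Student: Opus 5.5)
We will combine the three ingredients already established: Lemma~\ref{lem_antisym_rewrite}, the block-approximation increment bound \eqref{eq_tight_block}, and the block-remainder bound \eqref{eq_asym_tightness_estimate}. Fix $0\leq s<t\leq T$ and choose $\ell=\lfloor L\sqrt{t-s}\rfloor\vee\ell_0$. We decompose exactly
\begin{equation*}
	\mZ_t^L(\varphi)-\mZ_s^L(\varphi)
	=
	\big(\mathcal{B}_{t}^{L,\ell}-\mathcal{B}_{s}^{L,\ell}\big)(\varphi)
	+
	\big(\mathcal{R}_{t}^{L,\ell}-\mathcal{R}_{s}^{L,\ell}\big)(\varphi)
	+
	\int_s^t E^L(r,\eta(r))\ud r\,.
\end{equation*}
Here the error integrand is
\begin{equation*}
	E^L(r,\eta):=(\partial_r+L^{3/2}\gena)u_L(r,\varphi;\eta)-\beta\sum_{x}W_x(\eta)\nabla^L_x\varphi_r\,.
\end{equation*}
This identity uses $W_x=\tau_x\ff$, so the $\mQ_\rho$ terms cancel between $\mathcal B$ and $\mathcal R$. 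By $(a+b+c)^2\leq 3(a^2+b^2+c^2)$, it suffices to bound each piece by $\beta^2\alpha^{-4}(t-s)^{3/2}$ up to constants depending on $\rho,\varphi,T$.

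\textbf{The error term.} We apply Minkowski's inequality in time, then stationarity, then Lemma~\ref{lem_antisym_rewrite}. This gives
\begin{equation*}
	\mathbf E_{\rho,L}\Big[\Big|\int_s^t E^L\ud r\Big|^2\Big]\leq (t-s)^2\sup_r\EE_\rho[|E^L(r)|^2]\lesssim \beta^2\alpha^{-1}L^{-1}(t-s)^2\,.
\end{equation*}
Since $\alpha\leq A$, we have $\alpha^{-1}\lesssim_A\alpha^{-4}$, and $(t-s)^{2}\leq T^{1/2}(t-s)^{3/2}$. Hence this term is already of the required form for every $L\geq 1$.

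\textbf{The block approximation.} Here \eqref{eq_tight_block} gives the bound $\beta^2(\alpha^{-2}\ell^{-1}+\alpha^{-3}\ell^{-2})L(t-s)^2$, and we split into two cases.

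If $L\sqrt{t-s}\geq \ell_0$, then $\ell\geq \tfrac12 L\sqrt{t-s}$. Therefore $L\ell^{-1}\lesssim (t-s)^{-1/2}$, and $L\ell^{-2}\lesssim L^{-1}(t-s)^{-1}\leq \ell_0^{-1}(t-s)^{-1/2}$. Both contributions are then bounded by $\beta^2\alpha^{-3}(t-s)^{3/2}$.

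If $L\sqrt{t-s}<\ell_0$, then $\ell=\ell_0$ and $L<\ell_0(t-s)^{-1/2}$. This gives $L(t-s)^2\lesssim_{\ell_0}(t-s)^{3/2}$ directly.

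In both cases we obtain $\beta^2\alpha^{-3}(t-s)^{3/2}\lesssim_A\beta^2\alpha^{-4}(t-s)^{3/2}$.

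\textbf{The remainder.} This is exactly \eqref{eq_asym_tightness_estimate}, proved above in both cases via Proposition~\ref{prop_BG} and Lemma~\ref{lem_usefulestimates_phi}\,(i).

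\textbf{Main obstacle.} The only delicate point is checking that the case distinction $\ell=\lfloor L\sqrt{t-s}\rfloor$ versus $\ell=\ell_0$ yields the uniform exponent $3/2$ in every piece. In \eqref{eq_quant_blockremainder1} one must check that the term $\ell^2/(\alpha\ell+1)\leq\ell^2$ produces $(t-s)L^{-1}\ell^2\lesssim L(t-s)^2$, which is not obviously bounded by $(t-s)^{3/2}$. A cleaner route is $\ell^2/(\alpha\ell+1)\leq \ell/\alpha$, which gives $(t-s)\ell/(\alpha^4 L)\lesssim\alpha^{-4}(t-s)^{3/2}$; this is the source of the power $\alpha^{-4}$. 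The threshold $L_0$ is needed only so that Lemma~\ref{lem_usefulestimates_phi} holds uniformly, i.e. so that the Riemann sums $L^{-1}\sum_x|\nabla^L_x\varphi_r|^2$ are bounded uniformly in $r\in[0,T]$. Collecting the three bounds proves the corollary.
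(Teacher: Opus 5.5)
Your proposal is correct and follows the paper's proof. You use the same exact splitting of $\mZ_t^L(\varphi)-\mZ_s^L(\varphi)$ into the block increment, the Boltzmann--Gibbs remainder increment, and the error from Lemma~\ref{lem_antisym_rewrite}, with $\ell=\lfloor L\sqrt{t-s}\rfloor\vee\ell_0$; you just spell out the case analysis for \eqref{eq_tight_block} and the bound $\ell^2/(\alpha\ell+1)\leqslant \ell/\alpha$ that the paper uses implicitly. One small correction: Lemma~\ref{lem_usefulestimates_phi}\,(i) already holds uniformly over all $L\in\NN$, so it does not require a threshold $L_0$, and your argument in fact works with $L_0=1$.
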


\begin{proof}
	We claim that
	\begin{equation*}
		\mZ_t^L(\varphi)-\mZ_s^L(\varphi)
		=
		\big(\mathcal{B}_{t}^{L,\ell}(\varphi)
		-\mathcal{B}_{s}^{L,\ell}(\varphi)\big)
		+
		\big(\mathcal{R}_{t}^{L,\ell}(\varphi)
		-\mathcal{R}_{s}^{L,\ell}(\varphi)\big)
		+
		o_{L^{2}( \PP),s,t}(1)\,,
	\end{equation*}
	for every $ \ell \geqslant \ell_{ 0}$.
	The first two increments on the right are controlled by \eqref{eq_tight_block} and \eqref{eq_asym_tightness_estimate} with $ \ell = \lfloor L
	\sqrt{t-s}\rfloor \vee \ell_{0}$. It is only left to justify the stated control of the last term.
	Indeed,  the squared $L^2(\mathbf P_{\rho,L})$-norm of the last
	term is uniformly bounded (in $L$ and $0\leq s<t\leq T$) by
	\begin{equation*}
		\begin{aligned}
			\beta^{2} \alpha^{-1}
			C_{\rho,\varphi}
			L^{-1}(t-s)^2
			\lesssim_{\rho,\varphi,T}
			\beta^{2} \alpha^{-1}
			(t-s)^{3/2}\,,
		\end{aligned}
	\end{equation*}
	where we  applied Lemma~\ref{lem_antisym_rewrite} and the Cauchy--Schwarz
	inequality.
	% We use the estimate \eqref{eq_tight_block} (with $ \ell = \lfloor L
	% \sqrt{t-s}\rfloor$, provided it is larger than $ \ell_{0}$) for the block part, which gives
	% \begin{equation*}
	% 	\mathbf E_{\rho,L}
	% 	\big[
	% 		|\mathcal{B}_{t}^{L,\ell}(\varphi)
	% 		-\mathcal{B}_{s}^{L,\ell}(\varphi)|^2
	% 		\big]
	% 	\lesssim_{\rho,\varphi}
	% 	\beta^2\alpha^{-3}
	% 	(t-s)^{3/2}.
	% \end{equation*}
	% In the
	% regime $L\sqrt{t-s} < \ell_0$ one takes $\ell=\ell_0$ in \eqref{eq_tight_block}  which yields the same bound.
	% Together with the estimate \eqref{eq_asym_tightness_estimate} for the block remainder,
	% this concludes the claim.
\end{proof}

\subsection*{Conclusion of the tightness arguments}

Overall, we established tightness for each of the components in \eqref{eq_dynkin_tight}, when tested against a
Schwartz function $ \varphi$. By Mitoma's theorem, this yields tightness (with continuous limit
trajectories) in $
	D([0,T] , \mS' ( \RR ) )$.
% \cite{Mitoma83}.
In particular, $ u_{L}$ is tight in $D([0,T] , \mS' ( \RR ) )$, and
every limit point is supported on $ C([0,T] , \mS ' (\RR ) )$.

\section{Identification of limit points}

Energy solutions were introduced by Gon{\c c}alves and Jara
\cite{GoncJara14}, who proved existence of solutions to the stochastic Burgers equation through microscopic approximation. 
Later, uniqueness was also established in the stationary setting \cite{GubPerk}.
Energy solutions provide a
martingale formulation of the stochastic Burgers equation
\eqref{eq_burgers} in which the nonlinearity is defined by a renormalised
mollification procedure.
Thus, after tightness, the remaining task is to
identify each limit point of $ ( u_{L})_{L \in \NN}$ as an energy solution to the stochastic Burgers
equation.

\subsection{Energy solutions of the stochastic Burgers equation}\label{sec_energy_solutions}

First, we recall the definition of energy solutions to the stochastic Burgers equation in the controlled process formulation of \cite{GubPerk}. 
% There the field is controlled by the Ornstein--Uhlenbeck part of the
% equation, while the nonlinear drift is the limit of the regularised quadratic
% fields $\mN^\delta$ below.

\begin{definition}[Controlled process]\label{def_controlled_process}
	Let $(u, \mZ)$ be a pair of processes with trajectories in
	$C([0,T],\mS'(\RR))$ such that $\mZ_0=0$.
	We say the pair is controlled by the
	Ornstein--Uhlenbeck part of \eqref{eq_burgers} if the following conditions
	hold.
	\begin{enumerate}
		\item \emph{Stationarity.} For every $t \in [0,T]$, $u_{t}$ is a spatial
		      white noise of variance $\chi(\rho)$.
		      %, that is
		      %\begin{equation*}
		      %			\begin{aligned}
		      %					\EE\big[ u_{t}(\varphi)u_{t}(\psi)\big]
		      %						=
		      %						\chi(\rho) \int_{\RR}\varphi(x)\psi(x)\ud x\,,
		      %					\qquad \varphi,\psi \in \mS(\RR).
		      %				\end{aligned}
		      %			\end{equation*}

		\item \emph{Forward martingale problem.} For every $\varphi \in \mS(\RR)$,
		      the process
		      \begin{equation*}
			      \begin{aligned}
				      M_{t}(\varphi)
				      :=
				      u_{t}(\varphi)-u_{0}(\varphi)
				      - \frac{\alpha}{2}
				      \int_{0}^{t} u_{s}(\partial_{x}^{2}\varphi)\ud s
				      - \mZ_t(\varphi)
			      \end{aligned}
		      \end{equation*}
		      is a continuous martingale with respect to the natural filtration of
		      $(u,\mZ)$, with quadratic variation
		      \begin{equation}\label{eq_energy_qv}
			      \begin{aligned}
				      \langle M(\varphi)\rangle_{t}
				      =
				      t(\alpha\rho+\rho^{2})
				      \| \partial_{x}\varphi \|_{L^{2}(\RR)}^{2}\,.
			      \end{aligned}
		      \end{equation}

		\item \emph{Vanishing quadratic variation.}
		      For every $\varphi \in \mS(\RR)$,
		      the process $t\mapsto \mZ_t(\varphi)$ has zero quadratic
		      variation in the sense of Russo--Vallois
		      \cite{RussoVallois93}, that is
		      \begin{equation}\label{eq_energy_zero_qv}
			      \begin{aligned}
				      \frac{1}{\delta}\int_{0}^{T-\delta}
				      \big|
				      \mZ_{s+\delta}(\varphi)-\mZ_s(\varphi)
				      \big|^{2}
				      \ud s
				      \longrightarrow 0
			      \end{aligned}
		      \end{equation}
		      in probability, as $\delta \to 0$.

		\item \emph{Time reversal.} For every $\varphi \in \mS(\RR)$, the process
		      \begin{equation*}
			      \begin{aligned}
				      \overleftarrow{M}_{t}(\varphi)
				      :=
				      u_{T-t}(\varphi)-u_{T}(\varphi)
				      - \frac{\alpha}{2}
				      \int_{0}^{t} u_{T-s}(\partial_{x}^{2}\varphi)\ud s
				      +\big( \mZ_T(\varphi)-\mZ_{T-t}(\varphi)\big)
			      \end{aligned}
		      \end{equation*}
		      is a continuous martingale with quadratic variation \eqref{eq_energy_qv}, with respect to the backward filtration
		      \begin{equation*}
			      \overleftarrow{\mF}_t
			      :=
			      \sigma\big(
			      u_{T-s}(\varphi),
			      \mZ_T(\varphi)-\mZ_{T-s}(\varphi):
			      0\leqslant s\leqslant t,\
			      \varphi\in\mS(\RR)
			      \big)\,.
		      \end{equation*}
	\end{enumerate}
\end{definition}

Let $(u,\mZ)$ be a controlled process.
For every $ \delta >0 $ and $ x \in \RR $, we define the box kernel
\begin{equation*}
	i_{\delta,x}(y)
	:=
	\delta^{-1}\mathds{1}_{[x,x+\delta)}(y),
	\qquad \forall y\in\RR\, ,
\end{equation*}
and the associated box-regularised
quadratic drift
\begin{equation}\label{eq_energy_Adelta}
	\begin{aligned}
		\mN_{t}^{\delta}(\varphi)
		:=
		-
		\int_{0}^{t}\int_{\RR}
		u_{r}(i_{\delta,x})^{2}
		\partial_{x}\varphi(x)\ud x \ud r\,,
		\qquad
		t \in [0, T]
		\,.
	\end{aligned}
\end{equation}
Since the box kernel belongs to $L^{1}(\RR)\cap L^{2}(\RR)$ and has integral
one, the real-line energy estimate for controlled processes
\cite[Proposition~2.2]{GubPerk} gives, for every $\varphi\in\mS(\RR)$, the existence of the limit
\begin{equation}\label{eq_energy_A}
	\begin{aligned}
		\mN_{t}(\varphi)
		:=
		\lim_{\delta \to 0}
		\mN_{t}^{\delta}(\varphi)\,,
	\end{aligned}
\end{equation}
uniformly on compacts $[0,T]$ in probability\footnote{We write $ \mathbf P $ for the probability
	measure of the underlying probability space, and don't classify it any further.}.
This limit is independent of the choice of box kernel $ i_{\delta, x}$ we made
and represents $\partial_x(u^2)(\varphi)$ in the
distributional sense \cite[Proposition~2.2]{GubPerk}.

\begin{definition}[Stationary energy solution]\label{def_energy_solution}
	An $\mS'(\RR)$-valued process $u$ is called a stationary energy solution of the
	stochastic Burgers equation
	\eqref{eq_burgers} if there exists a process $\mZ$ such that
	$(u,\mZ)$ is controlled by the Ornstein--Uhlenbeck equation and
	\begin{equation*}
		\begin{aligned}
			\mZ_t(\varphi)=-\beta\mN_{t}(\varphi)\,,\   \text{almost
				surely, } \
			\qquad \forall  t\in[0,T],\ \varphi\in\mS(\RR).
		\end{aligned}
	\end{equation*}
\end{definition}

Uniqueness of such energy solutions was established in
\cite[Theorem~2.4]{GubPerk}:

\begin{theorem}[Uniqueness of energy solutions, \cite{GubPerk}]\label{thm_energy_uniqueness}
	For fixed $\alpha>0$, $\rho>0$ and $\beta\in \RR$, stationary energy
	solutions to the stochastic Burgers equation \eqref{eq_burgers} are unique in law.
\end{theorem}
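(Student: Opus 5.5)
The plan is to reduce the statement to the uniqueness theorem of \cite[Theorem~2.4]{GubPerk}, which is formulated for a normalised stochastic Burgers equation of the form $\partial_t v = \Delta v + \lambda\partial_x(v^2) + \sqrt 2\,\partial_x\xi$ with stationary distribution the spatial white noise of unit variance. I will not reprove their result (Cole--Hopf via the Itô trick, then uniqueness for the stochastic heat equation); I only transport the notion of controlled process and energy solution through a deterministic linear change of variables.

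\textbf{Step 1: fluctuation--dissipation.} Writing $D:=\alpha\rho+\rho^2$ for the noise strength and $\nu:=\alpha/2$ for the viscosity, the invariant white noise of the linear part has variance $D/(2\nu)=D/\alpha=\chi(\rho)$. This matches condition~1 of Definition~\ref{def_controlled_process}, so the equation is of the stationary type treated in \cite{GubPerk}.

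\textbf{Step 2: rescaling.} Given a controlled pair $(u,\mZ)$ for \eqref{eq_burgers}, set, for constants $a,b>0$ and every test function $\varphi$,
\begin{equation*}
	v_t(\varphi):=\chi(\rho)^{-1/2}\,u_{at}(\varphi(\cdot/b))\,b^{-1},
	\qquad
	\widetilde{\mZ}_t(\varphi):=\chi(\rho)^{-1/2}\,\mZ_{at}(\varphi(\cdot/b))\,b^{-1}.
\end{equation*}
The constants are chosen as $a\nu=b^2$, so that the Laplacian acquires coefficient one; the quadratic variation then becomes $2t\|\partial_x\varphi\|_{L^2}^2$ automatically, by the identity in Step 1. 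I will check the four items of Definition~\ref{def_controlled_process} one at a time.
\begin{enumerate}
	\item The white-noise marginal transfers to unit variance.
	\item The forward martingale is rescaled deterministically in time. The quadratic variation \eqref{eq_energy_qv} becomes $2t\|\partial_x\varphi\|^2$.
	\item Zero quadratic variation \eqref{eq_energy_zero_qv} is invariant under a deterministic time change and multiplication by a constant.
	\item The backward filtration is mapped onto the backward filtration of the rescaled pair.
\end{enumerate}
Next, $\mN^\delta$ in \eqref{eq_energy_Adelta} transforms into the box-regularised drift of $v$ with box size $\delta/b$ times an explicit constant. By the kernel-independence in \cite[Proposition~2.2]{GubPerk}, the identity $\mZ=-\beta\mN$ becomes $\widetilde{\mZ}=\lambda\widetilde{\mN}$ with an explicit $\lambda=\lambda(\beta,\alpha,\rho)$.

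\textbf{Step 3: uniqueness.} Step 2 gives a bijection, measurable on path space, between stationary energy solutions of \eqref{eq_burgers} and those of the normalised equation. Uniqueness in law then follows from \cite[Theorem~2.4]{GubPerk}. The case $\beta=0$ is the linear Ornstein--Uhlenbeck martingale problem, which is handled classically.

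The main obstacle is bookkeeping rather than any new idea. First, one must make sure that the definition above, which is posed on $\RR$ with the box kernel $i_{\delta,x}$, coincides after rescaling with exactly the formulation used in \cite{GubPerk}, including the backward martingale with respect to the joint filtration of $(u,\mZ)$. Second, one must verify that the constant $\lambda$, and in particular its sign, is allowed there. Since \cite{GubPerk} treats any real $\lambda$ (a sign flip corresponds to $x\mapsto -x$), I expect no restriction.
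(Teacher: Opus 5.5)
Your route is the same as the paper's, which gives no argument beyond invoking \cite[Theorem~2.4]{GubPerk} for the equation with coefficients $\tfrac{\alpha}{2}$, $-\beta$, $\sqrt{\alpha\rho+\rho^{2}}$ and leaves the reduction to normalised constants implicit. Making that rescaling explicit is the right thing to do, but the rescaling in Step~2 is wrong as written for $b\neq 1$.

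\textbf{The scaling error.} Spatial white noise scales with exponent $-1/2$, not $-1$. For $v_t(\varphi)=\chi(\rho)^{-1/2}b^{-1}u_{at}(\varphi(\cdot/b))$ you get $\EE[v_t(\varphi)^2]=b^{-1}\|\varphi\|_{L^2(\RR)}^2$. With $a\alpha/2=b^2$, the rescaled bracket is
\[
	\chi(\rho)^{-1}b^{-2}\,(\alpha\rho+\rho^{2})\,a t\,\big\|\partial_x\big(\varphi(\cdot/b)\big)\big\|_{L^2(\RR)}^2
	=\frac{2}{b}\,t\,\|\partial_x\varphi\|_{L^2(\RR)}^2\,.
\]
So your items~1 and~2 fail unless $b=1$, and the bracket does not become $2t\|\partial_x\varphi\|^2_{L^2(\RR)}$ ``automatically''.

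\textbf{The fix.} Use the prefactor $b^{-1/2}$ instead of $b^{-1}$, i.e.\ $v(t,y)=\chi(\rho)^{-1/2}b^{1/2}u(at,by)$. Then, by your Step~1, unit variance and the bracket $2t\|\partial_x\varphi\|^2_{L^2(\RR)}$ hold for every $b$ with $a\alpha/2=b^2$. A direct computation gives $\widetilde{\mZ}=\lambda\widetilde{\mN}$ with $\lambda=-2\beta\sqrt{b\,\chi(\rho)}/\alpha$, with box size $\delta/b$ as you say.

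\textbf{What the free parameter buys.} With the corrected prefactor, the remaining freedom in $b$ also settles your second worry. For $\beta\neq 0$, choosing $b=\alpha^2/(4\beta^2\chi(\rho))$ gives $|\lambda|=1$, and the sign is removed by $v\mapsto -v$ (or $x\mapsto -x$). You then need the cited uniqueness theorem only for the fully normalised equation, rather than relying on it covering arbitrary $\lambda$. With your current normalisation you are forced to $b=1$, $a=2/\alpha$, which only works if \cite{GubPerk} is indeed formulated for general $\lambda$.
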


Hence, the following proposition suffices to conclude the main result in
Theorem~\ref{thm_main}.

\begin{proposition}[Identification of subsequential limits]\label{prop_identification_limit_points}
	Any subsequential limit of $( u_{L} )_{L \in \NN}$ in
	$D([0,T],\mS'(\RR))$ is a
	stationary energy solution of the stochastic Burgers equation
	\eqref{eq_burgers}.
\end{proposition}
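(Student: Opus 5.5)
Fix a subsequence along which $u_L \to u$ in $D([0,T],\mS'(\RR))$. By the tightness section we may pass to a further subsequence along which the whole tuple $(u_L,\mathcal D^L,\mZ^L,M^L)$ converges jointly, with limit $(u,\mathcal D,\mZ,M)$ supported on continuous paths. The plan is to show that $(u,\mZ)$ is controlled by the Ornstein--Uhlenbeck part (Definition~\ref{def_controlled_process}) and that $\mZ=-\beta\mN$. Proposition~\ref{prop_identification_limit_points} then follows, and with Theorem~\ref{thm_energy_uniqueness} it yields Theorem~\ref{thm_main}.

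\textbf{Stationarity.} For each $t$, $u_L(t,\cdot)$ is a linear field of i.i.d.\ centred variables with variance $\chi(\rho)$, so the classical CLT gives the white-noise marginal. The moving frame does not matter because Lebesgue measure is translation invariant.

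\textbf{Linear drift.} By \eqref{eq_supp_sym}, $\mathcal D^L_t(\varphi)=\frac{\alpha}{2}\int_0^t u_L(r,\Delta^L\varphi_r)\ud r$, and this converges to $\frac{\alpha}{2}\int_0^t u_r(\partial_x^2\varphi)\ud r$ since $\Delta^L\varphi\to\varphi''$ in $\ell^2$. Here I tacitly work in the moving frame; since the limit object is stated in the frame of \eqref{eq_def_u}, test functions $\varphi_t$ are absorbed into the definition of $u_L$.

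\textbf{Martingale and quadratic variation.} To show $M$ is a martingale with respect to the filtration of $(u,\mZ)$, I use uniform $L^2$ (indeed $L^{2+}$) bounds. These come from Lemma~\ref{lem_quant_qv}, Burkholder's inequality and the jump bound \eqref{eq_vanish_jumps}, so the martingale property passes to the limit through bounded continuous functionals of the past. Lemma~\ref{lem_quant_qv} then gives $\langle M^L\rangle_t\to t\alpha\chi(\rho)\|\varphi'\|^2=t(\alpha\rho+\rho^2)\|\varphi'\|^2$ in $L^2$. Since the jumps vanish, $(M^L)^2-\langle M^L\rangle$ converges to a martingale, which identifies \eqref{eq_energy_qv}.

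\textbf{Energy condition.} The zero quadratic variation \eqref{eq_energy_zero_qv} follows from Corollary~\ref{cor_asym_increment_estimate}. By Fatou, $\mathbf E|\mZ_t-\mZ_s|^2\lesssim (t-s)^{3/2}$, so $\delta^{-1}\int_0^{T-\delta}\mathbf E|\mZ_{s+\delta}-\mZ_s|^2\ud s\lesssim \delta^{1/2}\to0$.

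\textbf{Time reversal.} Use the backward martingale formulation. The time-reversed stationary process has generator $\gen_L^*=L^2\gens-L^{3/2}\gena$ from \eqref{eq_adjointness}. The same Dynkin decomposition therefore holds with the antisymmetric drift reversing sign, and the same limit argument applies.

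\textbf{Identification of the nonlinearity (main obstacle).} By Lemma~\ref{lem_antisym_rewrite} and Proposition~\ref{prop_BG} with $\ell=\lfloor \delta L\rfloor$, the difference $\mZ^L_t(\varphi)-\beta\int_0^t\sum_x\nabla^L_x\varphi_r\,\tau_x\mQ_\rho(\ell,\eta(r))\ud r$ has second moment bounded by
\[
\frac{1}{\alpha^3L^2}\Big(\frac1\alpha+\frac{\ell^2}{\alpha\ell+1}+\frac{tL^2}{\ell^2}\Big)L\|\varphi'\|^2,
\]
which is $O(\delta+ t/(\delta^2 L))$. This is small when first $L\to\infty$ and then $\delta\to0$; the bound must be kept uniform so the limits can be interchanged.

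Next I rewrite the block term. We have $\overline\eta^{\ell}_x-\rho=L^{-1/2}u_L(r,i_{\delta,x/L})$ up to the moving frame and boundary rounding, so
\[
\beta\sum_x\nabla_x^L\varphi_r\,\tau_x\mQ_\rho=\beta\int\big(u_L(r,i_{\delta,y})^2-\chi(\rho)/\delta\big)\varphi'(y)\ud y+o(1).
\]
The constant $\chi(\rho)/\delta$ integrates to zero against $\varphi'$. Since $i_{\delta,y}\notin\mS$, continuity of $u\mapsto u(i_{\delta,y})$ requires approximating $i_{\delta,y}$ by Schwartz functions, with errors controlled uniformly in $L$ through the Gaussian-type second and fourth moments. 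Passing $L\to\infty$ gives $\mZ_t=-\beta\mN^\delta_t+R_\delta$ with $\mathbf E|R_\delta|^2\to0$, and \eqref{eq_energy_A} as $\delta\to0$ yields $\mZ=-\beta\mN$.

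I expect the hardest parts to be this diagonal limit and the sign conventions relating $\mN^\delta$ to $\nabla\varphi$.
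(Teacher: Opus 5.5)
Your proposal is correct and follows essentially the same route as the paper. You first establish controlledness of the joint limit: a Lindeberg CLT for the one-time marginals, the $\Delta^L$ replacement for the linear drift, Lemma~\ref{lem_quant_qv} with a fourth-moment bound for the martingale and its bracket, Corollary~\ref{cor_asym_increment_estimate} with Fatou for the zero quadratic variation, and the adjoint generator for time reversal. You then identify $\mZ=-\beta\mN$ via Lemma~\ref{lem_antisym_rewrite}, Proposition~\ref{prop_BG} with $\ell=\lfloor\delta L\rfloor$, the box-kernel rewriting of $\tau_x\mQ_\rho$ and a mollification of $i_{\delta,x}$.

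The remaining issues are cosmetic. Your displayed Boltzmann--Gibbs error is missing the factor $t$ and the $O(1/L)$ terms, though these vanish anyway. No interchange of limits is needed, since one simply takes $L\to\infty$ before $\delta\to0$.
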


The proof of the proposition is the content of the next section.

\subsection{Proof of Proposition~\ref{prop_identification_limit_points}}
\label{sec_proof_identification_limit_points}

The central object in the proof of Proposition~\ref{prop_identification_limit_points} and
in the verification that any limit point is an energy solution is the prelimit martingale formulation from \eqref{eq_dynkin_tight} with
\begin{equation}\label{eq_dynkin_final}
	\begin{aligned}
		M_t^L(\varphi)
		%  & :=
		% u_{L}(t,\varphi)-u_{L}(0,\varphi)
		% -\int_0^tL^{2}\gens u_{L}(s,\varphi)\ud s
		% -\int_0^t(\partial_s+L^{3/2}\gena)
		% u_{L}(s,\varphi)\ud s \\
		 & :=
		u_{L}(t,\varphi)-u_{L}(0,\varphi)
		- \mD_{t}^{L}( \varphi)
		- \mZ_t^L( \varphi) \,.
	\end{aligned}
\end{equation}
% Besides establishing controlledness of any joint limit points
% (Lemma~\ref{lem_control}),
% the key steps are to identify the symmetric drift
% (Lemma~\ref{lem_blue_identification_symmetric_drift}) and the antisymmetric drift
% (Lemma~\ref{lem_blue_identification_nonlinear_drift}).
Throughout this section, we use without further mention the following consequence of
Section~\ref{sec_tight}: $(u_{L})_{L} $, $ (\mD^{L})_{L}$, $ (\mZ^{L})_{L}$, and
$ (M^{L})_{L}$ are tight in $ D ([0,T],\mS ' (\RR) )$ with limit points in
$ C ( [0,T], \mS ' ( \RR) )$.
\par\smallskip\smallskip

To establish energy solutions, we have to show controlledness and identify the antisymmetric drift. 
We summarise the output of these two key steps here, and then prove
Proposition~\ref{prop_identification_limit_points}.

\begin{lemma}[Controlledness of limit points]\label{lem_control}
	Any joint subsequential limit of $( u_{L}, \mZ^{L} )_{L \in \NN}$ in
	$D([0,T],\mS'(\RR) \times \mS ' (\RR) )$
	is a controlled process in the sense of
	Definition~\ref{def_controlled_process}.
\end{lemma}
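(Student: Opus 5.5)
We verify the four conditions of Definition~\ref{def_controlled_process} one at a time for a joint limit point $(u,\mZ)$ along a subsequence $L_k$. By Skorokhod's representation we may assume almost sure convergence in $D([0,T],\mS'(\RR)^2)$. Since the limits are continuous, this gives locally uniform convergence, and in particular convergence of the finite-dimensional marginals.

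\textbf{Stationarity.} Under $\nu_\rho$, $u_L(t,\cdot)$ is a sum of independent centred variables with variance $\chi(\rho)$, tested against $L^{-1/2}\varphi_{t,\cdot}$. The linear moving frame is only a spatial translation and does not change the law. The classical CLT (Lindeberg, using the moment bounds \eqref{eq_quant_mom_bound}) then shows that $u_L(t,\varphi)$ converges to a centred Gaussian with variance $\chi(\rho)\|\varphi\|_{L^2}^2$, jointly in finitely many test functions. Hence $u_t$ is white noise of variance $\chi(\rho)$. Note that the moving frame enters $u_L$ itself, so the limit field is the field observed in the moving frame.

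\textbf{Forward martingale.} By \eqref{eq_supp_sym}, $\mD^L_t(\varphi)=\frac{\alpha}{2}\int_0^t u_L(s,\Delta^L\varphi)\ud s$. Replacing $\Delta^L\varphi_s$ by $\varphi_s''$ costs $O(\chi(\rho)L^{-2})$ in $L^2$, because discretisation errors are $O(L^{-2})$ and the variables are independent. Therefore $\mD^L(\varphi)\to\frac{\alpha}{2}\int_0^t u_s(\partial_x^2\varphi)\ud s$, and consequently $M^L(\varphi)\to M(\varphi)$ as defined in the definition.

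To show $M(\varphi)$ is a martingale for the filtration of $(u,\mZ)$, we test against bounded continuous functionals $G$ of $(u,\mZ)$ at times $s_i\le s$. The prelimit identity $\mathbf E[(M_t^L-M_s^L)G]=0$ passes to the limit by uniform integrability. This uniform integrability comes from the $L^2$ bounds \eqref{eq_cond_tight_fixedfield}, \eqref{eq_tight_sym}, and Corollary~\ref{cor_asym_increment_estimate}, with sup-norms handled by Doob's inequality.

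The quadratic variation is identified as follows. By Lemma~\ref{lem_quant_qv}, $\langle M^L(\varphi)\rangle_t\to t\alpha\chi(\rho)\|\varphi'\|^2=t(\alpha\rho+\rho^2)\|\varphi'\|^2$ in $L^2$. To pass $(M^L)^2-\langle M^L\rangle$ to the limit we need uniform integrability of $(M^L_t)^2$. We obtain it from a uniform fourth-moment bound via the BDG inequality, together with \eqref{eq_vanish_jumps} and the $L^2$ control of $\langle M^L\rangle$. Continuity of $M$ follows from \eqref{eq_vanish_jumps}.

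\textbf{Zero quadratic variation.} Corollary~\ref{cor_asym_increment_estimate} gives $\mathbf E|\mZ_{s+\delta}-\mZ_s|^2\lesssim\delta^{3/2}$ uniformly in $L\ge L_0$. By Fatou's lemma the same bound holds for $\mZ$. Then
\[
\mathbf E\Big[\delta^{-1}\int_0^{T-\delta}|\mZ_{s+\delta}(\varphi)-\mZ_s(\varphi)|^2\ud s\Big]\lesssim T\delta^{1/2}\to0,
\]
which is the required convergence.

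\textbf{Time reversal.} This is the step requiring most care. The reversed stationary process $\eta(T-t)$ solves the martingale problem for $\gen_L^*=L^2\gens-L^{3/2}\gena$; this is the backward formulation assumed after \eqref{eq_dynkin_mart_timeinde}, extended to polynomial fields as in Section~\ref{sec_dynkin}. The moving frame has to be handled consistently. Under reversal, $\partial_s$ changes sign together with $\gena$, so the reversed antisymmetric drift is exactly $-(\mZ^L_T-\mZ^L_{T-t})$. The symmetric drift is unchanged. The backward martingale $\overleftarrow{M}^L$ then has the same bracket formula \eqref{eq_qv} with rates $c(y,x)$, and Lemma~\ref{lem_quant_qv} applies verbatim because $p_L$ and $q_L$ enter only at order $\alpha L^{-1/2}$. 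Passing to the limit exactly as in the forward case, with respect to the backward filtration generated by $u_{T-\cdot}$ and the increments of $\mZ$, completes the verification.

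The main obstacle is keeping the algebra of the reversed moving frame correct, and checking that the backward Dynkin extension holds for the unbounded fields.
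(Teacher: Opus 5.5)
Your proposal is correct and follows essentially the same route as the paper: a Lindeberg CLT for the one-time marginals, and identification of the symmetric drift. It then passes the martingale and compensated-square identities to the limit via uniform integrability (fourth moments from vanishing jumps and Lemma~\ref{lem_quant_qv}), uses Fatou with Corollary~\ref{cor_asym_increment_estimate} for the zero quadratic variation, and uses the adjoint martingale problem (with $\partial_s$ and $\gena$ both flipping sign) for time reversal. The only differences are cosmetic: you use Skorokhod representation, and you get the first uniform-integrability bound from the component $L^2$ estimates rather than from $\mathbf E[\langle M^L(\varphi)\rangle_t-\langle M^L(\varphi)\rangle_s]$.
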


Once we have established that the limit is a controlled process, it remains to identify $ \mZ$.

\begin{lemma}[Identification of the nonlinear drift]
	\label{lem_blue_identification_nonlinear_drift}
	Assume that $(u_{L_n}, \mZ^{L_{n}})\Rightarrow (u, \mZ )$
	%in $D([0,T],\mS'(\RR)\times \mS' (\RR ))$
	and that  $( u , \mZ )$ is a controlled process.
	Then, for every $t\in[0,T]$ and $\varphi\in\mS(\RR)$,
	\begin{equation*}
		\mZ_t(\varphi)=-\beta\mN_t(\varphi)
		\quad\text{almost surely}\,,
	\end{equation*}
	where
	%$\mZ^{L}$ is the antisymmetric drift \eqref{eq_def_mZ} and
	$\mN$ is the limiting box-kernel drift defined in \eqref{eq_energy_A}.
\end{lemma}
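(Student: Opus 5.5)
The plan is to compare $\mZ^{L}$ with the block drift $\mathcal{B}^{L,\ell}$ at a mesoscopic scale $\ell=\lfloor \varepsilon L\rfloor$. We then identify the limit of $\mathcal{B}^{L,\lfloor\varepsilon L\rfloor}$ as $-\beta\mN^{\varepsilon}$ in terms of the limit field $u$, and finally let $\varepsilon\to0$ using the energy estimate of \cite[Proposition~2.2]{GubPerk}. Fix $\varphi$ and $t$, and write $\ell=\lfloor\varepsilon L\rfloor$.

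\textbf{Step 1: reduction to the block drift.} By Lemma~\ref{lem_antisym_rewrite}, $\mZ_t^L(\varphi)=\mathcal{B}_t^{L,\ell}(\varphi)+\mathcal{R}_t^{L,\ell}(\varphi)+o_{L^2}(1)$. Proposition~\ref{prop_BG} and Lemma~\ref{lem_usefulestimates_phi}\,(i) give, exactly as in \eqref{eq_quant_blockremainder1},
\begin{equation*}
\mathbf E_{\rho,L}\big[|\mathcal{R}_t^{L,\ell}(\varphi)|^2\big]\lesssim_{\rho,\varphi,\alpha}\frac{t}{L}\Big(1+\ell+\frac{tL^2}{\ell^2}\Big)\lesssim t\varepsilon+\frac{t^2}{\varepsilon^2 L}+\frac{t}{L}.
\end{equation*}
Hence $\limsup_{L}\mathbf E[|\mZ_t^L(\varphi)-\mathcal{B}_t^{L,\lfloor\varepsilon L\rfloor}(\varphi)|^2]\lesssim \varepsilon t$, uniformly along the subsequence.

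\textbf{Step 2: block drift as a functional of the field.} Up to a shift of the box by one site and Riemann-sum errors, we have
\begin{equation*}
\overline{\eta}^{\ell}_{x}-\rho=\frac{L}{\ell\sqrt L}\sum_{y\in\mathbf B_{x,\ell}}(\eta_y-\rho)\approx L^{-1/2}\,u_L\big(s,i_{\varepsilon,x/L}(\cdot+L^{1/2}j'(\rho)s)\big).
\end{equation*}
Therefore
\begin{equation*}
\mathcal{B}_t^{L,\ell}(\varphi)=\beta\int_0^t\frac1L\sum_x\nabla_x^L\varphi_s\Big(u_L(s,i^{L}_{\varepsilon,x})^2-\tfrac{\chi(\rho)}{\varepsilon}\Big)\ud s+o(1).
\end{equation*}
The moving frame is harmless because it only translates the kernel together with $\varphi_s$. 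The constant $\chi(\rho)/\varepsilon$ drops out since $\sum_x\nabla^L_x\varphi_s=0$.

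The map $u\mapsto -\int_0^t\int u_s(i_{\varepsilon,x})^2\partial_x\varphi(x)\ud x\ud s$ is not continuous on $C([0,T],\mS'(\RR))$, because $i_{\varepsilon,x}$ is not Schwartz. We handle this by first mollifying the kernel, $i_{\varepsilon,x}\ast\kappa_\gamma$. The error from the mollification is controlled in $L^2$ uniformly in $L$ using stationarity and the product structure: $u_L(s,\cdot)$ has covariance $\chi(\rho)\|\cdot\|_{L^2}^2$ up to Riemann errors, and fourth moments are bounded via \eqref{eq_supp10_equivensembles}. The mollified functional is continuous. We also need uniform integrability of the squared fields, which follows from uniform fourth-moment bounds. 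Together these give $\mathcal{B}^{L_n,\lfloor\varepsilon L_n\rfloor}_t(\varphi)\Rightarrow-\beta\mN_t^{\varepsilon}(\varphi)$ jointly with $\mZ^{L_n}$, where the Riemann sum is replaced by the integral in \eqref{eq_energy_Adelta}.

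\textbf{Step 3: conclusion.} By Steps 1–2 and lower semicontinuity of the $L^2$ norm under weak convergence (via Fatou or Skorokhod coupling), $\mathbf E[|\mZ_t(\varphi)+\beta\mN^{\varepsilon}_t(\varphi)|^2\wedge1]\lesssim\varepsilon t$. Since $(u,\mZ)$ is controlled, $\mN^\varepsilon_t(\varphi)\to\mN_t(\varphi)$ in probability by \eqref{eq_energy_A}. Letting $\varepsilon\to0$ gives $\mZ_t(\varphi)=-\beta\mN_t(\varphi)$ almost surely.

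The main obstacle is Step 2. It requires passing a discontinuous quadratic functional of the field to the limit, while keeping the box discretisation, the transport shift and the Wick constant consistent, all uniformly in $L$. The quantitative choice of $\ell\sim\varepsilon L$ in Step 1 is what makes the remainder vanish and is dictated by the $tL^2/\ell^2$ term in Proposition~\ref{prop_BG}.
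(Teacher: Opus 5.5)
Your proposal is correct and follows the paper's proof essentially step by step. The steps are: the $L^2$ bound on $\mZ^L_t(\varphi)-\mathcal{B}^{L,\lfloor\varepsilon L\rfloor}_t(\varphi)$ (note $\mathcal{B}^{L,\ell}=-\beta\mN^{L,\ell}$) from Lemma~\ref{lem_antisym_rewrite} and Proposition~\ref{prop_BG}; the joint convergence of the block drift to $-\beta\mN^{\varepsilon}_t(\varphi)$ via box-kernel tests of $u_L$ and a mollified kernel, which is the paper's Lemma~\ref{lem_blue_block_to_box_mollifier}; lower semicontinuity under weak convergence; and finally $\varepsilon\to0$ via \eqref{eq_energy_A}. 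The only step you leave implicit is removing the mollification for the limit field itself, which the paper does using that each $u_r$ is a white noise of variance $\chi(\rho)$ (part of controlledness), giving an error of order $\chi(\rho)\varepsilon^{-1/2}\|i_{\varepsilon,0}-i_{\varepsilon,0}\ast\kappa_\gamma\|_{L^2(\RR)}$.
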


The proof of Lemma~\ref{lem_control} is the content of Section~\ref{sec_control} and the
proof of Lemma~\ref{lem_blue_identification_nonlinear_drift} is the content of
Section~\ref{sec_nonlinear}.
We now have all the necessary ingredients to state the
proof of  Proposition~\ref{prop_identification_limit_points}:

\begin{proof}[Proof of Proposition~\ref{prop_identification_limit_points}]
	Lemma~\ref{lem_control} establishes that any accumulation point is a controlled
	process.
	Hence the limiting box-kernel drift $ \mN$ in \eqref{eq_energy_A} is available by
	\cite[Proposition~2.2]{GubPerk}.
	On the other hand,
	Lemma~\ref{lem_blue_identification_nonlinear_drift} identifies the
	nonlinear drift on this same joint limiting law and gives, for every
	$t\in[0,T]$ and $\varphi\in\mS(\RR)$,
	\begin{equation*}
		\mZ_t(\varphi)=-\beta\mN_t(\varphi)
		\quad\text{almost surely}.
	\end{equation*}
	We conclude that the limit is a
	stationary energy solution of the stochastic Burgers equation
	\eqref{eq_burgers}, see Definition~\ref{def_energy_solution}.
\end{proof}

\subsubsection{Controlledness of limit points}\label{sec_control}

First, we
identify the limit of the term associated to the symmetric drift $ \mD_{t}^{L}( \varphi) $ from \eqref{eq_def_mD}.
\begin{lemma}[Identification of the symmetric drift]
	\label{lem_blue_identification_symmetric_drift}
	Let $\varphi\in\mS(\RR)$ and assume that $u_{L_n}\Rightarrow u$ in
	$D([0,T],\mS'(\RR))$. Then weakly in $C([0,T],\RR)$
	\begin{equation*}
		\big(
		\mathcal{D}_t^{L_{n}}(\varphi)
		\big)_{ t \in [0,T]}
		\Rightarrow
		\bigg(
		\frac{\alpha}{2}
		\int_0^t u_r(\partial_x^2\varphi)\ud r
		\bigg)_{ t \in [0,T] }\,.
	\end{equation*}
\end{lemma}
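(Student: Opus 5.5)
The plan is to use the exact identity \eqref{eq_supp_sym}, namely $\mD_t^{L}(\varphi)=\frac{\alpha}{2}\int_0^t u_L(r,\Delta^L\varphi)\ud r$, and compare it with a functional that is continuous on $D([0,T],\mS'(\RR))$. One subtlety: $u_L(r,\Delta^L\varphi)$ means the field tested against $x\mapsto\Delta^L_x\varphi_r$, which is a discrete Laplacian of the \emph{shifted} test function. Since $u_L(r,\cdot)$ already contains the shift $\varphi_r$, the continuum analogue is $u_L(r,\partial_x^2\varphi)$, defined with $\varphi$ replaced by $\partial_x^2\varphi$ in \eqref{eq_def_u}. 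I therefore split
\begin{equation*}
	\mD_t^{L}(\varphi)
	=
	\frac{\alpha}{2}\int_0^t u_L(r,\partial_x^2\varphi)\ud r
	+
	\frac{\alpha}{2}\int_0^t \frac{1}{\sqrt L}\sum_{x\in\ZZ}\big(\Delta_x^L\varphi_r-(\partial_x^2\varphi)_{r,x}\big)(\eta_x(r)-\rho)\ud r
	=: \mathrm{I}^L_t+\mathrm{II}^L_t .
\end{equation*}

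First I show that the error $\mathrm{II}^L$ vanishes uniformly in time in $L^2(\mathbf P_{\rho,L})$. By Cauchy--Schwarz in time, stationarity and the product structure of $\nu_\rho$,
\begin{equation*}
	\mathbf E_{\rho,L}\Big[\sup_{t\le T}|\mathrm{II}^L_t|^2\Big]
	\le \frac{\alpha^2}{4}\,T\,\chi(\rho)\int_0^T \frac1L\sum_{x\in\ZZ}\big|\Delta^L_x\varphi_r-\partial_x^2\varphi(L^{-1}x-L^{1/2}j'(\rho)r)\big|^2\ud r .
\end{equation*}
A Taylor expansion to fourth order gives a pointwise bound $L^{-2}\sup_{|y-L^{-1}x|\le L^{-1}}|\varphi_r^{(4)}(y)|/12$. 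The Riemann sum of the square is then $O(L^{-4})$ uniformly in $r$, because $\varphi_r$ is only a translate of $\varphi$. This is the same computation as in Lemma~\ref{lem_antisym_rewrite}, or Lemma~\ref{lem_usefulestimates_phi}. Hence $\mathrm{II}^L\to0$ in probability in $C([0,T],\RR)$.

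Next I identify $\mathrm{I}^L$ as a continuous functional of the field. Write $\psi:=\partial_x^2\varphi\in\mS(\RR)$. The main obstacle is that $u_L(r,\psi)$ is the field tested against the \emph{moving} function $\psi(\cdot-L^{1/2}j'(\rho)r)$. I will interpret this exactly as the paper does for $u_L$ itself: $u_L(r,\cdot)$ as an element of $\mS'(\RR)$ is $\psi\mapsto u_L(r,\psi)$ with the frame already built in. With that reading, $u_L(r,\psi)$ is literally the $\mS'$-valued process $u_L$ evaluated at $\psi$, so there is no extra moving test function to handle.

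The map $\omega\mapsto\big(\int_0^t\omega_r(\psi)\ud r\big)_{t\le T}$ from $D([0,T],\mS'(\RR))$ to $C([0,T],\RR)$ is continuous at every continuous path $\omega$. Indeed, Skorokhod convergence to a continuous limit is locally uniform convergence, and for fixed $\psi$ convergence in $\mS'$ uniformly in time gives uniform convergence of $\omega_r(\psi)$. Limit points of $u_L$ lie in $C([0,T],\mS'(\RR))$ by Section~\ref{sec_tight}. The continuous mapping theorem then gives $\mathrm{I}^{L_n}\Rightarrow\frac{\alpha}{2}\int_0^\cdot u_r(\partial_x^2\varphi)\ud r$.

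Finally, Slutsky's lemma combines this with $\mathrm{II}^{L_n}\to0$ in probability in $C([0,T],\RR)$, which yields the claim. If one prefers to avoid the continuous-mapping step for Skorokhod convergence, the uniform $L^2$ bound $\mathbf E|u_L(r,\psi)|^2\lesssim\chi(\rho)$ from \eqref{eq_cond_tight_fixedfield} gives uniform integrability. This would allow approximating the time integral by Riemann sums $\sum_k u_L(t_k,\psi)\Delta t$, which are finite-dimensional marginals at continuity points.
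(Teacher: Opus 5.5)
Your proposal is correct and follows essentially the same route as the paper. You use the same splitting via \eqref{eq_supp_sym}, and the same Cauchy--Schwarz, stationarity and product-structure bound on the discretisation error; your fourth-order Taylor expansion is just a quantitative form of Lemma~\ref{lem_usefulestimates_phi}\,(iv). You then apply the continuous mapping theorem to the main term, using that limit points lie in $C([0,T],\mS'(\RR))$, exactly as the paper does. Your explicit treatment of the moving frame and of the Slutsky step only spells out what the paper leaves implicit.
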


\begin{proof}
	By \eqref{eq_supp_sym}, we have
	%\begin{equation*}
	%	\int_0^t L^{2}\gens u_L(r,\varphi)\ud r
	%	=
	%	\frac{\alpha}{2}
	%	\int_0^t u_L(r,\Delta^L\varphi)\ud r .
	%\end{equation*}
	%Thus
	\begin{equation*}
		\mathcal{D}_t^L(\varphi)
		-
		\frac{\alpha}{2}
		\int_0^t u_L(r,\partial_x^2\varphi)\ud r
		=
		\frac{\alpha}{2}
		\int_0^t
		\frac{1}{ \sqrt{L}}
		\sum_{x\in\ZZ}
		\big(
		\Delta_x^L\varphi_r
		-
		\partial_x^2\varphi_r(L^{-1}x)
		\big)(\eta_x(r)-\rho)\ud r .
	\end{equation*}
	Now, using Cauchy--Schwarz in time, stationarity, and the product structure of
	$\nu_\rho$ yields
	\begin{equation*}
		\begin{aligned}
			 & \mathbf E_{\rho,L}
			\left[
				\sup_{t\leq T}
				\left|
				\int_0^t
				\frac{1}{ \sqrt{L}}
				\sum_{x\in\ZZ}
				\big(
				\Delta_x^L\varphi_r
				-
				\partial_x^2\varphi_r(L^{-1}x)
				\big)(\eta_x(r)-\rho)\ud r
				\right|^2
			\right]               \\
			 & \leqslant
			T\int_0^T
			\chi( \rho) \frac{1}{L} \sum_{x\in\ZZ}
			\big(
			\Delta_x^L\varphi_r
			-
			\partial_x^2\varphi_r(L^{-1}x)
			\big)^2
			\ud r \,.
		\end{aligned}
	\end{equation*}
	The right--hand side vanishes by Lemma~\ref{lem_usefulestimates_phi}\,(iv),
	thus, the replacement holds in $L^2 ( \mathbf P_{\rho,L})$. The remaining convergence
	follows by the continuous mapping theorem from $u_{L_n}\Rightarrow u$ and the continuity of the integral, since $
		u \in C ([0,T] , \mS ' ( \RR ))$.
\end{proof}

We are now ready to prove that any subsequential limit is a controlled process.

\begin{proof}[Proof of Lemma~\ref{lem_control}]
	Fix a subsequence
	$(L_n)_{n\geq1}\subset\NN$ with $L_n\to\infty$ along which $(u_{L_n},
		\mZ^{L_{n}})$ converges in law
	to $(u, \mZ )$ in $D([0,T],\mS'(\RR) \times \mS ' ( \RR ) )$. 
	The tightness estimates in the previous
	section allow us, along a further subsequence (which we do not relabel), to
	assume joint weak convergence of the triple $ ( u_{L_n},
		\mZ^{L_n} , M^{L_n} ) $ to $ ( u, \mZ , M)$.
	The limit has continuous trajectories.
	For brevity, we will write $ L$ instead of $
		L_n$.
	We verify each of the points in Definition~\ref{def_controlled_process}:
	\par\smallskip\smallskip

	\emph{Stationarity.} Fix $t\in[0,T]$ and $ \varphi \in\mS (\RR) $. Then under
	$ \nu_{\rho}$ the field $ u_{L}(t, \varphi; \eta)$ is the weighted sum of centred iid random
	variables with limiting second moment
	\begin{equation*}
		\begin{aligned}
			\EE_{\rho}[ | u_{L}(t, \varphi; \eta) |^{2}]
			=
			\frac{\chi ( \rho ) }{L} \sum_{x \in \ZZ}
			\varphi_{t,x}^{2}
			\to
			\chi( \rho) \| \varphi\|_{L^{2}( \RR)}^{2}\,.
		\end{aligned}
	\end{equation*}
	Next, we verify Lindeberg's condition.
	For every $\delta>0$, using \eqref{eq_quant_mom_bound}, we have
	\begin{equation}\label{eq_supp2_tightness_field}
		\begin{aligned}
			\frac{1}{L}
			\sum_{x\in\ZZ}
			\varphi_{t,x}^{2}
			\EE_{\rho}\big[( \eta_{x}- \rho)^2
				\mathbf 1_{\{ |\varphi_{t,x}| | \eta_{x}- \rho|>\delta \sqrt{ L}\}}\big]
			 & \leq
			\frac{\EE_{\rho}[| \eta_{0}- \rho|^4] }{ \delta^{2}  L^{2}}
			\sum_{x\in\ZZ}
			\varphi_{t,x}^{4}
			\lesssim_{\rho,\varphi,\delta}
			\frac{1}{ \alpha^{3}  L}
			\,,
		\end{aligned}
	\end{equation}
	which vanishes as $ L \to \infty$.
	Since $\nu_{\rho}$ is invariant, the argument applies at every time $t$. Hence,
	every one-time marginal
	of $u$ is spatial white noise with variance $\chi(\rho)$.
	\par\smallskip\smallskip

	\emph{Forward martingale problem.}
	Combining the joint convergence of $ ( u_{L},
		\mZ^{L}, M^L)$ with
	Lemma~\ref{lem_blue_identification_symmetric_drift} and \eqref{eq_dynkin_final} gives
	\begin{equation*}
		\begin{aligned}
			M_t(\varphi)
			:=
			u_t(\varphi)-u_0(\varphi)
			-\frac{\alpha}{2}\int_0^t u_s(\partial_x^2\varphi)\ud s
			-\mZ_t(\varphi).
		\end{aligned}
	\end{equation*}
	We see that $M(\varphi)$ is a martingale, because
	for $0\leq s<t\leq T$ we have
	\begin{equation}\label{eq_supp_new_control}
		\begin{aligned}
			\mathbf E
			\big[
				(M_t(\varphi)-M_s(\varphi))F
				\big]
			=
			\lim_{L \to \infty}
			\mathbf E_{\rho,L}
			\big[(M_t^L(\varphi)-M_s^L(\varphi))F_L\big]
			=0\,,
		\end{aligned}
	\end{equation}
	for every bounded continuous functional $F$ of
	finitely many marginals of $(u_r,\mZ_r)_{r\leq s}$.
	Here $ F_{L}$ is the analogous function using the same time marginals but of $ (
		u_{L}, \mZ^{L})$. This can then be extended to the whole natural filtration up to
	time $ s$ using the monotone class
	theorem, see e.g.
	\cite[Appendix~Corollary~4.4]{EthierKurtz86}.
	Indeed, we can pass the limit in \eqref{eq_supp_new_control} inside the expectation by weak convergence and uniform integrability, since
	by Lemma~\ref{lem_quant_qv}
	\begin{equation*}
		\begin{aligned}
			\sup_{L \in \NN }
			\mathbf E_{\rho,L}
			\big[
				|M_t^L(\varphi)-M_s^L(\varphi)|^2
				\big]
			=
			\sup_{L \in \NN }
			\mathbf E_{\rho,L}
			\big[
				\langle M^L(\varphi) \rangle_{t}- \langle M ^L(\varphi) \rangle_{s}
				\big]\lesssim_{\rho, \alpha, \varphi, T}1\,.
		\end{aligned}
	\end{equation*}

	\emph{Quadratic variation of $M(\varphi)$.}
	Let $ \varphi \in \mS ( \RR ) $. Then Lemma~\ref{lem_quant_qv} gives
	\begin{equation*}
		\lim_{ L \to \infty} \langle M^{L}( \varphi) \rangle_{t} = t \, (\alpha\rho+\rho^2) \| \partial_{x} \varphi \|_{L^{ 2}( \RR ) }^{2}\,,
	\end{equation*}
	uniformly over $[0,T]$.
	Thus, we only need to show that indeed  $ \langle M ( \varphi) \rangle_{t}= 	t
		(\alpha\rho+\rho^2) \| \partial_{x} \varphi \|_{L^{2}( \RR)}^{2}$.
	To this end, we verify that $ M_{t} ( \varphi)^{2} - 	t
		(\alpha\rho+\rho^2) \| \partial_{x} \varphi \|_{L^{2}( \RR)}^{2}$ is a
	martingale, which follows the same lines as the martingale identification in the previous point.
	For the necessary uniform integrability, we use the fourth--moment bound
	\cite[Lemma~VII.3.34]{JS}:
	\begin{equation}\label{eq_fourth_mom_mart}
		\begin{aligned}
			 & \mathbf E_{\rho,L}\big[ \sup_{t \in [0, T]} | M_{t}^{L}( \varphi)
			|^{4}\big]\\
			&\lesssim
			\sup_{t \in [0,T]}
			\big\| M_{t}^{L}( \varphi ) - M_{t -}^{L} (
			\varphi) \big\|^{2}_{L^{\infty}( \mathbf P_{\rho,L})}
			\mathbf E_{\rho,L}\big[ \langle M^{L}( \varphi)
				\rangle_{T}^{2} \big]^{1/2}
			+
			\mathbf E_{\rho,L}\big[ \langle M^{L}( \varphi)
			\rangle_{T}^{2} \big]                                                                                 \\
			 & \lesssim_{ \rho,\varphi} L^{-3} \Big( T^{2}+	\frac{T}{ \alpha^{4} L^{3}} + \frac{T^{ 2}}{\alpha L}
			+ T^{2} o_{L}(1)	\Big)^{1/2}+
			\Big(T^{2}+
			\frac{T}{ \alpha^{4} L^{3}} + \frac{T^{ 2}}{\alpha L}
			+ T^{2} o_{L}(1)
			\Big)
			\,,
		\end{aligned}
	\end{equation}
	where we used \eqref{eq_vanish_jumps} and Lemma~\ref{lem_quant_qv} (and
	once more that $ \alpha \chi ( \rho) \lesssim_{\rho} 1$).

	Hence, for fixed $ \alpha$ this yields
	$ \sup_{L \in \NN} \mathbf E_{\rho,L}\big[ \sup_{t \in [0, T]} | M_{t}^{L}( \varphi)
		|^{4}\big]\lesssim_{\rho, \varphi, \alpha} T+  T^{2}$.
	\par\smallskip\smallskip

	\emph{Vanishing quadratic variation of $ \mZ$.}
	Corollary~\ref{cor_asym_increment_estimate}, Fatou's lemma, and the
	convergence of $\mZ^L$ to $\mZ$ imply that
	for $0\leq s<t\leq T$,
	\begin{equation*}
		\EE\big[|\mZ_t(\varphi)-\mZ_s(\varphi)|^{2}\big]
		\lesssim_{\rho,\varphi,T}
		\beta^2\alpha^{-4}
		(t-s)^{3/2}.
	\end{equation*}
	Therefore, for $0<\delta<T$, the Russo--Vallois regularised
	quadratic variation in \eqref{eq_energy_zero_qv} satisfies
	\begin{equation*}
		\EE\bigg[
		\frac{1}{\delta}\int_0^{T-\delta}
		|\mZ_{s+\delta}(\varphi)-\mZ_s(\varphi)|^{2}
		\ud s
		\bigg]
		\lesssim_{\rho,\varphi}
		\beta^2\alpha^{-4}
		\frac{T}{\delta}\int_0^{T-\delta}\delta^{3/2}\ud s
		\lesssim_{\rho,\varphi,T}
		\beta^2 \alpha^{-4}\delta^{1/2}\,,
	\end{equation*}
	which vanishes when $ \delta \to 0 $.
	\par\smallskip\smallskip

	\emph{Time reversal.}
	Fix $T>0$ and let $\overleftarrow{\eta}^{L,T}$ denote the càdlàg reversal
	of $\eta^{L}$ on $[0,T]$, with the left-limit and endpoint conventions of
	Proposition~\ref{prop_ip_exists}. By that proposition,
	$\overleftarrow{\eta}^{L,T}$ solves the martingale problem associated to
	the adjoint generator $\gen_{L}^{\ast}$ \eqref{eq_adjointness} with respect to its reversed
	natural filtration.
	We then define
	the reversed field
	$
		\overleftarrow{u}_{L}(t,\varphi)
		:=
		u_L(T-t,\varphi;\overleftarrow{\eta}^{L,T}(t)).
	$
	Dynkin's formula applied to the reversed process yields a martingale after subtracting
	the drift
	\begin{equation*}
		\begin{aligned}
			\int_0^t
			(\partial_s+L^{2}\gens-L^{3/2}\gena)
			\overleftarrow{u}_{L}(s,\varphi)\ud s
			=
			\int_{T-t}^T
			(-\partial_r+L^{2}\gens-L^{3/2}\gena)
			u_L(r,\varphi)\ud r
			\,.
		\end{aligned}
	\end{equation*}
	The symmetric term is unchanged, and the same replacement estimate as in
	Lemma~\ref{lem_blue_identification_symmetric_drift} holds.
	On the other hand, the antisymmetric drift satisfies
	\begin{equation*}
		\int_{T-t}^{T}
		(-\partial_r-L^{3/2}\gena)
		u_L(r,\varphi)\ud r
		=
		-\big(
		\mZ_T^L(\varphi)-\mZ_{T-t}^L(\varphi)
		\big)\,,
	\end{equation*}
	which converges to
	$-(\mZ_T(\varphi)-\mZ_{T-t}(\varphi))$.
	Lastly,
	the martingale
	properties pass to the limit by the same argument as for the forward
	martingale, now tested
	against bounded continuous functionals $F$ of the backward filtration.
	% The reversed quadratic variation is unchanged, since the adjoint
	% dynamics only exchanges $p_L$ and $q_L$.
	\par\smallskip\smallskip

	\noindent Overall, any subsequential limit $ (u, \mZ )$ is a controlled
	process in the sense of Definition~\ref{def_controlled_process}.
\end{proof}

\subsubsection{Identification of the nonlinear drift}\label{sec_nonlinear}

It is only left to establish Lemma~\ref{lem_blue_identification_nonlinear_drift}.
To this end, we first define the microscopic block approximation
\begin{equation}\label{eq_def_N_L_delta}
	\begin{aligned}
		\mN_{t}^{L,\ell}(\varphi)
		:=
		-
		\int_0^t
		\sum_{x\in\ZZ}
		\nabla_x^L\varphi_r\,
		\tau_x\mQ_\rho(\ell,\eta(r))\ud r \quad \forall \ell \geqslant 2\,.
	\end{aligned}
\end{equation}

\begin{lemma}\label{lem_blue_block_to_box_mollifier}
	Assume that
$		(u_{L_n},\mZ^{L_n})\Rightarrow(u,\mZ)$
		in
		$D([0,T],\mS'(\RR)\times\mS'(\RR))$,
	and that $(u,\mZ)$ is a controlled process.
	Let $\delta>0$ and set
	$\ell_n=\lfloor\delta L_n\rfloor$.
	%, with $i_{\delta,x}$ and $\mN^\delta$ as in \eqref{eq_energy_Adelta}.
	Then, for every $\varphi\in\mS(\RR)$ and $t \in [0,T]$
	\begin{equation*}
		\big(
		u_{L_n},\mZ^{L_n},
		\mN_t^{L_n,\ell_n}(\varphi)
		\big)
		\Rightarrow
		\big(
		u,\mZ,\mN_t^\delta(\varphi)
		\big)\,,
	\end{equation*}
	where $ \mN_{t}^{\delta}$ was defined in \eqref{eq_energy_Adelta}.
\end{lemma}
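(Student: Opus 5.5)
The strategy is to express $\mN_t^{L,\ell}(\varphi)$, up to an $L^2(\mathbf P_{\rho,L})$-small error, as a continuous functional of the field $u_L$, and then use the continuous mapping theorem. With $\ell=\ell_n=\lfloor\delta L\rfloor$, the block average can be read off the fluctuation field:
\begin{equation*}
	\tau_x\big(\overline\eta_0^{\ell}-\rho\big)
	=\frac1\ell\sum_{y=x}^{x+\ell-1}(\eta_y(r)-\rho)
	=\frac{\sqrt{L}}{\ell}\,\frac1{\sqrt L}\sum_{y}\mathds 1_{[x,x+\ell)}(y)\,(\eta_y-\rho)
	\approx \frac{1}{\sqrt L}\,u_L\big(r,i_{\delta,L^{-1}x+\text{shift}}\big)\,,
\end{equation*}
where the shift is the moving frame $L^{1/2}j'(\rho)r$. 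Here we use $\ell/L\to\delta$, so $\frac{L}{\ell}\approx\delta^{-1}$, which is exactly the normalisation of $i_{\delta,\cdot}$. Hence
\begin{equation*}
	\sum_x\nabla^L_x\varphi_r\,\tau_x\big(\overline\eta^\ell_0-\rho\big)^2
	\approx\frac1L\sum_x\partial_x\varphi_r(L^{-1}x)\,u_L(r,i_{\delta,\cdot})^2\,.
\end{equation*}
This is a Riemann sum for $\int_\RR u_L(r,i_{\delta,x})^2\,\partial_x\varphi(x)\,\ud x$ after the change of variables $x\mapsto x-L^{1/2}j'(\rho)r$. That change of variables is consistent because $u_L(r,\cdot)$ is itself defined with the same translated test functions, so the frame shifts cancel.

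\textbf{Step 1.} Show that the Wick constant contributes nothing. The term $\chi(\rho)/\ell$ in $\mQ_\rho$ multiplies $\sum_x\nabla^L_x\varphi_r$, which is zero by telescoping. Consequently $\mN^{L,\ell}_t(\varphi)=-\int_0^t\sum_x\nabla^L_x\varphi_r\,\tau_x(\overline\eta^\ell_0-\rho)^2\,\ud r$ exactly.

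\textbf{Step 2.} Control, in $L^1(\mathbf P_{\rho,L})$ and uniformly in $r$, three discrepancies. The first is replacing $\nabla^L_x\varphi_r$ by $\partial_x\varphi_r(L^{-1}x)$, which costs $O(L^{-1})$ per site. The second is replacing $\ell^{-1}$ by $(\delta L)^{-1}$ together with the mismatch between the box indicator and $i_{\delta,L^{-1}x}$ at the box boundary; since $\ell$ and $\delta L$ differ by at most $1$, this is a relative $O(1/(\delta L))$ error. The third is replacing the discrete sum over $x$ by the integral $\int\ud x$; the field $x\mapsto u_L(r,i_{\delta,x})$ is piecewise constant on cells of length $L^{-1}$, so this replacement is exact up to boundary terms. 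Each error is bounded by stationarity together with the moment bounds $\EE_\rho[(\overline\eta^\ell_0-\rho)^2]=\chi/\ell$ and \eqref{eq_supp10_equivensembles}, and each is $o(1)$ for fixed $\delta$ and $\alpha$.

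\textbf{Step 3.} Pass to the limit. The map
\begin{equation*}
	\omega\mapsto-\int_0^t\int_\RR\omega_r(i_{\delta,x})^2\,\partial_x\varphi(x)\,\ud x\,\ud r
\end{equation*}
is not continuous on $D([0,T],\mS'(\RR))$, for two reasons: $i_{\delta,x}\notin\mS(\RR)$, and the $x$-integral runs over an unbounded domain. I will fix this by mollification. Replace $i_{\delta,x}$ by $\iota^\varepsilon_{\delta,x}:=i_{\delta,x}*\varrho_\varepsilon\in\mS(\RR)$, and truncate the $x$-integral to $|x|\le R$; the rapid decay of $\partial_x\varphi$ makes the tail in $R$ negligible. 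The truncated, mollified functional is continuous on $C([0,T],\mS'(\RR))$, the space on which the limit lives. It therefore converges jointly with $(u_{L_n},\mZ^{L_n})$.

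The errors from mollification and truncation must be controlled uniformly in $L$. This works because $\EE_\rho\big[|u_L(r,\iota^\varepsilon_{\delta,x}-i_{\delta,x})|^2\big]\lesssim\chi(\rho)\|\iota^\varepsilon_{\delta,x}-i_{\delta,x}\|_{L^2}^2+o_L(1)$, which tends to $0$ as $\varepsilon\to0$. On the limit side the analogous bound holds because $u_r$ is white noise with variance $\chi(\rho)$, by the stationarity part of Lemma~\ref{lem_control}. Cauchy--Schwarz then converts these $L^2$ bounds into $L^1$ bounds on the squares. Concluding with a standard approximation argument (convergence in law of approximations that are uniformly close in probability) gives the joint convergence in law to $(u,\mZ,\mN^\delta_t(\varphi))$.

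I expect Step 3 to be the main obstacle. The non-smooth box kernel and the quadratic functional require uniform $L^2$ control on both the prelimit and the limit side, and the approximation must be organised so that it is compatible with joint convergence.
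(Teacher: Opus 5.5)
Your proposal is correct and follows essentially the same route as the paper. The Wick constant drops by telescoping, and the block average is rewritten as $u_L$ tested against the box kernel in the moving frame. The Riemann sum is then replaced by the $x$-integral, and the kernel is mollified with errors controlled uniformly in $L$ (on the limit side via the white-noise marginals). Joint convergence then follows from the continuous mapping theorem plus a standard approximation argument.

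There are minor differences. You control errors in $L^1$ via $\mathbf E|a^2-b^2|\leqslant\|a-b\|_{2}\,\|a+b\|_{2}$, so second moments suffice where the paper uses the fourth-moment bound \eqref{eq_u_phi_estimate} and hypercontractivity. You also track the $\lfloor\delta L\rfloor$ versus $\delta L$ mismatch explicitly, and you add a spatial truncation that is harmless though not strictly needed.
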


The proof is based on the fact that $\mQ_\rho(\ell_n,\eta)$ can be written as the fluctuation field tested against
a box kernel of width $\delta$, shifted by the moving frame.
This allows us to reduce the weak convergence in the statement to that of $
	u_{L_{n}}( i_{\delta})^{2}$.
Because $i_{\delta}$ is not smooth, the proof requires several tedious approximations which are not particularly insightful. We therefore defer it until after the proof of
Lemma~\ref{lem_blue_identification_nonlinear_drift},
where we identify the nonlinear drift
term.

\begin{proof}[Proof of Lemma~\ref{lem_blue_identification_nonlinear_drift}]
	First, we apply Lemma~\ref{lem_antisym_rewrite} and Proposition~\ref{prop_BG} to derive
	\begin{equation}\label{eq_nonlinear_supp2}
		\begin{aligned}
			\mathbf E_{\rho,L}
			\left[
				\left|
				\mZ^{L}_{t}( \varphi)
				+
				\beta
				\mN_{t}^{L,\lfloor\delta L\rfloor}(\varphi)
				\right|^2
				\right]
			\lesssim_{\rho , \varphi}
			\frac{t^{2}\beta^{2}}{ \alpha L} +
			\frac{t \beta^{2}}{ \alpha^{4}}
			\Big(
			\frac{\lfloor\delta L\rfloor }{L}
			+ \frac{t L }{\lfloor\delta L\rfloor^{2}}
			\Big) \,,
		\end{aligned}
	\end{equation}
	where we used $ \| \nabla_{x}^{L} \varphi\|_{\ell^{2}}^{2}\lesssim L $ (see 
	Lemma~\ref{lem_usefulestimates_phi}~(i)).
	% Notice that the right--hand side
	% vanishes if we  first take the limit $L \to \infty$ and then $ \delta \to 0 $.
	Thus, by the Portmanteau theorem and~\eqref{eq_nonlinear_supp2}
	\begin{equation*}
		\begin{aligned}
			\mathbf E\left[
				\left|
				\mZ_t(\varphi)+\beta\mN_t^\delta(\varphi)
				\right|^2
			\right]
			&\leqslant
			\liminf_{n\to\infty}
			\mathbf E_{\rho,L_n}\left[
				\left|
				\mZ_t^{L_n}(\varphi)
				+\beta\mN_t^{L_n,\lfloor\delta L_n\rfloor}(\varphi)
				\right|^2
			\right]
			\lesssim_{\rho,\alpha,\beta,\varphi,t}\delta \,,
		\end{aligned}
	\end{equation*}
	for every fixed $ \delta >0$.
	Here we used $\big(
		\mZ^{L_n},
		\mN_t^{L_n,\lfloor\delta L_n\rfloor}(\varphi)
		\big)
		\Rightarrow
		\big(
		\mZ,\mN_t^\delta(\varphi)
		\big)$ from  
	Lemma~\ref{lem_blue_block_to_box_mollifier}.

	Together with the fact that $\mN_t^\delta(\varphi)\to\mN_t(\varphi)$ in probability as
	$\delta\downarrow0$ (see \eqref{eq_energy_A}), we conclude  that
		$\mZ_t(\varphi)=-\beta\mN_t(\varphi)$
	almost surely
	for every fixed $t$ and $\varphi$.
\end{proof}

\begin{proof}[Proof of Lemma~\ref{lem_blue_block_to_box_mollifier}]
	In the following, we suppress the $n$-dependence by writing $ L=L_{n}$ and also abuse notation by writing $ \delta L $ for the integer part $ \lfloor \delta L
		\rfloor $.
	Furthermore, $\mN^{L,\delta L }$ can be represented alternatively by 
	\begin{equation}\label{eq_blockboxN_equiv}
		\begin{aligned}
			\mN_{t}^{L,\delta L}(\varphi)
			=
			-
			\int_0^t
			\frac{1}{L} \sum_{x\in\ZZ}
			u_{L}
			(r,i_{ \delta, d_{x,r}})^2
			\nabla_{L d_{x,r}}^{L} \varphi \ud r \,,
		\end{aligned}
	\end{equation}
	with $ d_{x,r}:=  L^{-1}x -\sqrt{L} j'(\rho)r $.
	To see this, we first notice 
	\begin{equation*}
		\begin{aligned}
			u_{L}(r,i_{ \delta,  d_{x,r}})
			=
			\sqrt{L}
			\Big(
			\frac{1}{ \delta L }\sum_{z=x}^{x+ \delta L -1} (\eta_z(r)-\rho)
			\Big)
			= 
			\sqrt{L}( \overline{\eta}_{x}^{\delta L }- \rho) \,,
		\end{aligned}
	\end{equation*}
	where $ d_{x,r} $ is precisely undoing the moving time frame, and therefore
	\begin{equation}\label{eq_supp1_nonlinear}
		\begin{aligned}
			\tau_x\mQ_\rho(\delta L ,\eta(r))
			=
			\frac{1}{L}
			\Big(
			u_{L}(r,i_{\delta,  d_{x,r}})^2
			-
			\frac{\chi( \rho)}{\delta}
			\Big)\,.
		\end{aligned}
	\end{equation}
	Now, recalling \eqref{eq_def_N_L_delta} we can disregard the constant term on the right--hand side of \eqref{eq_supp1_nonlinear} since the sum over discrete gradients vanishes.
	This implies \eqref{eq_blockboxN_equiv} once we notice that
	\begin{equation*}
		\begin{aligned}
			\nabla_x^{L}\varphi_r
			=
			L \big(
			\varphi_{r}(
			L^{-1} (x+1)) - \varphi_{r} ( L^{-1}x )\big)
			= L \big(
			\varphi ( d_{x, r} + L^{-1})
			-
			\varphi ( d_{x, r} )
			\big)
			= \nabla_{ L d_{x,r}}^{L} \varphi\,.
		\end{aligned}
	\end{equation*}
	We proceed by introducing several auxiliary terms and control each of the differences
	separately:

	\textbf{Step 1.} \emph{Continuous approximation.} We start by replacing the Riemann sum with the corresponding continuous integral.
	Namely, we prove that
	\begin{equation}\label{eq_replace_continuum_deriv}
		\begin{aligned}
			 & %\lim_{L \to \infty} 
			 \mathbf E_{\rho,L}\bigg[
			\bigg|
			\int_0^t\bigg(
			\frac{1}{L} \sum_{x\in\ZZ}
			u_{L}
			(r,i_{ \delta, d_{x,r}})^2
			\nabla_{L d_{x,r}}^{L} \varphi
			-
			%\frac{1}{L}
			\int_{\RR}
			u_{L}(r, i_{\delta, x} )^{2}
			\partial_{x} \varphi (x)
			\ud x\bigg)
			\ud r
			\bigg|^2
			\bigg]
			\lesssim_{\rho, \varphi}  \frac{t^{2}}{
				\delta^{3}
				\alpha^{3}L} \,.
		\end{aligned}
	\end{equation}
	It suffices to control the integrand.
	To this end, we establish some useful estimates on $ u_{L}(r, i_{\delta,z})^{2} $:

	\begin{enumerate}
		\item
		      The moment estimates \eqref{eq_quant_mom_bound}
		      imply
		      \begin{equation}\label{eq_u_phi_estimate}
			      \begin{aligned}
				      \EE_\rho\big[
					      u_L(r, \varphi; \eta)^{4}
					      \big]
				       & \lesssim_{\rho}
				      \frac{1}{\alpha^{3} L^{2}} \sum_{x \in \ZZ}|
				      \varphi_{r,x}|^{4}
				      +
				      \Big(
				      \frac{ 1}{ \alpha L} \sum_{x \in \ZZ}
				      | \varphi_{r,x}|^{2}
				      \Big)^{2} \lesssim_{\rho} \alpha^{-3}
				      L^{-2}\|
				      \varphi_{r, \cdot}\|^{4}_{\ell^{2}}\,.
			      \end{aligned}
		      \end{equation}
		      In particular, for the box kernel
		      \begin{equation}\label{eq_u_fourth_moment}
			      \begin{aligned}
				      \sup_{z \in \RR}
				      \EE_\rho[u_L(r,i_{\delta,z}; \eta)^4]
				      \lesssim_{\rho} \alpha^{-3}
				      L^{-2}\|i_{\delta,z} (\cdot/L)\|^{4}_{\ell^{2}}
				      \lesssim_{\rho} \alpha^{-3} \delta^{-2}\,,
			      \end{aligned}
		      \end{equation}
			  for all large enough $L$, 
		      where we used $ \|i_{\delta,z} (\cdot/L)\|^{2}_{\ell^{2}}\lesssim \delta^{-1} L$.

		\item The same fourth-moment estimate as in 1. gives, for
		      $y\in[x,x+1)$,
		      \begin{equation}\label{eq_diff_u_L}
			      \begin{aligned}
				      \left\|
				      u_L(r,i_{\delta,d_{x,r}})^2
				      -u_L(r,i_{\delta,d_{y,r}})^2
				      \right\|_{L^2(\nu_\rho)}
				      \lesssim_{\rho}
				      \frac{1}{ \delta^{3/2} \alpha^{3/2} \sqrt{L} }\,.
			      \end{aligned}
		      \end{equation}
		      Here we first used \eqref{eq_u_fourth_moment} together with
		      \begin{equation}\label{eq_sq_diff_trick}
			      \begin{aligned}
				       & \left\|
				      u_L(r, \varphi)^2
				      -u_L(r, \psi)^2
				      \right\|_{L^2(\nu_\rho)} \\
				       & \leqslant
				      \big(
				      \|
				      u_L(r, \varphi)\|_{L^{4} ( \nu_{\rho})}
				      +
				      \|
				      u_L(r, \psi)
				      \|_{L^4(\nu_\rho)}
				      \big)
				      \left\|
				      u_L(r, \varphi- \psi)
				      \right\|_{L^4(\nu_\rho)}\,,
			      \end{aligned}
		      \end{equation}
		      and then, using \eqref{eq_u_phi_estimate},  that for $y\in[x,x+1)$
		      \begin{equation}\label{eq_l2_diff_u}
			      \begin{aligned}
				      \| u_L(r,i_{\delta,d_{x,r}}-i_{\delta,d_{y,r}})
				      \|_{L^{4}( \nu_{\rho} )}
				      \lesssim_{\rho}
				      \frac{1}{ \alpha^{3/4} \sqrt{L}}
				      \|i_{\delta,d_{x,r}} ( \cdot / L ) -
				      i_{\delta,d_{y,r}}( \cdot/L) \|_{\ell^{2}}
				      \lesssim_{\rho}
				      \frac{1}{ \delta \alpha^{3/4} \sqrt{L}}
				      \,,
			      \end{aligned}
		      \end{equation}
		     since  the two sampled indicators differ at no more than two lattice points.
	\end{enumerate}

	Next, we perform three approximation steps on the integrand.
	First, we replace the discrete gradient by showing that
	\begin{equation*}
		\begin{aligned}
			A_{L}^{(1)} (r, \varphi) & :=
			\Big\|
			\frac{1}{L} \sum_{x\in\ZZ}
			u_{L}
			(r,i_{ \delta, d_{x,r}}; \eta)^2
			\big(
			\nabla_{L d_{x,r}}^{L} \varphi
			-
			\partial_{x} \varphi ( d_{x, r})
			\big)
			\Big\|_{L^{2}( \nu_{\rho} )}               \\
			                         & \lesssim_{\rho}
			\frac{1}{ \delta \alpha^{3/2}}
			\frac{1}{L} \sum_{x\in\ZZ}
			\big| 	\nabla_{L d_{x,r}}^{L} \varphi
			-
			\partial_{x} \varphi ( d_{x, r}) \big|
			\lesssim_{\rho, \varphi} \frac{1}{ \delta \alpha^{3/2} L} \,,
		\end{aligned}
	\end{equation*}
	where we first used \eqref{eq_u_fourth_moment} and then
	\eqref{eq_grad_approx} with the fact that $ \varphi \in \mS ( \RR) $.
	Likewise, we bound
	\begin{equation*}
		\begin{aligned}
			A_{L}^{(2)} (r, \varphi) & :=	\bigg\|
			\frac{1}{L} \sum_{x\in\ZZ}
			u_{L}
			(r,i_{ \delta, d_{x,r}}; \eta)^2
			\int_{x}^{x+1}
			\big(
			\partial_{x} \varphi ( d_{x, r})
			-
			\partial_{x} \varphi (d_{y,r})\big)
			\ud y
			\bigg\|_{L^{2}( \nu_{\rho})}          \\
			                         &
			\lesssim_{\rho} \frac{1}{ \delta \alpha^{3/2}}
			\frac{1}{L} \sum_{x \in \ZZ}
			\sup_{y \in [x, x+1]} \big|
			\partial_{x} \varphi ( d_{x, r})
			-
			\partial_{x} \varphi (d_{y,r})\big|
			\lesssim_{\rho, \varphi}
			\frac{1}{ \delta \alpha^{3/2}L}\,,
		\end{aligned}
	\end{equation*}
	 where the last inequality holds by the mean-value theorem because $ \varphi \in \mS ( \RR ) $ and  Lemma~\ref{lem_usefulestimates_phi}.
	Lastly, using \eqref{eq_diff_u_L}
	\begin{equation*}
		\begin{aligned}
			A_{L}^{(3)} ( r,\varphi) & :=
			\bigg\|
			\frac{1}{L} \sum_{x\in\ZZ}
			\int_{x}^{x+1}
			\big(
			u_{L}(r,i_{ \delta, d_{x,r}}; \eta)^2
			-
			u_{L}(r, i_{\delta, d_{y,r}}; \eta )^{2}\big)
			\partial_{x} \varphi (d_{y,r})
			\ud y
			\bigg\|_{L^{2}( \nu_{\rho})}               \\
			                         & \leqslant
			\frac{1}{L} \sum_{x\in\ZZ}
			\int_{x}^{x+1}
			\big\|
			u_{L}(r,i_{ \delta, d_{x,r}}; \eta)^2
			-
			u_{L}(r, i_{\delta, d_{y,r}}; \eta )^{2}\big\|_{L^{ 2}( \nu_{\rho})}
			|\partial_{x} \varphi (d_{y,r})|
			\ud y                                      \\
			                         & \lesssim_{\rho}
			\frac{1}{ \delta^{3/2} \alpha^{3/2}}
			\frac{1}{L^{3/2}}
			\sum_{x\in\ZZ}
			\sup_{y \in [x, x+1]}
			|\partial_{x} \varphi (d_{y,r})|
			\lesssim_{\rho, \varphi} \frac{1}{ \delta^{3/2}
				\alpha^{3/2} \sqrt{L} }
			\,.
		\end{aligned}
	\end{equation*}
	Combining the above estimates for $ A^{(i)}_{L}$, $i=1,2,3$, 
	jointly with the Cauchy--Schwarz inequality (for
	the time integral), yields \eqref{eq_replace_continuum_deriv} after performing the change of
	variable $ d_{y,r} \mapsto x$ which cancels the
	factor $ L^{-1}$ in front of the integrals above.
	% \begin{equation*}
	% 	\begin{aligned}
	% 		 & \mathbf E_{\rho,L}\bigg[
	% 		\bigg|
	% 		\int_0^t\bigg(
	% 		\frac{1}{L} \sum_{x\in\ZZ}
	% 		u_{L}
	% 		(r,i_{ \delta, d_{x,r}})^2
	% 		\nabla_{L d_{x,r}}^{L} \varphi
	% 		-
	% 		\frac{1}{L}
	% 		\int_{\RR}
	% 		u_{L}(r, i_{\delta, d_{y,r}} )^{2}
	% 		\partial_{x} \varphi (d_{y,r})
	% 		\ud y\bigg)
	% 		\ud r
	% 		\bigg|^2
	% 		\bigg]                      \\
	% 		 & \lesssim
	% 		t
	% 		\int_0^t
	% 		\sum_{i=1}^{3} A_{L}^{(i)}( r, \varphi)^{2} \ud r
	% 		\lesssim_{\rho, \varphi} t^{2} \frac{1}{
	% 			\delta^{3}
	% 			\alpha^{3}L} \,.
	% 	\end{aligned}
	% \end{equation*}
	% This establishes \eqref{eq_replace_continuum_deriv} after performing a change of
	% variables: replace $ d_{y,r}$ with $ y$ which will cancel the
	% factor $ L^{-1}$ in front of the second integral.
	\par\smallskip\smallskip

	\textbf{Step 2.} \emph{Mollification and weak convergence.}
	Next, we would like to use $ u_{L_{n}} \Rightarrow u $ to
	conclude weak convergence of the integral, however, $i_{\delta , y}$ is not a
	Schwartz function.
	Instead, we consider compactly supported smooth
	approximations $i_{\delta,0}^{\kappa}$ of $i_{\delta,0}$ through mollification with a compactly supported function at scale $ \kappa $ (extended to $
		i_{\delta, x}^{\kappa}$ by shifting), such that
	\begin{equation*}
		\begin{aligned}
			\lim_{ \kappa \to 0}   \| i_{\delta, 0}- i_{\delta, 0}^{\kappa}\|
			_{L^{2}( \RR )}
			=0\,.
		\end{aligned}
	\end{equation*}
	Again, we introduce several auxiliary terms to proceed:
	First, we show that we can replace $i_{\delta, x}$ by its mollified
	version, because
	\begin{equation}\label{eq_supp_A4}
		\begin{aligned}
			A_{L, \kappa }^{(4)} ( r, \varphi) & :=
			\bigg\|
			\int_{\RR}
			\big(
			u_{L}(r, i_{\delta, x} )^{2}
			-
			u_{L}(r, i_{\delta, x}^{\kappa} )^{2}
			\big)
			\partial_{x} \varphi (x)
			\ud x
			\bigg\|_{L^{2}( \nu_{\rho})}                            \\
			                                   & \lesssim_{\varphi}
			\sup_{x \in \RR} \big\|u_{L}(r, i_{\delta, x} )^{2}
			-
			u_{L}(r, i_{\delta, x}^{\kappa} )^{2}
			\big\|_{L^{2}( \nu_{\rho})}
			\lesssim_{ \rho,\varphi}
			\frac{1}{ \delta^{1/2} \alpha^{3/2}  }
			\Big(
			\|i_{\delta,0} -
			i_{\delta,0}^{\kappa} \|_{L^{2}( \RR ) }
			+
			\frac{1}{\delta\sqrt{L}} \Big)\,.
		\end{aligned}
	\end{equation}
	In the last step we used the analogue of \eqref{eq_diff_u_L}
	with \eqref{eq_l2_diff_u} replaced by
	\begin{equation*}
		\begin{aligned}
			\| u_L(r,i_{\delta,d_{x,r}}-i_{\delta,d_{x,r}}^{\kappa})
			\|_{L^{4}( \nu_{\rho} )}
			 & \lesssim_{\rho}
			\frac{1}{ \alpha^{3/4} \sqrt{L}}
			\|(i_{\delta,d_{x,r}} -
			i_{\delta,d_{x,r}}^{\kappa}) ( \cdot/L) \|_{\ell^{2}} \\
			% & \lesssim_{\rho}
			%\frac{1}{ \alpha^{3/4}}
			%\bigg(
			%\frac{1}{L}
			%\sum_{y \in \ZZ}
			%| ( i_{\delta,d_{x,r}} -
			%i_{\delta,d_{x,r}}^{\kappa}) ( y/L)|^{2}
			%\bigg)^{1/2}
			%\\
			 & \lesssim_{\rho }
			\frac{1}{ \alpha^{3/4} }\Big(
			\|i_{\delta,0} -
			i_{\delta,0}^{\kappa} \|_{L^{2}( \RR ) }
			+
			\frac{1}{\delta\sqrt{L}} \Big)
			\,,
		\end{aligned}
	\end{equation*}
	where we performed another Riemann sum approximation in the last
	inequality.

	%using with $h = i_{\delta,d_{x,r}} -
	%				i_{\delta,d_{x,r}}^{\kappa}$
	%		\begin{equation*}
	%			\begin{aligned}
	%				\frac{1}{L}
	%					\sum_{y \in \ZZ}
	%					\int_{y}^{y+1}
	%					| h( y/L)|^{2}
	%					\ud z
	%					\lesssim
	%					\int_{\RR} |h (y)|^{2} \ud y
	%					+
	%					\frac{1}{L}
	%					\sum_{y \in \ZZ}
	%					\int_{y}^{y+1}
	%					| h( y/L) -h(z/L)|^{2}
	%					\ud z
	%					 \leqslant \| h \|_{L^{2}( \RR)}^{2}
	%					+
	%					\frac{1}{L}
	%					\delta^{-2}
	%			\end{aligned}
	%		\end{equation*}
	%		where the last inequality is a consequence of a total variation bound.

	Finally, the continuous mapping theorem applies such that $(u_{L},\mZ^{L}) \Rightarrow (u, \mZ)$ yields
	\begin{equation}\label{eq_conv_u_Ln_joint}
		\begin{aligned}
			\left(
			u_{L},\mZ^{L},
			\int_0^t\int_{\RR}
			u_{L}(r,i_{\delta,x}^{\kappa})^2
			\partial_x\varphi(x)\ud x\ud r
			\right)
			\Rightarrow
			\left(
			u,\mZ,
			\int_0^t\int_{\RR}
			u_r(i_{\delta,x}^{\kappa})^2
			\partial_x\varphi(x)\ud x\ud r
			\right)\,,
		\end{aligned}
	\end{equation}
	as $L \to \infty$, for every $ \kappa>0$.
	Lastly, we have to undo the mollification for the limiting field $ u$
	\begin{equation}\label{eq_supp_A5}
		\begin{aligned}
			A_{ \kappa }^{(5)} ( r, \varphi) & :=
			\bigg\|
			\int_{\RR}
			\Big(
				u_r(i_{\delta,x}^{\kappa})^2
			-
				u_r(i_{\delta,x})^2
			\Big)
			\partial_x\varphi(x)\ud x
			\bigg\|_{L^{2}( \PP)}                                     \\
			                                   & \lesssim_{ \varphi}
			\sup_{x \in \RR} \big\|u_r( i_{\delta,
					x}^{\kappa} )^{2}
			-
			u_r( i_{\delta, x} )^{2}
			\big\|_{L^{2}( \PP)}                                      \\
			                                   &
			\lesssim_{\varphi} \chi( \rho)
			\delta^{-1/2} \| i_{\delta, 0}^{\kappa} - i_{\delta , 0} \|_{L^{2}( \RR)}
			\lesssim_{\rho, \varphi} \alpha^{-1} \delta^{-1/2} \| i_{\delta, 0}^{\kappa} - i_{\delta , 0} \|_{L^{2}( \RR)}
			\,,
		\end{aligned}
	\end{equation}
	where we used again the trick \eqref{eq_sq_diff_trick} and the fact that $ u_r$ has the
	law of a spatial white noise with variance $ \chi ( \rho)$ (which allows us to use
	hypercontractivity).
	\par\smallskip\smallskip

	\textbf{Step 3.} \emph{Summary of approximations.}
	To conclude the statement of the lemma, we recall the identity
	\eqref{eq_blockboxN_equiv}.
	All approximation steps above are shown in a $ L^{2}$-sense:
	First, we replace the Riemann sum with
	the integral \eqref{eq_replace_continuum_deriv} and then combine \eqref{eq_supp_A4} and \eqref{eq_supp_A5}.
	Hence, \eqref{eq_conv_u_Ln_joint} implies that $\mN_{t}^{L,\delta L }(\varphi)$ converges to
	$\mN_{t}^{\delta}(\varphi)$ when first taking the limit $ L \to \infty$ and then
	$ \kappa \to 0 $.
\end{proof}

\section{The weakly condensing regime}
\label{sec_cond}

Having considered the inclusion process with fixed diffusivity $ \alpha>0$, we now turn to the weak condensation regime. 
To this end, we fix $ \rho>0$ and let $ \alpha_{L} \to 0 $ with $ \alpha_{L} L \to \infty$, such that 
\begin{equation*}
	\chi_L(\rho):=\rho+\frac{\rho^2}{\alpha_L}\to\infty\,.
\end{equation*}
In the symmetric setting, we prove Gaussian fluctuations throughout this
regime (Theorem~\ref{thm_main_cond}), which is the maximal regime in which
such a result is expected to hold. In Section~\ref{sec_asym_cond}, we also
discuss weak asymmetry and propose a candidate limiting SPDE.

\subsection{Symmetric weakly condensing fluctuations}\label{sec_sym}

In this section we prove Theorem~\ref{thm_main_cond}. Namely, we prove convergence of the density fluctuation field
for the symmetric
inclusion process in the weakly condensing regime
to the
Ornstein-Uhlenbeck SPDE
\begin{equation}\label{eq_ou_spde}
	\begin{aligned}
		\partial_{t}v = \frac{1}{2} \partial_{x}^{2} v + \partial_{x} \xi\,.
	\end{aligned}
\end{equation}
The proof follows the martingale-problem strategy of the equilibrium
fluctuation theory for reversible conservative systems, see for example
\cite[Chapter~11]{KipnisLandim99}.
Because the compressibility diverges, the classical reversible argument does
not apply verbatim. Instead each estimate must be revisited with explicit control of
its dependence on $\alpha_L$.

To avoid repeating estimates already proved for the fixed-$\alpha$ field, we
will repeatedly compare $v_L$ with the density fluctuation field from the
non-condensing regime.  In the symmetric case $\beta=0$, the only changes are
the time acceleration and the variance normalisation:
\begin{equation*}
	v_L(t,\varphi)
	=
	\frac{1}{\sqrt{\chi_L(\rho)}}
	u_L(t/\alpha_L,\varphi)\big|_{\beta=0}.
\end{equation*}
Consequently, several tightness and martingale estimates from the previous
sections can be reused, provided we
have suitable control of divergences that are caused by $ \lim_{L \to \infty}\alpha_{L}=0$.

\begin{proof}[Proof of Theorem~\ref{thm_main_cond}]
	We prove that $ ( v_{L})_{L}$ is tight and that any limit point $v$ is the solution of
	the martingale problem associated to \eqref{eq_ou_spde}.
	First, for every $L$ by Dynkin's formula
	\eqref{eq_dynkin_mart}
	\begin{equation*}
		v_L(t,\varphi)
		=
		v_L(0,\varphi)
		+
		\int_0^{t / \alpha_{L}} L^{2}\mS  v_L( \alpha_{L} s,\varphi)\,\ud s
		+
		\widetilde M_t^L(\varphi),
	\end{equation*}
	where $\widetilde M^L(\varphi)$ is a martingale.
	As for the fluctuation field, we have
	\begin{equation}\label{eq_tildeM_id}
		\begin{aligned}
			\widetilde M_t^L(\varphi) =
			\frac{1}{ \sqrt{ \chi_{L} ( \rho)}} M_{t/ \alpha_{L}}^{L} ( \varphi)
			\Big|_{ \beta =0} \,.
		\end{aligned}
	\end{equation}

	\textbf{Step 1.} \emph{Stationary white noise law.} We establish that $ v_{L} (t, \cdot)$ converges in law to a unit white noise for every $ t \geqslant 0$, provided $\alpha_{L} L \to \infty$.
	Indeed, we can follow the same steps as in the non--condensing regime.
	Tightness holds by \eqref{eq_cond_tight_fixedfield}, where the extra normalisation $
		\sqrt{\chi_{L} ( \rho)}$ in $
		v_{L}$ precisely
	compensates the variance blow up of the density field $ u_{L}$.
	Moreover,
	$ \EE_{\rho}[| \eta_{0}- \rho|^4] \lesssim \alpha_{L}^{-3}$.
	Hence, the estimate in
	\eqref{eq_supp2_tightness_field} with the replacement $ \delta
		\mapsto \delta \sqrt
		{\chi_{L}} $ and the extra prefactor $
		\chi_{L}^{-1}\sim \alpha_{L} $ yields
	\begin{equation*}
		\begin{aligned}
			\frac{1}{ \chi_{L} L}
			\sum_{x\in\ZZ}
			\varphi_{t,x}^{2}
			\EE_{\rho}\big[( \eta_{x}- \rho)^2
				\mathbf 1_{\{ |\varphi_{t,x}|| \eta_{x}- \rho|> \delta \sqrt{ \chi_{L}\,  L}\}}\big]
			\lesssim_{\rho,\varphi,\delta}
			\frac{1}{ \alpha_{L}  L}\,.
		\end{aligned}
	\end{equation*}

	\textbf{Step 2.} \emph{Symmetric drift.}
	By \eqref{eq_supp_sym}, we have $ L^{2}\mS v_{L}( t, \varphi) = \frac{\alpha_{L}}{2} v_{L}( t,
		\Delta^{L} \varphi)$.
	We define 
	\begin{equation*}
	\begin{aligned}
	\widetilde{\mathcal D}_t^L(\varphi)
			:=
			\int_{0}^{ t/ \alpha_{L}} \frac{\alpha_{L}}{2} v_{L}(
			\alpha_{L} s,
			\Delta^{L} \varphi) \ud s
			=\chi_L^{-1/2}\mathcal D_{t/\alpha_L}^L(\varphi)\,,
	\end{aligned}
	\end{equation*}
	where $ \mD^{L}$ was introduced in \eqref{eq_def_mD}.	
	Then, following the proof of Lemma~\ref{lem_blue_identification_symmetric_drift}, we see that
	\begin{equation*}
		\begin{aligned}
			\big(\widetilde{\mathcal D}_t^L(\varphi)\big)_{t \in [0,T]}
			\Rightarrow \bigg(\frac{1}{2}
			\int_0^t v_r(\partial_x^2\varphi)\ud r\bigg)_{t \in [0,T]}
			\,,
		\end{aligned}
	\end{equation*}
	by taking into account the extra variance
	normalisation $ \chi_{L}$.
	Indeed, 
	\eqref{eq_tight_sym} yields
	\begin{equation*}
		\mathbf E_{\rho,L}
		\big[
			|\widetilde{\mathcal D}_t^L(\varphi)
			-\widetilde{\mathcal D}_s^L(\varphi)|^2
			\big]
		\lesssim_{\rho,\varphi}
		\chi_L^{-1}\alpha_L
		\left(\frac{t-s}{\alpha_L}\right)^2
		\lesssim_{\rho,\varphi}
		(t-s)^2\,,
	\end{equation*}
	such that the tightness estimates of Section~\ref{sec_tight}
	remain uniform in the rescaled setting.

	\textbf{Step 3.} \emph{Martingale identification.}
	First, tightness of $ (\widetilde M^{L} ( \varphi) )_{L}$ follows again from tightness of
	the quadratic variation \cite[Theorem VI.4.13]{JS}:
	\eqref{eq_tildeM_id} together with
	Lemma~\ref{lem_quant_qv} implies
	\begin{equation}\label{eq_qv_quant_cond}
		\begin{aligned}
			\mathbf E_{\rho,L}
			\Big[
			\sup_{t\leq T}
			\Big|
			\langle \widetilde M^L(\varphi)\rangle_t
			-
			t\,  \| \partial_{x} \varphi\|_{L^{2} ( \RR )}^{2}
			\Big|^2
			\Big]
			 & \lesssim_{\rho, \varphi}
			\chi_{L}^{-2}
			\Big(
			\frac{T}{ \alpha^{5}_{L} L^{3}} + \frac{T^{ 2}}{\alpha^{3}_{L} L}
			+
			\frac{T^{2}}{\alpha_{L}^{2} } o_{L}(1)\Big) \\
			 & \lesssim_{\rho, \varphi}
			\frac{T}{ \alpha_{L}^{3}L^{3}} + \frac{T^{2}}{\alpha_{L} L} + T^{2} o_{L}(1)\,,
		\end{aligned}
	\end{equation}
	where in the last step we used again that $ \chi_{L} \sim \alpha^{-1}_{L}$. Notice also that \eqref{eq_vanish_jumps} still yields vanishing jumps,
	hence, any limit point of $ (\widetilde M^{L} ( \varphi) )_{L}$ has continuous
	trajectories.

	It is only left to show that $ \widetilde M ( \varphi) $ is indeed a martingale and has
	quadratic variation $ \langle \widetilde M ( \varphi) \rangle_{t} = t \|
		\partial_{x} \varphi\|_{L^{2} ( \RR )}^{2}$. To this end we follow the same steps as in
	the proof of Lemma~\ref{lem_control}: First, we notice that
	\begin{equation*}
		\begin{aligned}
			\mathbf E
			\big[
				(\widetilde M_t(\varphi)-\widetilde M_s(\varphi))F
				\big]
			=
			\lim_{L \to \infty}
			\mathbf E_{\rho,L}
			\big[(\widetilde M_t^L(\varphi)-\widetilde M_s^L(\varphi))F_L\big]
			=0\,,
		\end{aligned}
	\end{equation*}
	for any bounded continuous functional $ F$ of finitely many marginals of $(v_r, \widetilde
		M_{r})_{r \leqslant s}$, since by  \eqref{eq_fourth_mom_mart} and \eqref{eq_tildeM_id}
	\begin{equation*}
		\begin{aligned}
			&\mathbf E_{\rho,L}
			\big[ \sup_{t \in [0, T]}
				| \widetilde M_t^L(\varphi)|^4
				\big]\\
			&\lesssim \chi_{L}^{-1}
			L^{-3} \Big( T^{2}+	\frac{T}{ \alpha_{L}^{3} L^{3}} + \frac{T^{ 2}}{\alpha_{L} L}
			+ T^{2} o_{L}(1)	\Big)^{1/2}+
			\Big(T^{2}+
			\frac{T}{ \alpha^{3}_{L} L^{3}} + \frac{T^{ 2}}{\alpha_{L} L}
			+ T^{2} o_{L}(1)
			\Big)
			\lesssim_{\rho, \varphi, T} 1
			\,.
		\end{aligned}
	\end{equation*}
	The same argument yields that $ \widetilde M^{2}_{t}( \varphi) - t \| \partial
		_{x} \varphi\|_{L^{2}( \RR )}^{2}$ is a martingale, 
		which identifies the quadratic variation.
		\par\smallskip\smallskip

	\textbf{Step 4.} \emph{Identification of the limit.}
	Combining Steps~1-3 above establishes tightness of  $ ( v_{L} ( \cdot, \varphi), \widetilde M^{L}(
		\varphi) )_{L \in \NN }$ in $ D
		([0,T] , \RR \times \RR ) $ with limit points in $ C( [0,T] , \RR \times \RR) $.
	Thus, by Mitoma's theorem, the fields $ (v_{L} )_{L}$ are tight in $ D ([0,T] , \mS ' (
		\RR))$. Moreover, we established that any weak limit point $ v$ satisfies
	\begin{equation*}
		\begin{aligned}
			v_t(\varphi) = v_0(\varphi) + \frac{1}{2} \int_{0}^{t} v_r(
			\partial_{x}^{2} \varphi) \ud r+ \widetilde M_{t}( \varphi) \,, \quad \forall
			\varphi \in \mS ( \RR) \,,
		\end{aligned}
	\end{equation*}
	where $ \widetilde M ( \varphi) $ is a martingale.
	This uniquely identifies $ v$ as the generalized Ornstein--Uhlenbeck process
	solving \eqref{eq_ou_spde}, see for example
	\cite[Section~11.2]{KipnisLandim99}.
			\par\smallskip\smallskip

	Lastly, the proof remains true in the setting $ \lim_{L \to \infty} \alpha_{L} >0$, in which case the necessary bounds of Sections~\ref{sec_tight} and~\ref{sec_energy_solutions} hold verbatim up to an $ \alpha$- and $ \rho$-dependent constant.
\end{proof}

\begin{remark}\label{rem_cond_improvement}
	The above proof
	uses the one-block replacement estimate in Lemma~\ref{lem_quant_qv}
	which underlies the second-order Boltzmann--Gibbs principle. This yields the improved
	error \eqref{eq_qv_quant_cond},
	which vanishes in the full weakly condensing regime $\alpha_LL\to\infty$.
	On the other hand, applying a  na\"ive estimate to  the bracket representation
	\eqref{eq_prelimit_bracket} using  stationarity and the Cauchy--Schwarz inequality is not
	sufficient to cover the full regime. Indeed, using the
	bound $\var_\rho(a_x^L)\lesssim_\rho\alpha_L^{-2}$ and then applying the rescaling
	\eqref{eq_tildeM_id}, would only give an error bound
	\begin{equation*}
	\begin{aligned}
	\mathbf E_{\rho,L}\left[
		\sup_{t\leq T}
		\left|
		\langle\widetilde M^L(\varphi)\rangle_t
		-
		t \,\| \partial_{x} \varphi\|_{L^{2} ( \RR )}^{2}
		%\mathbf E_{\rho,L}
		%[\langle\widetilde M^L(\varphi)\rangle_t]
		\right|^2
		\right]
		\lesssim_{\rho,\varphi,T}
		\chi_L^{-2}
		\left(\frac{T}{\alpha_L}\right)^2
		\frac{\alpha_L^{-2}}{L}
		+o_L(1)
		\sim_\rho
		\frac{1}{\alpha_L^2L}
		+o_L(1)\,,
	\end{aligned}
	\end{equation*}
	where $o_L(1)$ is a deterministic Riemann-sum error in the expected bracket.
	Hence, this direct estimate would prove convergence of the bracket
	only under the stronger condition $\alpha_L^2L\to\infty$.
\end{remark}

\subsection{Discussion of a weakly asymmetric regime}\label{sec_asym_cond}

In view of the quantitative estimates derived throughout the non--condensing
regime, it is natural to ask whether weakly asymmetric fluctuations can also be
derived in the weakly condensing regime.  
% While the scaling points to a
% non-trivial Burgers-type limit, our bounds from the fixed-$\alpha$
% case are not strong enough after the variance normalisation and time acceleration.
To investigate this, we define
\begin{equation*}
	\begin{aligned}
		w_{L}( t, \varphi):=
		\frac{1}{ \sqrt{\chi_{L} L }}
		\sum_{x \in \ZZ} \varphi \big( L^{-1}x -
		L^{1/2}
		j' ( \rho) t / \alpha_{L}   \big) \big(
		\eta_{x}(t/ \alpha_{L}) - \rho\big)
		=
		\chi_{L}^{-1/2} u_{L}( t/ \alpha_{L} , \varphi) \,,
	\end{aligned}
\end{equation*}
which now takes both the shifted time frame and the variance blowup of \eqref{eq_def_u} and
\eqref{eq_cond_density_field} into account. Formally, we expect $ w_{L}$ to approximately solve the effective equation
\begin{equation*}
	\begin{aligned}
		\partial_{t} w_{L} = \frac{1}{ 2}  \partial_{x}^{2} w_{L} - \frac{\beta \sqrt{
				\chi_{L}} }{\alpha_{L}  } \partial_{x} ( w_{L}^{2}) + \partial_{x} \xi\,.
	\end{aligned}
\end{equation*}
Thus, in order to see a non-vanishing nonlinearity, it is necessary to choose a faster
vanishing asymmetry, namely $ \beta=
	\beta_{L} = \hat{\beta} \alpha_{L}^{3/2}$, for some $ \hat{\beta} >0$. This yields (at least formally) the following candidate SPDE for the scaling limit $w$:
\begin{equation*}
	\begin{aligned}
		\partial_{t} w = \frac{1}{2} \partial_{x}^{2} w - \rho \hat{\beta} \,
		\partial_{x} (w^{2}) + \partial_{x} \xi\,.
	\end{aligned}
\end{equation*}
Notice that the $L$-dependent choice $ \beta_{L}$ will also affect the transport shift
in the definition of $ w$ since it depends on $ j ' ( \rho)$.

However, our fixed-$\alpha$ bounds from the asymmetric case do not extend uniformly to $ \alpha_{L} \to 0$. After normalisation and time acceleration, the block and remainder estimates impose incompatible requirements on the block scale. Hence, using the present bounds, we are unable to establish tightness or identify the nonlinear drift in the weakly condensing regime. We leave the asymmetric limit in the weakly condensing regime for future work.

\appendix
\addtocontents{toc}{\protect\setcounter{tocdepth}{1}}

\section{Infinite-volume asymmetric simple inclusion process}\label{app_asip}

As discussed in the introduction, a general infinite-volume well-posedness statement for the
asymmetric inclusion process is delicate.  The formal generator is
\begin{equation*}
	\begin{aligned}
		\gen f(\eta)
		 & :=
		\sum_{x,y\in\ZZ}
		c(y-x)\eta_x(\alpha+\eta_y)
		\bigl[f(\eta^{x,y})-f(\eta)\bigr],
	\end{aligned}
\end{equation*}
where $c$ is the asymmetric nearest-neighbour kernel, $c(1)=p$,
$c(-1)=q$, $p+q=1$, and $c(z)=0$ otherwise.
The associated carr\'e du champ operator is $\Gamma f:=\gen(f^2)-2f\,\gen f $.

%Our equilibrium fluctuation arguments are carried out for the canonical
%infinite-volume stationary process with initial law $\nu_\rho$ obtained
%as the limit of stationary periodic finite-volume systems. 
Below we construct a candidate infinite-volume process by periodic approximation.
This construction
 provides both the necessary forward $\gen$-martingale problem and the adjoint
martingale problem for its time reversal. While we do not assert uniqueness
among all infinite-volume martingale solutions or pathwise well-posedness
from arbitrary deterministic initial configurations, this canonical process
is sufficient for the scope of the paper.

\subsection{Existence by periodic approximations}

We approximate the infinite volume inclusion dynamics by periodic dynamics on discrete
tori $\TT_K:= \{-K , \cdots, K-1,K \} \subset \ZZ$. To this end, denote by $\nu_\rho^{\TT_K}$
the product measure on $ \NN_{0}^{\TT_K}$
and define
\begin{equation*}
	\begin{aligned}
		\gen^{(K)} f ( \eta)
		:=
		\sum_{x \in \TT_{K}, y \in \ZZ}
		c (y-x) \eta_{x}( \alpha + \eta_{y})
		\big[ f ( \eta^{x,y}) - f ( \eta) \big]\,,
	\end{aligned}
\end{equation*}
where $\eta_{y}$ in the expression is to be read as its projection onto $ \TT_{K}$ (while in $ c(y-x)$ its meaning is indeed $x+1$ or $x-1$ without projecting).
The total number of particles is $\nu_{\rho}^{\TT_K}$-almost surely finite and is conserved
by the dynamics. We denote the resulting continuous--time Markov chain on $\TT_K$ by
$(\eta^{(K)}(t))_{t\geqslant 0}$ and its law by $\mathbf P_{\rho}^{(K)}$. The measure
$\nu_{\rho}^{\TT_K}$ is invariant under $\gen^{(K)}$, see \cite[Theorem~2.1]{GRV11}, which
is why we use periodic boundary conditions for the asymmetric dynamics. By periodic
extension, we regard $\eta^{(K)}$ as a process with values in $\NN_0^{\ZZ}$.

\begin{proposition}[Unique periodic approximation]\label{prop_ip_exists}
	Let $ \rho>0 $.
	There exists a probability measure $ \mathbf P_{\rho}$ such that
	\begin{equation*}
		\begin{aligned}
			\mathbf P_{\rho}^{(K)} \Rightarrow
			\mathbf P_{\rho}\,, \quad \text{in } \
			D ([ 0, \infty) ; \NN_{0}^{\ZZ})\,.
		\end{aligned}
	\end{equation*}
	The coordinate process $ ( \eta (t))_{t \geqslant 0}$ associated to the limiting measure $
		\mathbf P_{\rho}$ is stationary with respect to $ \nu_{\rho}$, and
	for every local and bounded
	function $ f : \NN_{0}^{\ZZ} \to \RR $ the process
	\begin{equation*}
		\begin{aligned}
			t \mapsto
			f ( \eta (t)) - f ( \eta (0)) - \int_{0}^{t} \gen f ( \eta(s)) \ud
			s 
		\end{aligned}
	\end{equation*}
	is a martingale with predictable quadratic variation $ \int_{0}^{t}
		\Gamma f ( \eta (s))\ud s$.
	Moreover, for every $T>0$, the càdlàg time reversal of the limiting process
	on $[0,T]$ solves the martingale problem associated to $\gen^{\ast}$ on
	bounded local functions.
	% , with predictable quadratic variation determined
	% by $\Gamma^{\ast}f:=\gen^{\ast}(f^{2})-2f\gen^{\ast}f$.
\end{proposition}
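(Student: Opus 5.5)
The construction of $\mathbf P_{\rho}$ proceeds in three steps: tightness of the stationary periodic laws $\mathbf P_{\rho}^{(K)}$ in $D([0,\infty);\NN_0^{\ZZ})$, passage to the limit in the martingale problem, and identification of a unique limit point together with its time reversal.

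\emph{Tightness.} Since the product topology is used, it suffices to show tightness of each coordinate process $(\eta^{(K)}_x(t))_{t\geq0}$ in $D([0,\infty);\NN_0)$ for every fixed $x$. By stationarity each marginal $\eta^{(K)}_x(t)$ has law $\mathrm{NB}(\alpha,\alpha/(\alpha+\rho))$, uniformly in $K$. The jump rate at $x$ is bounded by $C(\eta_x+\eta_{x\pm1})(\alpha+\eta_x+\eta_{x\pm1})$, whose expectation under $\nu_\rho$ is finite and independent of $K$. Hence the expected number of jumps of $\eta_x$ in $[0,T]$ is at most $CT$, uniformly in $K$. This controls the Aldous modulus of continuity: the probability of two jumps within time $\delta$, or of any jump in a window of length $\delta$ near a stopping time, is bounded through the stationary rate by $C\delta$. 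Aldous' criterion therefore gives tightness, and limit points are stationary with marginal $\nu_\rho$, since $\nu_\rho^{\TT_K}$ restricted to any finite window equals $\nu_\rho$ for large $K$.

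\emph{Martingale problem.} For a bounded local $f$ depending on sites in a finite set $\Lambda$, we have $\gen^{(K)}f=\gen f$ once $K$ exceeds the range of $\Lambda$. The function $\gen f$ is local but unbounded, although it has polynomial growth of degree $2$. The identity
\[
E^{(K)}\big[(f(\eta_t)-f(\eta_s)-\int_s^t\gen f)\,G\big]=0,
\]
for $G$ a bounded continuous functional of the path up to time $s$, passes to the limit after truncating $\gen f$ at level $R$. The truncation error is controlled uniformly in $K$ by stationarity and the uniform moment bound $\EE_\rho[(\gen f)^2]<\infty$. Continuity of $\eta\mapsto\int_s^t \gen f(\eta(r))\ud r$ at limit paths holds because fixed times are almost surely not jump times of the limit, which follows from stationarity and the jump-rate bound. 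The quadratic variation statement follows in the same way by applying this argument to $f^2$, which gives the predictable bracket $\int\Gamma f$. For the time reversal, each stationary finite chain reversed on $[0,T]$ is a Markov chain with generator $(\gen^{(K)})^*$ with respect to $\nu_\rho^{\TT_K}$, which is again a periodic inclusion generator with $p,q$ exchanged. Reversal is continuous on $D$ at paths without a jump at $T$, so the same argument applies to the reversed process.

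\emph{Uniqueness of the limit.} This is the main obstacle. Tightness provides only subsequential limits, and infinite-volume uniqueness of the martingale problem is not available. To obtain a single limit $\mathbf P_\rho$, I would couple $\eta^{(K)}$ and $\eta^{(K')}$ on a common probability space with identical initial data on $\TT_K$ and common Poisson clocks. I would then show that the discrepancy at a fixed site $x$ up to time $T$ arises only from information propagating from distance $K$, with the probability of this event tending to zero uniformly in $K'>K$. The first approach I would try is a finite-speed-of-propagation estimate: a discrepancy path must traverse $K$ edges, and each traversal occurs at rate controlled by local occupation numbers. Under the stationary product measure these rates have all moments, so a chain-counting bound of the form $\sum_n (CT)^n/n!\cdot(\text{moment factors})$ should decay in $K$. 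The difficulty is that the rates are quadratic and could therefore accumulate along such a chain. If this estimate fails, I would settle for subsequential limits, which suffices for the applications in the paper, as remarked after \eqref{eq_dynkin_mart_timeinde}.
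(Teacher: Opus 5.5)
\textbf{The gap: uniqueness of the limit point.} Your tightness and martingale-problem steps are essentially sound. One small repair: at a stopping time $\tau$ the configuration is not $\nu_\rho$-distributed, so $\mathbf E\int_\tau^{\tau+\delta}R_x(\eta(s))\ud s$ is not literally $\leqslant C\delta$. Instead bound it by $A\delta+(T+1)\,\EE_\rho[R_x\mathbf 1_{\{R_x>A\}}]$ using stationarity, and let $\delta\to0$, then $A\to\infty$.

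The genuine gap is your last step, which is the actual content of the proposition. It asserts convergence of the \emph{whole} sequence $\mathbf P_\rho^{(K)}$ to a single $\mathbf P_\rho$, so you must show that all limit points coincide. You leave this as a tentative plan, concede that the chain-counting estimate may fail, and fall back on subsequential limits. That fallback proves a strictly weaker statement. It would suffice for the fluctuation results, as noted after \eqref{eq_dynkin_mart_timeinde}, but not for Proposition~\ref{prop_ip_exists}.

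Your worry is also justified. A direct H\"older bound on the product of rates along an $n$-step discrepancy path needs $n$-th moments of the quadratic rates. Under the negative binomial law these grow like $(n!)^2$ up to exponential factors, which overwhelms the $T^n/n!$ gain.

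\textbf{The missing idea: localise, then couple.} Rather than counting chains under the stationary law, the paper localises using two ingredients.

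First, it proves a supremum-in-time moment bound, $\sup_{K\geqslant R}\mathbf E_\rho^{(K)}[\sup_{t\leqslant T}\eta_x^{(K)}(t)^r]<\infty$ for every $r$ (Lemma~\ref{lem_sup_moment}). This comes from $\sup_{t\leqslant T}\eta_x\leqslant\eta_x(0)+N_x(T)$, where $N_x$ counts jumps touching $x$, together with a Dynkin--H\"older--Gr\"onwall bound on $\mathbf E[N_x(t)^r]$.

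Second, it introduces the stopping time $\tau_R$ at which some site of $[-R,R]$ first carries $R^{1/4}$ particles. Before $\tau_R$ all rates in $[-R,R]$ are at most $\overline{c}_R\lesssim\sqrt R$. So in a Harris coupling with shared edge clocks of rate $\overline{c}_R$ and shared acceptance uniforms, on $\{\tau_R>T\}$ a discrepancy created at $\pm R$ reaches $\Lambda$ only after $d_R\sim R$ successive rings. This has probability at most $2\,\mathbf P(\mathrm{Poisson}(2\overline{c}_RT)\geqslant d_R)\to0$. Meanwhile, a union bound over the $2R+1$ sites and Markov's inequality with $r>4$ give $\mathbf P(\tau_R\leqslant T)\lesssim R^{1-r/4}\to0$. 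The cap $R^{1/4}$ is chosen exactly so that the capped rate is $o(R)$ while the union bound still vanishes.

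The resulting coupling, uniform in $K,M$ (Lemma~\ref{lem_cauchy_coordinate}), gives tightness of the finite-window restrictions and identifies all finite-dimensional projections. This yields convergence of the full sequence, and also makes your separate Aldous argument unnecessary.
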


We first collect uniform moment bounds and prove a Cauchy property of the approximating
sequence through an explicit coupling.

\begin{lemma}\label{lem_sup_moment}
	For every $ R \in \NN $, $x \in [-R, R] \cap \ZZ$, and $ r \geqslant 1 $, we have
	\begin{equation*}
		\begin{aligned}
			\sup_{ K \geqslant R }
			\mathbf E_{\rho}^{(K)}\big[
			\big|\sup_{ t \in [0, T]}  \eta
			_{x}^{(K)}(t) \big|^{r}\big]< \infty\,.
		\end{aligned}
	\end{equation*}
\end{lemma}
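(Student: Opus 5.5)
We need to bound $\mathbf E^{(K)}_\rho[\sup_{t\le T}\eta^{(K)}_x(t)^r]$ uniformly in $K\geqslant R$. The natural route is a martingale decomposition of a single occupation variable on the torus, with every term controlled by stationarity. Throughout, $L$ is fixed (the statement concerns the unscaled generator $\gen^{(K)}$), so all constants may depend on $\alpha,\rho,p,q,T,r$ but not on $K$. By translation invariance of the periodic dynamics and of $\nu^{\TT_K}_\rho$, the law of $(\eta^{(K)}_x(t))_t$ does not depend on $x$, so it suffices to treat $x=0$.

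\textbf{Decomposition.} On the finite torus the total mass is conserved and a.s. finite. Hence Dynkin's formula applies to $f(\eta)=\eta_0$, which we justify by localisation on the total particle number, with all moments under $\nu_\rho^{\TT_K}$ being finite. It gives
\begin{equation*}
\eta_0(t)=\eta_0(0)+\int_0^t \big(j^{(K)}_{-1,0}-j^{(K)}_{0,1}\big)(\eta(s))\ud s+M_t ,
\end{equation*}
where the drift is the net current through the two edges at $0$. This drift is a quadratic polynomial in $(\eta_{-1},\eta_0,\eta_1)$ with coefficients bounded independently of $K$. The predictable bracket of $M$ is
\begin{equation*}
\langle M\rangle_t=\int_0^t \Gamma(\eta_0)\ud s, \qquad \Gamma(\eta_0)=\sum_{|y|=1}\big(c(y)\eta_0(\alpha+\eta_y)+c(-y)\eta_y(\alpha+\eta_0)\big),
\end{equation*}
which is again a local quadratic polynomial.

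\textbf{Bounds.} Raise both sides to the power $r$, take $\sup_{t\le T}$, and use $(a+b+c)^r\lesssim_r a^r+b^r+c^r$.

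The initial term contributes $\EE_\rho[\eta_0^r]<\infty$ by \eqref{eq_eta_mean_var}.

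For the drift, use $\sup_t|\int_0^t g|\le\int_0^T|g|$ and Jensen's inequality in time to get
\begin{equation*}
\mathbf E\Big[\Big(\int_0^T|g(\eta(s))|\ud s\Big)^r\Big]\le T^{r-1}\int_0^T\mathbf E|g(\eta(s))|^r\ud s=T^r\,\EE_\rho[|g|^r].
\end{equation*}
The last equality is stationarity of $\nu^{\TT_K}_\rho$. The resulting expectation is finite, uniformly in $K$, because $g$ is a local quadratic polynomial and all moments of the negative binomial marginals are finite. Note that the marginal of $\nu_\rho^{\TT_K}$ on $\{-1,0,1\}$ coincides with that of $\nu_\rho$ once $K\ge 1$.

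For the martingale, apply the Burkholder--Davis--Gundy inequality for càdlàg martingales, in the form with predictable bracket plus maximal jump (as in \cite[Lemma~VII.3.34]{JS}):
\begin{equation*}
\mathbf E\Big[\sup_{t\le T}|M_t|^r\Big]\lesssim_r \mathbf E\big[\langle M\rangle_T^{r/2}\big]+\mathbf E\Big[\sup_{t\le T}|\Delta M_t|^r\Big].
\end{equation*}
Since jumps of $\eta_0$ have size one, the last term is at most $1$. The bracket term is handled exactly like the drift: Jensen's inequality in time plus stationarity bounds it by $T^{r/2}\EE_\rho[\Gamma(\eta_0)^{r/2}]$, which is finite and independent of $K$. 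For $r<2$, the Lyapunov inequality reduces the claim to $r=2$.

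\textbf{Main obstacle.} The delicate step is justifying Dynkin's formula and the BDG inequality for the unbounded function $\eta_0$ with unbounded (quadratic) rates. I would handle this by stopping at $\tau_n=\inf\{t:\sum_{\TT_K}\eta\ge n\}$. Mass conservation gives $\tau_n\ge T$ on $\{\sum_{\TT_K}\eta(0)<n\}$, and the localised process is a finite-state chain, for which the formula is classical. Because all the resulting bounds are obtained from stationary moments, they do not depend on $n$. Monotone convergence then removes the localisation and yields the stated uniform bound.
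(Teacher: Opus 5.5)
Your proof is correct, but it takes a different route from the paper.

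\textbf{The paper's route.} The paper never decomposes $\eta_x$ itself. It uses the pathwise domination $\sup_{t\le T}\eta^{(K)}_x(t)\le \eta^{(K)}_x(0)+N^{(K)}_x(T)$, where $N^{(K)}_x$ counts all jumps that change the occupation at $x$. It then bounds $\mathbf E^{(K)}_\rho[N^{(K)}_x(T)^r]$ by applying Dynkin's formula to $N^r$, using H\"older's inequality with the stationary moments of the intensity $R_x$ (which is exactly your $\Gamma(\eta_0)$), and closing with Gr\"onwall's lemma. Since $N_x$ is nondecreasing, the supremum in time comes for free and no maximal inequality for martingales with jumps is needed. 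The cost is the Gr\"onwall step, which tacitly needs a priori finiteness of $\mathbf E[N(t)^r]$, i.e.\ the same kind of localisation you make explicit.

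\textbf{Your route.} You get the supremum from a Burkholder-type maximal inequality instead. In exchange you obtain a direct, explicit $K$-independent bound of order $\EE_\rho[\eta_0^r]+T^r\EE_\rho[|g|^r]+T^{r/2}\EE_\rho[\Gamma(\eta_0)^{r/2}]+1$, straight from stationarity and without any Gr\"onwall argument.

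The two routes are closely linked. Every jump of your $M$ has size one, so its optional bracket $[M]_T$ is precisely the paper's counter $N^{(K)}_0(T)$. Using the classical BDG inequality in terms of $[M]$ would therefore lead you back to the paper's argument. Working with the predictable bracket $\langle M\rangle_T=\int_0^T\Gamma(\eta_0)(\eta(s))\ud s$ plus the trivial jump term bypasses it.

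\textbf{Two minor points.}
\begin{itemize}
\item The Jacod--Shiryaev lemma you cite is used in the paper only for the fourth moment. For arbitrary $r\ge 2$, the predictable-bracket-plus-maximal-jump inequality you need is the Burkholder--Rosenthal (Lenglart--L\'epingle--Pratelli) inequality, which is standard.
\item Because total mass is conserved, your $\tau_n$ only takes the values $0$ and $+\infty$. The localisation is therefore simply a restriction to the $\mathcal F_0$-measurable event $\{\sum_{\TT_K}\eta(0)<n\}$, on which the chain is finite-state. It works exactly as you describe, with monotone convergence at the end.
\end{itemize}
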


\begin{proof}
	Let $N_x^{(K)}(t)$ denote the
	number of jumps that occurred up to time $t$ and involved a change in occupation of $ x \in \ZZ$.
	$N_{x}^{(K)}$ is a counting process with rates given in terms of the jump
	intensity at $x$, namely
	\begin{equation*}
		\begin{aligned}
			R_{x}( \eta)
			:= \sum_{y \in \ZZ} \Big(c(y-x) \eta_{x} ( \alpha + \eta_{y}) + c(x-y)
			\eta_{y}( \alpha + \eta_{x}) \Big)\,,
		\end{aligned}
	\end{equation*}
	which has uniform moment bounds under $ \nu_{\rho}^{\TT_K}$, cf. \eqref{eq_eta_mean_var}.
	Therefore, by Dynkin's formula
	\begin{equation*}
		\begin{aligned}
			\mathbf E_{\rho}^{(K)}[ |N_{x}^{(K)} (t)|^{r} ]
			=
			\mathbf E_{\rho}^{(K)}\bigg[
				\int_{0}^{t}
				R_{x} ( \eta^{(K)}(s))
				\big(
				( N_{x}^{(K)} (s) +1 )^{r}
				- N_{x}^{(K)}(s)^{r}
				\big)
				\ud s
				\bigg]\,.
		\end{aligned}
	\end{equation*}
	Using stationarity with respect to $ \nu_{\rho}^{\TT_K}$ and the moment bounds of $
		R_{x}( \eta) $, the previous display implies
	\begin{equation*}
		\begin{aligned}
			\mathbf E_{\rho}^{(K)}[ |N_{x}^{(K)} (t)|^{r} ]
			\lesssim_{\alpha,p,q, r}
			\int_{0}^{t} 1 + \mathbf E_{\rho}^{(K)}[ |N_{x}^{(K)} (s)|^{r}]^{(r-1)/r}   \ud s\,,
		\end{aligned}
	\end{equation*}
	where we applied H\"older's inequality to the integrand and furthermore used $
		(N+1)^{r} - N^{r} \lesssim_{r} (1 + N^{r-1})$.
	Hence, by Gr\"onwall's lemma, $ \sup_{ K \geqslant R}  \mathbf E_{\rho}^{(K)}[ |N_{x}^{(K)}
		(T)|^{r} ] < \infty$.
	Lastly, we observe the crude bound
	\begin{equation*}
		\begin{aligned}
			\sup_{ t \in [0, T]}  \eta
			_{x}^{(K)}(t) \
			\leq \eta_{x}^{(K)} (0 ) + N^{(K)}_{x} (T)\,,
		\end{aligned}
	\end{equation*}
	which finishes the proof.
\end{proof}

\begin{lemma}[Finite volume coupling]\label{lem_cauchy_coordinate}
	Let $ \Lambda \subset \ZZ$ be a finite interval and $ T \in (0, \infty)$.
	For every $ \varepsilon \in (0,1)$
	there exists a constant $ R \in \NN$ such that for all $ M, K \geqslant R +1$
	we find a coupling $ \mathbf P_{\rho}^{(K,M)} $ between the finite volume
	approximations $ \mathbf P_{\rho}^{(K)} $ and $ \mathbf P_{\rho}^{(M)} $ defined above, such that
	\begin{equation*}
		\begin{aligned}
			\mathbf P_{\rho}^{(K,M)} \big(
			\eta^{(K)}_{x} (t) = \eta^{(M)}_{x}(t)\,,\ \forall x \in
			\Lambda\,, \ t \in [0,T]
			\big)
			\geqslant 1- \varepsilon\,.
		\end{aligned}
	\end{equation*}
\end{lemma}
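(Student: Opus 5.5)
We will build the coupling by a graphical (Harris-type) construction that uses common randomness for both periodic systems on a large central window, and then show that disagreement can only reach $\Lambda$ by travelling inward from far away, which is unlikely on $[0,T]$. We write $\Lambda\subset[-r_0,r_0]$.

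\emph{Initial data.} We sample a single configuration $\eta\sim\nu_\rho$ on $\ZZ$. We set $\eta^{(K)}(0)=\eta|_{\TT_K}$ and $\eta^{(M)}(0)=\eta|_{\TT_M}$. These agree on $\TT_{K\wedge M}$, so the two marginals are correct, since both $\nu^{\TT_K}_\rho$ and $\nu^{\TT_M}_\rho$ are products of the same one-site law.

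\emph{Dynamics.} The rates $\eta_x(\alpha+\eta_y)$ are unbounded, so a bounded-rate Poisson clock does not suffice. For every oriented edge $(x,y)$ with $|x-y|=1$ we therefore attach a Poisson random measure $\mathcal P_{x,y}$ on $[0,\infty)\times[0,\infty)$ with intensity $c(y-x)\,\ud t\,\ud u$. A jump from $x$ to $y$ is performed at time $t$ whenever $(t,u)\in\mathcal P_{x,y}$ and $u\le \eta_x(t-)(\alpha+\eta_y(t-))$. For edges inside $[-R,R]$ both systems read the same measures. Edges that leave $[-R,R]$, together with the periodic wrap-around edges, are driven by independent measures for each system. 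Each system then has the correct law $\mathbf P^{(K)}_\rho$, respectively $\mathbf P^{(M)}_\rho$. Both are finite-state chains once the particle number is fixed, and the construction is the standard thinning representation.

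\emph{Propagation of discrepancies.} We define $D(t)$ as the set of sites where the two systems differ, together with the sites whose incident edge rates differ. At time $0$ the systems agree on $[-R,R]$, and the rates for edges inside $[-R,R]$ coincide as long as the configurations coincide at both endpoints. A discrepancy can therefore move inward only through an edge $(z,z+1)$ with $z+1$ outside $D$ and $z$ inside $D$. This requires an atom of $\mathcal P_{z,z+1}$ or $\mathcal P_{z+1,z}$ below the maximum of the two systems' rates at that edge, where that maximum is bounded by the local particle numbers of both systems. To reach $\Lambda$ before time $T$, a chain of at least $R-r_0$ such successive ``infection'' events must occur in order. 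We bound each step through the jump counts $N_x^{(K)},N_x^{(M)}$ from Lemma~\ref{lem_sup_moment}: the number of Poisson points under the rate envelope at $x$ on $[0,T]$ is at most the jump count at $x$ in either system plus a term controlled by the same moment bounds. The probability of such a path then has to be estimated.

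\emph{Main obstacle and plan for it.} The key step is to control infection chains over distance $R-r_0$ when the rates are unbounded, where the naive argument gives $(CT)^n/n!$ only for bounded rates. To handle this, we first fix a level $\Theta$ and use stationarity plus Lemma~\ref{lem_sup_moment} with a union bound over the sites of $[-R,R]$. The aim is to show that the event
\[
\sup_{t\le T}\max_{|x|\le R}\eta_x(t)\le \Theta_R
\]
holds for both systems with high probability, for some $\Theta_R$ growing polynomially in $R$. This uses high moments: the union bound gives probability at most $R\,C_r\Theta_R^{-r}$. On this event all edge rates in $[-R,R]$ are bounded by $\Theta_R(\alpha+\Theta_R)$. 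The discrepancy front is then dominated by a Poisson process of rate $\lambda_R\lesssim\Theta_R^2$, so the probability of $R-r_0$ steps is at most $\PP(\mathrm{Poi}(\lambda_RT)\ge R-r_0)$.

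This is small only if $\lambda_R T\ll R$, which is delicate if $\Theta_R$ is polynomial in $R$. We will therefore instead use a slowly growing cutoff and control the number of sites that ever exceed it. An alternative we consider is to use the inward-edge flux only at the current front: since the front moves one site at a time, we need only the rates at the front sites. The expected number of front advances can then be bounded through the stationary expected rate $\EE_\rho[\eta_x(\alpha+\eta_y)]<\infty$, uniformly in $K$ and $M$. In the cleanest version we would compare the front with a nearest-neighbour path whose step rate has uniformly bounded expectation. The probability of reaching distance $R$ within time $T$ then tends to $0$ as $R\to\infty$, and choosing $R$ large makes it less than $\varepsilon$. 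Making this last domination rigorous, given that the front rate depends on the evolving configuration, is where we expect the real work to be.
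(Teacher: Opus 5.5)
Your coupling is fine. The shared Poisson-thinning construction on edges inside $[-R,R]$ is, if anything, slightly cleaner than the paper's clock-plus-common-uniform coupling, because it is valid for all times. You also correctly reduce the problem to two estimates. The first is the union bound $\lesssim R\,C_r\Theta_R^{-r}$ for some site of $[-R,R]$ exceeding a level $\Theta_R$ before time $T$ in either system. The second is $\PP(\mathrm{Poi}(\lambda_RT)\geq R-r_0)$ with $\lambda_R\lesssim\Theta_R^2$.

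The gap is that you then call a polynomial $\Theta_R$ ``delicate'', abandon this route, and end with alternatives you do not carry out. The front-with-bounded-expected-rate idea in particular would need more than stationarity: the front sits at an adaptively chosen random site, and $\nu_\rho$ gives no moment control there. As written, the proposal does not prove the lemma.

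The missing observation is that the two constraints $R\,\Theta_R^{-r}\to0$ and $\Theta_R^2\ll R$ are compatible. This holds because Lemma~\ref{lem_sup_moment} gives bounds uniform in $K$ for moments of \emph{every} order $r$. Take $\Theta_R=R^{1/4}$. Then $\lambda_R\lesssim_\alpha\sqrt R$, so $\PP(\mathrm{Poi}(C\sqrt R\,T)\geq R-r_0)\to0$, while the union bound is $\lesssim R^{1-r/4}\to0$ for any $r>4$. This is exactly the paper's argument, phrased via the stopping time $\tau_R$ at which some site in $[-R,R]$ first carries $R^{1/4}$ particles.

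Your worry about rigorously dominating a front whose rate depends on the evolving configuration also disappears in your own construction. On $\{\tau_R>T\}$, each inward infection step across an edge of $[-R,R]$ needs an atom of that edge's Poisson measures in $[0,T]\times[0,\lambda_R]$. Reaching $\Lambda$ therefore forces a time-ordered chain of such atoms along $R-r_0$ consecutive edges. By a static computation for independent Poisson processes, this has probability at most $2(\lambda_RT)^{R-r_0}/(R-r_0)!$.
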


The proof follows Harris' idea of constructing
finite-range interacting particle systems \cite{Harris72}.
After localisation in a large box, jump rates are bounded and
boundary discrepancies can reach an observed window only along
a long oriented path, whose probability is small.

\begin{proof}
	With a slight abuse of notation, let us write $ [-R, R ] $ instead of $ [ -R, R ]
		\cap \ZZ$ and take $ R$ to be large enough such that $ \Lambda \subset [ -R, R]$.

	To construct the coupling between $ \mathbf P_{\rho}^{(K)} $ and $
		\mathbf P_{\rho}^{(M)} $ (let us consider $ M \geqslant K > R$),
	we first couple the initial laws $ \nu_{\rho}^{\TT_K}$ and $ \nu_{\rho}^{\TT_M}$, such
	that they agree on the domain of $ \TT_{K} $.
	Next, define $ \tau_{R}$ to be the first time at which one of the processes
	accumulates at least $ R^{1/4}$ particles on a site in $[-R, R]$, i.e.
	\begin{equation*}
		\begin{aligned}
			\tau_{R} :=
			\inf \big\{ t \geqslant 0 \,: \, \exists
			\text{$x \in [-R, R]$, $ i \in
					\{K,M\}$ such that $ \eta_{x}^{(i)}(t)
					\geqslant R^{1/4} $ }\big\}\,.
		\end{aligned}
	\end{equation*}
	Consequently, for any $ t < \tau_{R}$ any jump within the interval $ [-R,
				R]$ has a rate bounded by
	\begin{equation*}
		\begin{aligned}
			\overline{c}_{R} := ( p + q ) R^{1/4} ( \alpha+ R^{1/4})
			\lesssim_{p,q,  \alpha} \sqrt{R} \,.
		\end{aligned}
	\end{equation*}
	Hence, up to time $ \tau_{ R}$ we may couple the two finite-volume Markov chains
	as follows: for every nearest-neighbour pair $(x,y)$ in $ [-R,R]$ we consider an exponential
	clock of rate $ \overline{c}_{R }$. Upon ringing at time $t$, the jump in the
	chain $ \eta^{(i)}$ is
	performed provided
	\begin{equation*}
		\begin{aligned}
			U \leqslant
			\frac{c(y-x)}{ \overline{c}_{R }} \eta^{(i)}_{x}( t-) ( \alpha + \eta
			^{(i)}_{y}(t-))\,, \quad i \in \{K,M \} \,, \quad x ,y \in [-R,R]\,,
		\end{aligned}
	\end{equation*}
	where $ U $ is a uniform random variable on $ [0,1] $, which is sampled independently
	whenever one of the clocks rings (but the same for $i=K,M$).
	In particular, whenever the local configurations at $ x$ and $y$ agree, the same decision is made in both chains.
	For jumps outside of $[-R, R] $ both chains may evolve independently.
	Thus, only jumps near the boundary $ \{-R, R\}$ can possibly lead to any
	disagreement between the processes inside $ \Lambda$.
	Moreover, any such disagreement must then propagate over a distance of length $ d_{R}:= \mathrm{dist}(
		\Lambda, \{ -R, R \}) $ to reach the interval $
		\Lambda$.

	On the event $ \{ \tau_{R} > T \}$, any disagreement from the boundary $\{-R, R \}$ can
	at most travel at rate $ \overline{c}_{R} $ across the distance $ d_{R}$ to lead to a
	disagreement within $ \Lambda$. Thus,
	the probability of disagreement propagation can be bounded by the probability that a totally asymmetric, 
	continuous time random walk (started at the origin) of rate $ 2\overline{c}_{R}$ reaches $ d_{R}$ before time $
		T$. In particular, as $ R \to \infty$ we have
	\begin{equation*}
		\begin{aligned}
			 & \sup_{K,M>R}\mathbf P_{\rho}^{(K,M)} ( \text{disagreement reaches $ \Lambda $ from $\{-R, R\}$}  ,
			\tau_{R}> T )
			\leqslant
			2
			\mathbf P \Big( N_{T} \geqslant d_{R} \Big)
			%2 \sum_{k = d_{R}}^{\infty} \frac{( 2 \overline{c}_{R} T )^{k}}{k !}
			%\lesssim
			%\frac{(2e \overline{c}_{R} T )^{d_{R}}}{d_{R}^{d_{R}}} e^{- 2
			%\overline{c}_{R} T}
			\to 0 \,,
		\end{aligned}
	\end{equation*}
	where $ N_{T}\sim \mathrm{Poisson}(2 \overline{c}_{R}T)$ denotes the number of random
	walk jumps performed up to time $T$.
	The limit indeed vanishes, since $ \sqrt{R} \sim \overline{c}_{R} \ll d_{R} \sim R $.

	On the other hand, to control the event $ \{ \tau_{R} \leqslant T \}$, we use
	\begin{equation*}
		\begin{aligned}
			\mathbf P_{\rho}^{(K,M)} ( \tau_{R} \leqslant T )
			\leqslant
			\sum_{i \in \{K,M\}}
			\sum_{x= - R}^{R} \mathbf P_{\rho}^{(i)} \big( \sup_{ t \in [0, T]}  \eta
			_{x}^{(i)}(t) \geqslant  R^{1/4}\big)
			\lesssim  R^{1-r/4}
			\sum_{i \in \{K,M\}}
			\mathbf E_{\rho}^{(i)}\big[
			\big|\sup_{ t \in [0, T]}  \eta
			_{0}^{(i)}(t) \big|^{r}\big]\,.
		\end{aligned}
	\end{equation*}
	Because for any $ r >4 $ the moment on the right-hand side is uniformly bounded in $i$
	(see Lemma~\ref{lem_sup_moment}),
	the above display yields $ \lim_{ R \to \infty}  \sup_{M,K>R}\mathbf P_{\rho}^{(K,M)} ( \tau_{R} \leqslant T )=0$.
\end{proof}

\begin{proof}[Proof of Proposition~\ref{prop_ip_exists}]
	For every finite $\Lambda \subset \ZZ$, Lemma~\ref{lem_cauchy_coordinate} implies tightness of the
	coordinate restricted processes $ ( \eta_{x}^{(K)} (
		t))_{t \geqslant 0, x \in \Lambda}$ in $ D ([ 0, \infty), \NN_{ 0}^{\Lambda} ) $.
	Because $ \NN_{0}^{\ZZ}$ is equipped with the product topology, this implies
	tightness of the probability measures $ ( \mathbf P_{\rho}^{(K)})$ in $ D ([ 0, \infty),
		\NN_{0}^{\ZZ}
		) $ by a diagonal argument.
	Moreover, Lemma~\ref{lem_cauchy_coordinate} uniquely identifies all finite--dimensional
	projections of any
	accumulation point, hence, the limit $ \mathbf P_{\rho}$ is determined uniquely
	\cite[Corollary~3.9.1]{EthierKurtz86}.
		Since each $\mathbf P_{\rho}^{(K)}$ is stationary and
$\mathbf P_{\rho}^{(K)}\Rightarrow\mathbf P_{\rho}$, also the limiting
process is stationary. Moreover, its one-time marginal is $\nu_{\rho}$,
because $\nu_{\rho}^{\TT_K}\Rightarrow \nu_{\rho}^{\otimes\ZZ} =  \nu_{\rho}$.

	It remains to verify that the limiting measure solves the martingale problem.  Let $f : \Omega \to \RR $ be a bounded local
	function and let $K$ be large enough such that $ \gen^{( K)} f = \gen f $.
	Moreover, we define the functional
	\begin{equation*}
		\begin{aligned}
			\eta= ( \eta(s))_{s \in [0,t]} \mapsto	M_t (f)[ \eta]
			:=
			f(\eta(t))-f(\eta(0))
			-
			\int_0^t\gen f(\eta(s)) \ud s\,.
		\end{aligned}
	\end{equation*}
	Thus, by Dynkin's formula $ M_{t}^{K}(f):= M_{t}(f)[ \eta^{(K)}]$
	defines a martingale for every $K$, with predictable quadratic variation of the form $ \int_{0}^{t}
		\Gamma f ( \eta^{(K)} (s))\ud s$.
	Hence, it is only left to show that $ M_{t} (f) [ \eta]$ 
	is a martingale under $ \mathbf P_{\rho}$.

	To this end, we fix $0 \leqslant s < t \leqslant T $ and a bounded measurable cylinder function $H$ which depends only on finitely many marginals of the form $ \eta_{x_i}( r_{i})$ with $r_{i} \in [0,s]$.
	Then, using weak convergence of $ \mathbf P_{\rho}^{(K)}$ to $ \mathbf P_{\rho}$ we claim
	\begin{equation}\label{eq_supp1_exIP}
		\begin{aligned}
			\mathbf E_{\rho}
				[( M_{t} (f) - M_{s}(f)) H( \eta) ]
			=
			\lim_{K \to \infty}
			\mathbf E_{\rho}^{(K)}
			[( M_{t}^{K} (f) - M_{s}^{K}(f)) H ( \eta^{(K)})]=0\,,
		\end{aligned}
	\end{equation}
	which can be extended to the whole natural filtration of $ \eta$ up to
	time $ s$ \cite[Appendix~Corollary~4.4]{EthierKurtz86}.
	Indeed, the first equality in \eqref{eq_supp1_exIP} is justified because $ f ,H$ are bounded  and
	\begin{equation*}
		\begin{aligned}
			|\gen f ( \eta ) |
			\lesssim_{p,q, \alpha ,f }
			1+
			\sum_{x \in \Lambda}
			\eta_{x}^{2} \,,
		\end{aligned}
	\end{equation*}
	belongs to $ L^{q}( \nu_{\rho})$, $ q\geqslant 1$, by \eqref{eq_eta_mean_var},
	where $ \Lambda \subset \ZZ $ denotes the finite subset that $ f$ and $H$ actually depend on (extended by the range of $ c$). Moreover, the path functionals induced by $f$ and $H$ are almost surely continuous under $\mathbf P_{\rho}$, and also the map $ \omega \mapsto \int_{0}^{t} \gen f ( \omega (s)) \ud s$  is continuous in $D([0,T],\mathbb N_0^{\mathbb Z}) $. This allows to exchange limit and expectation \eqref{eq_supp1_exIP} by weak convergence.
	The same argument can be applied to show that the functional $ t \mapsto  M_{t}(f) [
				\eta]^{2}- \int_{0}^{t} \Gamma f ( \eta(s)) \ud s$ is a martingale, thus, verifying the
	explicit form of the predictable quadratic variation.

	It is only left to verify the backward martingale problem.
	Fix $T>0$ and let $\overleftarrow{\eta}^{(K),T}$ and
	$\overleftarrow{\eta}^{T}$ denote the càdlàg reversals of
	 $\eta^{(K)}$ and $\eta$, respectively. 
	More precisely,
	 \begin{equation*}
		\overleftarrow{\eta}^{T}(t)
		:=
		\begin{cases}
			\eta((T-t)-), & 0\leqslant t<T, \\
			\eta(0),      & t=T,
		\end{cases}
	\end{equation*}
	and analogously for $\eta^{(K)}$.
	%Conditionally on the conserved total particle number, the finite-volume
	%process is a finite-state stationary Markov chain. 
	This reversed process
	 solves the martingale problem associated
	to $(\gen^{(K)})^{\ast}$, with carr\'e du champ
	$\Gamma^{(K),\ast}f
		:=(\gen^{(K)})^{\ast}(f^{2})-2f(\gen^{(K)})^{\ast}f$,
	with respect to its full reversed natural filtration.
	For every bounded local $f$, one has
	$(\gen^{(K)})^{\ast}f=\gen^{\ast}f$ and
	$\Gamma^{(K),\ast}f=\Gamma^{\ast}f$ for all sufficiently large $K$.
	The passage $K\to\infty$ for the reversed martingale and its compensated
	square is then identical to the forward argument above.
\end{proof}

We also record a useful bound which is a consequence of Proposition~\ref{prop_ip_exists} and Lemma~\ref{lem_sup_moment}.

\begin{corollary}\label{cor_growth}
	For every $T>0$, there exists an almost surely finite random variable $C_T$
	such that
	\begin{equation*}
	\begin{aligned}
	\sup_{t\in[0,T]}\eta_x(t)
		\leq C_T(1+|x|),
		\qquad \forall x\in\ZZ\,.
	\end{aligned}
	\end{equation*}
\end{corollary}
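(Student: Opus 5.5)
The plan is a Borel--Cantelli argument over sites, combining the uniform moment bounds of Lemma~\ref{lem_sup_moment}, transferred to the limiting process, with the stationarity and translation invariance of the construction. Fix $T>0$ and set $X_x:=\sup_{t\in[0,T]}\eta_x(t)$ under $\mathbf P_\rho$. It suffices to show that, almost surely, $X_x\leq 1+|x|$ for all but finitely many $x\in\ZZ$. One then takes
\begin{equation*}
C_T:=\max\Big\{1,\ \max_{x\,:\,X_x>1+|x|}\frac{X_x}{1+|x|}\Big\},
\end{equation*}
which is a maximum over a finite set and hence almost surely finite.

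\textbf{Step 1: a uniform moment bound for the limit.} I would show that $\mathbf E_\rho[X_x^r]\leq C_r$ for some $r>1$ (take $r=3$), with $C_r$ independent of $x$. The periodic dynamics on $\TT_K$ is translation invariant on the torus, and $\nu_\rho^{\TT_K}$ is a product measure. Hence the law of $\sup_{t\le T}\eta^{(K)}_x(t)$ does not depend on $x\in\TT_K$, so Lemma~\ref{lem_sup_moment} applied at $x=0$ gives
\begin{equation*}
\sup_K \mathbf E^{(K)}_\rho\big[\sup_{t\le T}\eta^{(K)}_x(t)^r\big]=:C_r<\infty,
\end{equation*}
uniformly in $x$ once $K\geq|x|$. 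I then pass to the limit $\mathbf P_\rho^{(K)}\Rightarrow\mathbf P_\rho$ from Proposition~\ref{prop_ip_exists}. Because jumps can occur at the fixed time $T$ with positive probability, the map $\omega\mapsto\sup_{t\le T}\omega_x(t)$ is only lower semicontinuous on $D$. To avoid this, I would use the supremum over $[0,T']$ for some $T'>T$ at which the limit almost surely has no jump; this bounds the $[0,T]$ supremum from above. Since $y\mapsto y^r$ is nonnegative, Fatou's lemma (or Portmanteau) then gives $\mathbf E_\rho[X_x^r]\leq C_r$.

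\textbf{Step 2: Borel--Cantelli.} By Markov's inequality,
\begin{equation*}
\mathbf P_\rho(X_x>1+|x|)\leq C_r(1+|x|)^{-r},
\end{equation*}
which is summable over $x\in\ZZ$ when $r=3$. The Borel--Cantelli lemma therefore gives that only finitely many $x$ violate $X_x\le 1+|x|$, which completes the argument.

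The main obstacle is the uniformity in $x$ in Step 1. Lemma~\ref{lem_sup_moment} is stated for $x\in[-R,R]$, with a supremum over $K\geq R$. I will rely on the translation invariance of the periodic approximations to make the constant independent of $x$. If this is considered insufficient, the alternative is to rerun the Grönwall argument from the proof of Lemma~\ref{lem_sup_moment}: there the constants depend only on $\alpha,p,q,r$ and on the stationary moments of $R_x$, and neither of these depends on $x$.
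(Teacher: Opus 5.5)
Your proposal is correct and follows essentially the same route as the paper: a moment bound for $\sup_{t\le T}\eta_x(t)$ that is uniform in $x$ (from Lemma~\ref{lem_sup_moment}, translation invariance of the periodic approximations and weak convergence), then Markov's inequality and Borel--Cantelli; the paper uses $r=2$, which already suffices since $\sum_{x\in\ZZ}(1+|x|)^{-2}<\infty$. One small correction: $\omega\mapsto\sup_{t\le T}\omega_x(t)$ is upper, not lower, semicontinuous on the Skorokhod space, which is exactly why Portmanteau cannot be applied directly in the Fatou direction. Your passage to a horizon $T'>T$ that is almost surely not a jump time repairs this, and in fact stationarity of the limit already excludes jumps at any fixed time, so $T'=T$ would do.
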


\begin{proof}
	By Proposition~\ref{prop_ip_exists}, Lemma~\ref{lem_sup_moment}, and the Portmanteau theorem
	we see that
	\begin{equation*}
	\begin{aligned}
	\sup_{x\in\ZZ}
		\mathbf E_{\rho}\Big[
			\sup_{t\in[0,T]}\eta_x(t)^2
			\Big]
		\leqslant
		\liminf_{K\to\infty}
		\mathbf E_\rho^{(K)}\Big[
			\sup_{t\leq T}\eta_{0}^{(K)}(t)^2
			\Big]
		\leq C_T'\,,
	\end{aligned}
	\end{equation*}
	for some deterministic $C_T'<\infty$, where spatial translation invariance
	was used to obtain uniformity in $x$. Hence, Markov's inequality yields
	\begin{equation*}
	\begin{aligned}
	\sum_{x\in\ZZ}
		\mathbf P_{\rho}\Big(
		\sup_{t\in[0,T]}\eta_x(t)>1+|x|
		\Big)
		\leq
		C_T'\sum_{x\in\ZZ}(1+|x|)^{-2}
		<\infty\,.
	\end{aligned}
	\end{equation*}
	The Borel--Cantelli lemma therefore shows that only finitely many $x$ violate
	the stated bound with $C_T=1$. Absorbing these finitely many exceptions into
	$C_T$ proves the claim.
\end{proof}

\subsection{A local Kipnis--Varadhan inequality}

The celebrated Kipnis--Varadhan inequality provides a general
estimate of fluctuations of additive functionals in terms of a
$H^{-1}$ bound.
Because its proof requires knowledge of the infinitesimal generator associated to $(
	\eta(t))_{t \geqslant 0} $ (which we are not addressing in this work), we will derive the
necessary bound from the finite--dimensional
approximations $ \gen^{(K)}$ instead.
The symmetric part of these approximations is given in terms of
\begin{equation*}
	\gens^{(K)}
	=
	\frac{1}{2}
	\big(\gen^{(K)}+(\gen^{(K)})^*\big)\,,
\end{equation*}
which agrees with \eqref{eq_def_sym_K}.\footnote{When applying Lemma~\ref{lem_local_kv_finite_volume} in
 	Section~\ref{sec_sg}, we pick up an extra factor $ L^{-2}$ due to the speed-up in
 	\eqref{eq_genIP}, i.e. $ \gen^{(K)}_{L}= L^{2}\gen^{(K)}$.}
We restrict ourselves to local functions $ F :[0,T] \times
	\Omega \to \RR$, by which we
mean that there exists a finite subset $ \Lambda \subset \ZZ$ such that
for every $ t \in [0,T]$ the function $ F ( t, \cdot) $ is local and
only depends on the configuration in $ \Lambda$.

\begin{lemma}[Kipnis--Varadhan input]
	\label{lem_local_kv_finite_volume}
	Let $ F \in C^{\infty}([0,T]; L^{2}( \nu_{\rho}))$
	be local
	such that for every~$K$ large enough
	\begin{equation*}
		\begin{aligned}
			\EE_{\rho}^{(K)}\bigg[ F ( t, \eta) \bigg| \sum_{x \in
					\TT_{K}} \eta_{x} \bigg] =0
			\,,
		\end{aligned}
	\end{equation*}
	for almost every $ t \in [0,T]$.
	Then, for the stationary infinite-volume process constructed in Proposition~\ref{prop_ip_exists},
	\begin{equation*}
		\mathbf E_{\rho}
		\left[
			\sup_{t \in [0,T]}
			\left|
			\int_0^t F(s,\eta(s))\ud s
			\right|^2
			\right]
		\leq
		C_{\mathrm{KV}}
		\liminf_{K\to\infty}
		\int_0^T
		\|F(s,\cdot)\|_{-1,\TT_K}^2\ud s\,,
	\end{equation*}
	with a universal constant $C_{\mathrm{KV}}$.
	The norm $ \| \cdot \|_{-1, \TT_K}$ was defined in \eqref{eq_def_hminus1_finitevol}.
\end{lemma}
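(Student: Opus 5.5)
We prove Lemma~\ref{lem_local_kv_finite_volume} by first establishing the inequality for each periodic approximation $\eta^{(K)}$ and then passing to the limit $K\to\infty$ with the weak convergence of Proposition~\ref{prop_ip_exists}.

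\textbf{Finite volume.} For fixed $K$, the process on $\TT_K$ is a stationary Markov chain. Conditionally on the conserved total mass $\sum_{x\in\TT_K}\eta_x=n$, it is a finite irreducible chain with invariant law $\nu_\rho^{\TT_K}(\cdot\,|\,n)$. The mean-zero assumption on each mass sector means $F(s,\cdot)$ is orthogonal to the constants there, so the resolvent equation is solvable sector by sector. We then run the classical Kipnis--Varadhan argument for non-reversible chains with a time-dependent integrand, as in \cite[Chapter~2]{KipnisLandim99}. For smooth $F$ in time we fix $\lambda>0$ and solve $(\lambda-\gen^{(K)})h_\lambda(s)=F(s)$. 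Dynkin's formula then splits $\int_0^tF\,\ud s$ into a forward martingale, a backward martingale from the time-reversed chain (which solves the $(\gen^{(K)})^\ast$ martingale problem), and the remainder terms $\lambda\int h_\lambda$ and $\int\partial_s h_\lambda$. The symmetrisation trick $\int_0^t F=\frac12(M_t+\overleftarrow M_{T}-\overleftarrow M_{T-t})$ plus remainders controls the supremum. Doob's inequality and the stationary identity $\mathbf E\langle M\rangle_T=2\int_0^T\langle h,-\gens^{(K)}h\rangle\ud s$ bound the martingales. We use the standard fact that $\langle h_\lambda,-\gens^{(K)}h_\lambda\rangle+\lambda\|h_\lambda\|^2\le\|F\|_{-1,\TT_K}^2$: the antisymmetric part only improves the $H^{-1}$ bound, since $\langle h,-\gen^{(K)} h\rangle=\langle h,-\gens^{(K)} h\rangle$. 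Letting $\lambda\to0$ gives
\[
\mathbf E_\rho^{(K)}\Big[\sup_{t\le T}\Big|\int_0^tF(s,\eta^{(K)}(s))\ud s\Big|^2\Big]\le C_{\mathrm{KV}}\int_0^T\|F(s,\cdot)\|^2_{-1,\TT_K}\ud s,
\]
with a universal constant. The time dependence of $F$ produces the $\partial_s h_\lambda$ term. We handle it by a Riemann discretisation in time, freezing $F$ on small intervals where the smoothness of $F$ controls the error, or we absorb it because $\|\partial_s h_\lambda\|$ is bounded for fixed $\lambda$ and vanishes after the $\lambda\to0$ optimisation. We expect this step, keeping the constant universal, to be the main technical nuisance.

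\textbf{Passage to the limit.} Since $F$ is local, the functional $\omega\mapsto\sup_{t\le T}|\int_0^tF(s,\omega(s))\ud s|^2$ depends only on finitely many coordinates. It is continuous on $D([0,T],\NN_0^{\ZZ})$, because the time integral is continuous in the Skorokhod topology and the limit path has no fixed jump times. It is, however, unbounded. Two routes are available. The first is the Portmanteau lower bound for nonnegative lower semicontinuous functionals, $\mathbf E_\rho[\Phi]\le\liminf_K\mathbf E^{(K)}_\rho[\Phi]$, applied to truncations $\Phi\wedge R$ followed by $R\to\infty$ via monotone convergence. The second is uniform integrability from the moment bounds of Lemma~\ref{lem_sup_moment}, since $|F(s,\eta)|$ is polynomially bounded in finitely many coordinates. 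The first route is cleaner. Combined with the finite-volume bound along $K$, it yields the stated inequality with $\liminf_K\int_0^T\|F\|^2_{-1,\TT_K}\ud s$.
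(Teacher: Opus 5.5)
Your architecture matches the paper's: a time-dependent Kipnis--Varadhan bound for each periodic chain (the paper imports it from \cite[Lemma~4.3]{CLO01}), followed by Portmanteau as $K\to\infty$. Your truncation $\Phi\wedge R$ with monotone convergence is a correct, slightly more careful version of the paper's last step. Working globally on $L^2(\nu_\rho^{\TT_K})$, where the resolvent is the direct sum of the sector resolvents, would also let you bypass the averaging identity \eqref{eq_kv_supp3}.

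The gap is in your sketch of the finite-volume inequality. You solve $(\lambda-\gen^{(K)})h_\lambda(s)=F(s)$ with the \emph{full} generator and then use the forward--backward symmetrisation. Let $M$ be the forward martingale of $h_\lambda(t,\eta(t))$ and $\overleftarrow M$ the martingale of $h_\lambda(T-r,\eta(T-r))$ for the reversed chain, whose generator is $(\gen^{(K)})^{\ast}$. These combine to
\[
M_t+\overleftarrow M_T-\overleftarrow M_{T-t}=-2\int_0^t\gens^{(K)}h_\lambda(s,\eta(s))\ud s ,
\]
with the boundary terms and the $\partial_sh_\lambda$ terms cancelling exactly. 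With your choice of $h_\lambda$ this gives
\[
\int_0^tF=\lambda\int_0^th_\lambda+\tfrac12\big(M_t+\overleftarrow M_T-\overleftarrow M_{T-t}\big)-\int_0^t\tfrac12\big(\gen^{(K)}-(\gen^{(K)})^{\ast}\big)h_\lambda\ud s .
\]
The last term is not controlled by $\langle h_\lambda,-\gens^{(K)}h_\lambda\rangle$; that would need a sector condition, which is not available. If you use only the forward martingale instead, you are left with $\sup_{t\le T}|h_\lambda(t,\eta(t))|$, which stationarity alone does not control. So your listed remainders, $\lambda\int h_\lambda$ and $\int\partial_sh_\lambda$, do not close the argument.

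The fix is to solve the \emph{symmetric} resolvent equation $(\lambda-\gens^{(K)})h_\lambda(s)=F(s)$. The energy estimate is unchanged:
\[
\lambda\|h_\lambda\|^2+\langle h_\lambda,-\gens^{(K)}h_\lambda\rangle=\langle F,h_\lambda\rangle\le\|F\|_{-1,\TT_K}\langle h_\lambda,-\gens^{(K)}h_\lambda\rangle^{1/2}.
\]
The displayed identity then gives $\int_0^tF=\lambda\int_0^th_\lambda+\frac12(M_t+\overleftarrow M_T-\overleftarrow M_{T-t})$ with no further remainder. Doob's inequality, together with $\mathbf E\langle M\rangle_T=2\int_0^T\langle h_\lambda,-\gens^{(K)}h_\lambda\rangle\ud s$ (and the same for $\overleftarrow M$), and letting $\lambda\to0$, yields a universal constant.

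In particular, the time dependence of $F$ causes no difficulty, and your proposed treatments of it would not have worked. The norm $\|\partial_sh_\lambda\|$ is only bounded by $\lambda^{-1}\|\partial_sF\|$ and does not vanish as $\lambda\to0$. Any bound routed through it involves $\partial_sF$ rather than $\int_0^T\|F\|_{-1,\TT_K}^2$. Freezing $F$ on $n$ time intervals and adding interval-wise supremum bounds costs a factor $n$ unless you keep the martingale structure across the intervals.
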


\begin{proof}
	For every $K \in \NN $
	and every $N \in \NN_{0}$ let $\langle\cdot,\cdot\rangle_{\TT_K,N}$ denote the inner
	product under the canonical measure $\nu_\rho^{\TT_K}(\ud \eta\mid
		\sum_{x \in \TT_{K}} \eta_{x}=N)$, which is stationary with respect to $
		\gen^{(K)}$ and $ \gens^{(K)}$. Moreover, set
	\begin{equation*}
		\|h\|_{-1,\TT_K,N}^2
		:=
		\sup_g
		\big\{
		2\langle h,g\rangle_{\TT_K,N}
		-
		\langle g,-\gens^{(K)}g\rangle_{\TT_K,N}
		\big\},
	\end{equation*}
	where the supremum is over functions on the finite sector configuration
	space with $N$ particles.
	The
	time-dependent  Kipnis--Varadhan inequality for finite state spaces, obtained by the
	forward--backward martingale decomposition on each sector (see for
	example \cite[Lemma~4.3]{CLO01}), gives
	\begin{equation*}
		\begin{aligned}
			 & \mathbf E_{\rho}^{(K)}
			\left[
				\sup_{t\leq T}
				\left|
				\int_0^t F(s,\eta^{(K)}(s))\ud s
				\right|^2
				\,\middle|\, \sum_{x \in \TT_{K}} \eta_{x}(0)=N
				\right]
			\leq
			C_{\mathrm{KV}}
			\int_0^T
			\|F(s,\cdot)\|_{-1,\TT_K,N}^2
			\ud s \,.
		\end{aligned}
	\end{equation*}
	Averaging over the total number of particles $N$ yields
	\begin{equation}\label{eq_kv_supp2}
		\begin{aligned}
			 & \mathbf E_{\rho}^{(K)}
			\left[
				\sup_{t\leq T}
				\left|
				\int_0^t F(s,\eta^{(K)}(s))\ud s
				\right|^2
				\right]
			\leq
			C_{\mathrm{KV}}
			\int_0^T
			\| F ( s, \cdot) \|_{-1, \TT_K}^{2}
			\ud s \,.
		\end{aligned}
	\end{equation}
	Here we used that for every $ h \in L^{2}( \nu_{\rho }^{\TT_K})$ such that 
	 $ \EE_{\rho}^{(K)}\big[ h \big| \sum_{x \in \TT_{K}} \eta_{x}\big]=0$, we have
	\begin{equation}\label{eq_kv_supp3}
		\begin{aligned}
			\EE_{\rho}^{(K)}
			\big[
			\|h\|_{-1,\TT_K, \sum_{x \in \TT_{K}} \eta
			_{x}}^2
			\big]
			= \| h\|_{-1, \TT_K}^{2}\,.
		\end{aligned}
	\end{equation}
	To see \eqref{eq_kv_supp3}, let $ g_{N}$ be such that it attains the
	supremum in the variational expression of $ \| h \|_{-1,\TT_K,N}$.
	For every $M \in \NN$, define the global test function
	\begin{equation*}
	\begin{aligned}
	g^{(M)}:= 
	\begin{cases}
		g_{N}\,, & \text{on } \big\{ \sum_{x \in \TT_{K}} \eta_{x}=N\big\} \text{ whenever } N \leqslant M\,,\\
		0 \,, & \text{otherwise}\,,
	\end{cases}
	\end{aligned}
	\end{equation*}
	which is admissible as an argument in \eqref{eq_def_hminus1_finitevol}. 
	Next, we notice
	\begin{equation*}
	\begin{aligned}
	\|h\|_{-1,\TT_K}^2
		&\geq
		2\langle h,g^{(M)}\rangle_{\rho,\TT_K}
	-
	\langle g^{(M)},-\gens^{(K)}g^{(M)}\rangle_{\rho,\TT_K}
		\\
		&=
		\sum_{N=0}^M
		\nu_\rho^{\TT_K}\Big( \sum_{x \in \TT_{K}} \eta_{x}=N\Big)
		\|h\|_{-1,\TT_K,N}^2
		\to \EE_{\rho}^{(K)}
		\big[
		\|h\|_{-1,\TT_K, \sum_{x \in \TT_{K}} \eta
		_{x}}^2
		\big]\,,
	\end{aligned}
	\end{equation*}
	where in the last step we took the limit $ M \to \infty$.
Conversely, writing 
$ g_{N}$ for the restriction to $\big\{ \sum_{x \in \TT_{K}} \eta_{x}=N\big\}$ of any admissible function $g$ from \eqref{eq_def_hminus1_finitevol},  we obtain
\begin{equation*}
\begin{aligned}
 2\langle h,g\rangle_{\rho,\TT_K}
-\langle g,-\gens^{(K)}g\rangle_{\rho,\TT_K}
&=
\sum_{N=0}^{\infty}
\nu_\rho^{\TT_K}\Big(\sum_{x \in \TT_K}\eta_x=N\Big)
\big(
2\langle h,g_{N}\rangle_{\TT_K,N}
-\langle g_{N},-\gens^{(K)}g_{N}\rangle_{\TT_K,N}\big)
\\
&\leq
\sum_{N=0}^{\infty}
\nu_\rho^{\TT_K}\Big(\sum_{x \in \TT_K}\eta_x=N\Big)
\|h\|_{-1,\TT_K,N}^{2}
=
\EE_{\rho}^{(K)}
		\big[
		\|h\|_{-1,\TT_K, \sum_{x \in \TT_{K}} \eta
		_{x}}^2
		\big]
\,.
\end{aligned}
\end{equation*}
Taking the supremum over $g$ proves the reverse inequality, which concludes \eqref{eq_kv_supp3}.

To complete the proof, we take $ \liminf_{K \to \infty}$ on both sides of \eqref{eq_kv_supp2} and apply the Portmanteau theorem to the left-hand side, jointly with Proposition~\ref{prop_ip_exists}. Indeed, it can be checked that the functional
	\begin{equation*}
	D([0,T];\NN_0^\ZZ)\ni \omega \mapsto
		\sup_{t\leq T}
		\left|
		\int_0^t F(s,\omega(s))\ud s
		\right|^2
	\end{equation*}
	is continuous, which allows the application of Portmanteau.
\end{proof}

\subsection{Non-local extension of Dynkin's formula}\label{sec_dynkin}

The extension of Dynkin's formula to non-local and polynomial observables only needs a $\nu_{\rho}$-stationary
process satisfying the martingale problem for \emph{bounded local} test functions, which we constructed in Proposition~\ref{prop_ip_exists}.
% The first step is a simple reduction. Using integration by parts, the time-dependent Dynkin formula follows from the
% time-independent one for functions of the form $F(t,\eta)=g(t)f(\eta)$, with
% $g\in C^1([0,T])$ and $f$ bounded and local.
% By linearity, the same formula holds for finite sums of such functions.
To this end, let $P$ be an arbitrary polynomial, $v\in C^1([0,T];\ell^1(\ZZ))$, and set
$P_R(n):=\mathds{1}_{\{n\le R\}}P(n)$.  We define the bounded local functions 
\begin{equation}\label{eq_form_F_dynkin_ext}
	F_R(t,\eta)
	:=
	\sum_{|x|\le R}
	v_{t,x}\,
	P_R(\eta_x)
	\qquad \text{approximating}\qquad
	F(t,\eta)
	:=
	\sum_{x\in\ZZ} v_{t,x}P(\eta_x)\,.
\end{equation}

The aim is to pass from $F_R$ to $F$ in Dynkin's formula
\eqref{eq_dynkin_mart}.  
We start by establishing $L^2(\nu_\rho)$-convergence of the fields themselves. 
First, we notice that for every
$t\in[0,T]$,
\begin{equation*}
	\begin{aligned}
		\big\| F_R(t,\eta)-F(t,\eta)
			\big\|_{L^{2}( \nu_{\rho} )}
		 & \leqslant
		\sum_{|x|\le R}|v_{t,x}|
		\big\|\mathds{1}_{ \eta_{0} \leqslant R }
		P(\eta_0)-P(\eta_0)\big\|_{L^{2}( \nu_{\rho} )}
		+
		\sum_{|x|>R}|v_{t,x}|
		\big\|P(\eta_0)\big\|_{L^{2}( \nu_{\rho} )}.
	\end{aligned}
\end{equation*}
The first term vanishes because $ \lim_{R \to \infty} \mathds{1}_{ \cdot > R}P ( \cdot)=0$  in $L^2(\nu_\rho)$, and the
second term vanishes uniformly in $t$ because
$v\in C([0,T];\ell^1(\ZZ))$ and \eqref{eq_eta_mean_var}. Thus,
\begin{equation*}
	\lim_{ R \to \infty}	\sup_{t\le T}
	\Big\{
	\big\|
	F_R(t,\eta)-F(t,\eta)
	\big\|_{L^{2}( \nu_{\rho} )}
	\vee
	\big\|\partial_{t}F_R(t,\eta)-\partial_{t}F(t,\eta)
	\big\|_{L^{2}( \nu_{\rho} )}
	\Big\}
	=0\,,
\end{equation*}
where the second part of the statement follows by replacing $v_{t,x}$ above with $\partial_t
	v_{t,x}$.

Similarly, we control the generator terms and show that
\begin{equation*}
	\begin{aligned}
		\lim_{ R \to \infty}
		\sup_{ t \in [0, T]} \big\|
		\gen F_{R} (t, \eta )
		- \gen F (t, \eta)
		\big\|_{L^{2}( \nu_{\rho} )} =0\,,
	\end{aligned}
\end{equation*}
with the canonical choice $ \gen F (t, \eta ) := \sum_{x \in \ZZ} v_{t,x} \gen \mP_{x} (
	\eta) $, where $ \mP_{x} ( \eta) := P ( \eta_{x})$. Indeed, the triangle inequality gives, uniformly in $ t \in
	[0,T]$
\begin{equation*}
	\begin{aligned}
		 & \bigg\|
		\sum_{|x| \leqslant R} v_{t,x}
		\gen \mP_{R,x} ( \eta)
		-
		\sum_{x \in \ZZ } v_{t,x}
		\gen \mP_{x} ( \eta)
		\bigg\|_{L^{2}( \nu_{\rho} )}
		\\
		 & \lesssim
		\| \gen \mP_{0}( \eta)-\gen \mP_{R,0} (
		\eta)\|_{L^2(\nu_\rho)}
		\sum_{|x|\le R}|v_{t,x}|
		+
		\| \gen \mP_{0} ( \eta) \|_{L^{2}( \nu_{\rho} )}
		\sum_{|x|>R}|v_{t,x}|\,.
	\end{aligned}
\end{equation*}
Here $\gen \mP_{R,0} ( \eta)  \to  \gen \mP_{0}(\eta)$ pointwise and thus in $ L^{2}(
	\nu_{\rho}) $, and the second moment in the last term is controlled by \eqref{eq_eta_mean_var}.

Now, Dynkin's formula can be applied to each $F_R$, which yields the martingales
\begin{equation*}
	M_t^R
	:=
	F_R(t,\eta(t))-F_R(0,\eta(0))
	-\int_0^t
	(\partial_s+\gen)F_R(s,\eta(s)) \ud s\,.
\end{equation*}
Taking the limit
$R\to\infty$,  the
preceding estimates imply
$\lim_{R \to \infty }
\sup_{t \in [0,T]}\| M_{t} - M_{t}^{R} \|_{L^{2}( \mathbf P_{\rho})} = 0$, 
where
\begin{equation*}
	\begin{aligned}
		M_t
		:=
		F(t,\eta(t))-F(0,\eta(0))
		-\int_0^t
		(\partial_s+\gen)F(s,\eta(s)) \ud s\,.
	\end{aligned}
\end{equation*}
Once more, the martingale property of $M$ follows from $ M^{R}$ by testing martingale increments against bounded cylindrical functionals of the past, cf. \eqref{eq_supp1_exIP}.

Along the same lines, we can show that
$M_t^2-\int_0^t\Gamma F(s,\eta(s))\ud s
$ is a martingale, which identifies the predictable quadratic variation. 
Here $ \Gamma F $ is the canonical extension of $ \Gamma$ to non-local functions of the
form~\eqref{eq_form_F_dynkin_ext}.

\section{Analytic lattice estimates}

\begin{lemma}\label{lem_usefulestimates_phi}
	For every $ \varphi \in \mS ( \RR)$ and every $p\geqslant 1$, we have
	\begin{equation*}
		\begin{aligned}
			\textnormal{(i)}   & \quad
			\sup_{L\in\NN}\sup_{t\in[0,T]}
			L^{-1}\sum_{x\in\ZZ}|\nabla_x^L\varphi_t|^p
			\leqslant \| \varphi' \|_{L^{p}( \RR )}^{p},  \\
			\textnormal{(ii)}  & \quad
			\sup_{L\in\NN}\sup_{t\in[0,T]}
			L^{-1}\sum_{x\in\ZZ}|\Delta_x^L\varphi_t|^p
			\leqslant \| \varphi'' \|_{L^{p}( \RR )}^{p}, \\
			\textnormal{(iii)} & \quad
			\lim_{L\to\infty}\sup_{t\in[0,T]}
			L^{-1}\sum_{x\in\ZZ}
			\big|
			\nabla_x^L\varphi_t
			-\partial_x\varphi_t(L^{-1}x)
			\big|^p
			=0,                                           \\
			\textnormal{(iv)}  & \quad
			\lim_{L\to\infty}\sup_{t\in[0,T]}
			L^{-1}\sum_{x\in\ZZ}
			\big|
			\Delta_x^L\varphi_t
			-\partial_x^2\varphi_t(L^{-1}x)
			\big|^p
			=0.
		\end{aligned}
	\end{equation*}
\end{lemma}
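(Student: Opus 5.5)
The four estimates are elementary lattice facts. The plan is to reduce each of them to a statement about a fixed Schwartz function. Since $\varphi_t=\varphi(\cdot-L^{1/2}j'(\rho)t)$ is a spatial translate of $\varphi$, every quantity depends on $t$ only through the shift $a:=L^{1/2}j'(\rho)t$. So it suffices to prove bounds that are uniform over all translates $\varphi(\cdot-a)$, $a\in\RR$, applied to the sampled points $L^{-1}x$. This uniformity is what removes the $\sup_t$.

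For (i), the fundamental theorem of calculus gives
\begin{equation*}
\nabla_x^L\varphi_t=L\int_{L^{-1}x}^{L^{-1}(x+1)}\varphi_t'(y)\,\ud y .
\end{equation*}
Jensen's inequality (the average over an interval of length $L^{-1}$) then yields
\begin{equation*}
|\nabla_x^L\varphi_t|^p\leqslant L\int_{L^{-1}x}^{L^{-1}(x+1)}|\varphi_t'(y)|^p\,\ud y .
\end{equation*}
Summing over $x$, multiplying by $L^{-1}$ and using translation invariance of the $L^p$ norm gives exactly $\|\varphi'\|_{L^p}^p$.

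For (ii), I would use the kernel representation
\begin{equation*}
\Delta_x^L\varphi_t=L\int_{-L^{-1}}^{L^{-1}}\big(1-L|z|\big)\varphi_t''(L^{-1}x+z)\,\ud z .
\end{equation*}
Here $L(1-L|z|)\,\ud z$ is a probability measure on $[-L^{-1},L^{-1}]$. Jensen's inequality again bounds $|\Delta_x^L\varphi_t|^p$ by the same average of $|\varphi_t''|^p$. Summing over $x$, the tent kernels $L(1-L|L^{-1}x+z-\cdot|)$ form a partition of unity, so $\sum_x$ of the averages integrates $|\varphi_t''|^p$ against $L\,\ud y$. Thus $L^{-1}\sum_x$ equals $\|\varphi''\|_{L^p}^p$.

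For (iii) and (iv), I would use the same integral representations. In (iii),
\begin{equation*}
|\nabla_x^L\varphi_t-\varphi_t'(L^{-1}x)|\leqslant L^{-1}\sup_{|y-L^{-1}x|\leqslant L^{-1}}|\varphi''(y-a)| .
\end{equation*}
In (iv), the analogous bound holds with $\varphi'''$: since the tent kernel is symmetric, the first-order term cancels, but bounding by the second derivative increment already gives $O(L^{-1})$ times a local supremum of $|\varphi'''|$. Using Schwartz decay, set $g(y):=\sup_{|w-y|\leqslant 1}|\varphi^{(k)}(w)|$, with $k=2$ for (iii) and $k=3$ for (iv). Then $g$ is bounded and integrable in every $L^p$, and the error is at most $L^{-1}g(L^{-1}x-a)$.

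To finish (iii) and (iv), I would bound $L^{-1}\sum_x g(L^{-1}x-a)^p$ uniformly in $a$ and $L$. Because $g$ decays faster than any polynomial, the supremum of $g^p$ over $[y-1,y+1]$ is still integrable. The sum is then dominated by $\int\sup_{|w-y|\leqslant 1}g(w)^p\,\ud y<\infty$, independently of $a$. Multiplying by $L^{-p}$ gives decay $O(L^{-p})$ uniformly in $t$, which proves the claim. The only mild point to watch is this uniform-in-translation Riemann-sum domination; everything else is direct calculus.
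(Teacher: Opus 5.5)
Your proof is correct and follows the paper's route. For (i) and (ii) you use fundamental-theorem-of-calculus representations plus Jensen's inequality; your tent-kernel formula for $\Delta_x^L$ is just the paper's double integral $L^{2}\int_{L^{-1}x}^{L^{-1}(x+1)}\int_{y-L^{-1}}^{y}\varphi''(z)\,\ud z\,\ud y$ after Fubini. For (iii) and (iv) you use mean-value bounds of order $L^{-1}$ times a local supremum of $\varphi''$, respectively $\varphi'''$. Your domination of $L^{-1}\sum_x g(L^{-1}x-a)^p$ by $\int\sup_{|w-y|\leqslant 1}g(w)^p\,\ud y$ also makes explicit the uniformity over the $L$-dependent shift $a=L^{1/2}j'(\rho)t$, which the paper only asserts by noting that $\varphi_t$ is a translate of $\varphi$.
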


\begin{proof}
	We recall the useful identities (by the fundamental theorem of calculus)
	\begin{equation}\label{eq_supp1_nabla}
		\begin{aligned}
			\nabla_x^L \varphi
			=
			L \int_{ L^{-1}x}^{L^{-1}(x+1)}
			\varphi'(y) \ud y
		\end{aligned}
	\end{equation}
	and
	\begin{equation}\label{eq_supp1_delta}
		\begin{aligned}
			\Delta_x^L \varphi
			 & =
			%L^{2} \int_{ L^{-1}x}^{L^{-1}(x+1)}
			%\varphi'(y) \ud y
			%- L^{2}
			%\int_{ L^{-1}x}^{L^{-1} (x+1)}
			%\varphi'(y - L^{-1}) \ud y
			%\\
			%& =
			%L^{2}
			%\int_{ L^{-1}x}^{L^{-1}(x+1)}
			%\varphi'(y) -	\varphi'(y - L^{-1}) \ud y\\
			%&=
			L^{2}
			\int_{ L^{-1}x}^{L^{-1}(x+1)}
			\int_{y - L^{-1}}^{y} \varphi''(z) \ud z \ud y\,.
		\end{aligned}
	\end{equation}

	\begin{enumerate}
		\item[(i)] Using \eqref{eq_supp1_nabla} and Jensen's
		      inequality, we can write
		      \begin{equation*}
			      \begin{aligned}
				      \sum_{x \in \ZZ}
				      |\nabla_x^L \varphi_{r}|^{p}
				      \leqslant
				      \sum_{x \in \ZZ}
				      L
				      \int_{ L^{-1}x}^{L^{-1}
					      (x+1)} |\varphi_{r}'(y)|^{p} \ud y
				      =
				      L
				      \int_{\RR} |\varphi'(y)|^{p}  \ud y\,,
			      \end{aligned}
		      \end{equation*}
		      where we used that $ \varphi_{r}$ is only a shifted
		      version of $ \varphi$.
		      Because $ \varphi$ is a Schwartz function, the integral
		      on the right--hand side converges.
		\item[(ii)] Applying again Jensen's inequality jointly with
		      \eqref{eq_supp1_delta} yields a bound similar to the previous
		      bullet
		      \begin{equation*}
			      \begin{aligned}
				      \sum_{x \in \ZZ}
				      \big|\Delta_x^L \varphi_{r}\big|^{p}
				      \leqslant
				      %\sum_{x \in \ZZ}
				      %L^{2}
				      %\int_{ L^{-1}x}^{L^{-1}(x+1)}
				      %\int_{y - L^{-1}}^{y} \varphi_{r}''(z)^{2} \ud z \ud y
				      %=
				      L^{2}
				      \int_{\RR} \int_{y - L^{-1}}^{y}
				      |\varphi_r''(z)|^{p}  \ud z \ud y
				      = L \int_{\RR}
				      |\varphi''(y)|^{p} \ud y \,.
			      \end{aligned}
		      \end{equation*}
		      Once more, the integral on the right--hand side is finite
		      since $ \varphi \in \mS ( \RR ) $.

		\item[(iii)]
		      We use the representation \eqref{eq_supp1_nabla}. For every
		      $ r\in[0,T]$,
		      \begin{equation*}
			      \begin{aligned}
				      \nabla_x^L\varphi_r
				      -
				      \partial_x\varphi_r(L^{-1}x)
				       & =
				      L
				      \int_{L^{-1}x}^{L^{-1}(x+1)}
				      \Big(
				      \varphi_r'(y)
				      -
				      \varphi_r'(L^{-1}x)
				      \Big)
				      \ud y\,.
			      \end{aligned}
		      \end{equation*}
		      By the mean value theorem, for
		      $ y\in[L^{-1}x,L^{-1}(x+1)]$,
		      \begin{equation*}
			      \begin{aligned}
				      \big|
				      \varphi_r'(y)
				      -
				      \varphi_r'(L^{-1}x)
				      \big|
				      \leqslant
				      L^{-1}
				      \sup_{|w-L^{-1}x|\leqslant L^{-1}}
				      |\varphi_r''(w)|\,.
			      \end{aligned}
		      \end{equation*}
		      Hence,
		      \begin{equation}\label{eq_grad_approx}
			      \begin{aligned}
				      \big|
				      \nabla_x^L\varphi_r
				      -
				      \partial_x\varphi_r(L^{-1}x)
				      \big|
				      \leqslant
				      L^{-1}
				      \sup_{|w-L^{-1}x|\leqslant L^{-1}}
				      |\varphi_r''(w)|\,.
			      \end{aligned}
		      \end{equation}
		      Therefore, for every $p\geqslant1$,
		      \begin{equation*}
			      \begin{aligned}
				      L^{-1}
				      \sum_{x\in\ZZ}
				      \big|
				      \nabla_x^L\varphi_r
				      -
				      \partial_x\varphi_r(L^{-1}x)
				      \big|^p
				      \leqslant
				      L^{-p}
				      L^{-1}
				      \sum_{x\in\ZZ}
				      \sup_{|w-L^{-1}x|\leqslant L^{-1}}
				      |\varphi_r''(w)|^p.
			      \end{aligned}
		      \end{equation*}
		      The last Riemann sum is uniformly bounded in $r\in[0,T]$
		      and $L\in\NN$, since $\varphi_r''$ is a spatial translate
		      of the Schwartz function $\varphi''$. Thus the right--hand
		      side is of order $L^{-p}$, uniformly in $r$, which proves
		      (iii).

		\item[(iv)]
		      The proof is analogous to the previous
		      bullet. We use the representation
		      \eqref{eq_supp1_delta},
		      which yields a bound in terms of $
			      \varphi'''$ which is controlled in the same
		      manner.
	\end{enumerate}
\end{proof}

% \bibliography{refs}
% \bibliographystyle{alpha}

\newcommand{\etalchar}[1]{$^{#1}$}

\end{document}